\newcommand{\R}{\mathbb{R}} 
\newcommand{\C}{\mathbb{C}}
\newcommand{\N}{\mathbb{N}}
\newcommand{\Sph}{\mathbb{S}}
\newcommand{\T}{\mathbb{T}}
\newcommand{\eps}{\varepsilon} 
\newcommand{\me}{\mathrm{e}} 
\newcommand{\mi}{\mathrm{i}}
\newcommand{\elbow}{\, \mbox{\rule[-0.1ex]{1.125ex}{0.15ex}\rule[-0.1ex]{0.15ex}{1.25ex}}\;}
\newcommand{\dif}{\mathrm{d}} 
\newcommand{\Dif}{\mathrm{D}}
\renewcommand{\Re}{\mathbf{Re}} 
\renewcommand{\Im}{\mathbf{Im}}
\renewcommand{\det}{\mathrm{det}}
\newcommand{\bPhi}{\boldsymbol{\Phi}}
\newcommand{\torus}{\Sigma_{r}}
\newtheorem*{mainthm}{Main Theorem}
\newtheorem{thm}{Theorem}
\newtheorem{lemma}[thm]{Lemma}
\newtheorem{cor}[thm]{Corollary}
\newtheorem{prop}[thm]{Proposition}
\newtheorem*{nonumcor}{Corollary}
\theoremstyle{definition}
\newtheoremstyle{rmk}{5pt}{5pt}{}{}{\scshape}{:}{.5em}{}
\theoremstyle{rmk}
\newtheorem*{rmk}{Remark}
\newcommand{\mylabel}
	{\label}
\begin{document}

\title{Hamiltonian Stationary Tori  in K\"ahler Manifolds}

\author{
\begin{minipage}{3.125in}
    \begin{center}
        Adrian Butscher \\ Stanford University \\ Department of Mathematics \\ email: \ttfamily butscher@math.stanford.edu
    \end{center}
\end{minipage}
\begin{minipage}{3.125in}
\begin{center}
        Justin Corvino \\ Lafayette College \\ Department of Mathematics \\ email: \ttfamily corvinoj@lafayette.edu
    \end{center}
\end{minipage}\\
\rule{1ex}{0ex}
}

\maketitle

\begin{abstract} 
A Hamiltonian stationary Lagrangian submanifold of a K\"ahler manifold is a Lagrangian submanifold whose volume is stationary under Hamiltonian variations.  We find a sufficient condition on the curvature of a K\"ahler manifold of real dimension four to guarantee the existence of a family of small Hamiltonian stationary Lagrangian tori.  
\end{abstract}


\renewcommand{\baselinestretch}{1.25}
\normalsize

\section{Introduction and Statement of Results}

Let $M^{2n}$ be a K\"ahler manifold with complex structure $J$, Riemannian metric $g$, and symplectic form $\omega$.  The Lagrangian submanifolds of $M$, i.e.~those $n$-dimensional submanifolds of $M$ upon which the pull-back of $\omega$ vanishes, are very natural and meaningful objects to consider  when $M$ is studied from the symplectic point of view.  To gain additional insight by studying $M$ from the metric point of view, it has been fruitful to consider those Lagrangian submanifolds of $M$ which are in some way well-adapted to the metric geometry of $M$. Indeed, it has been found that the Lagrangian submanifolds of $M$ (when $M$ is either K\"ahler-Einstein or Calabi-Yau) that are minimal with respect to the metric $g$ possess a rich mathematical structure and their study is an active area of research (see e.g.~\cite{joycelectures,schoenslagsurvey}).

The minimal and Lagrangian submanifolds of $M$ are critical points of the $n$-dimensional volume functional with respect to compactly supported variations.  It is possible to pose two other natural  variational problems amongst Lagrangian submanifolds of $M$ whose critical points are also mathematically quite interesting.  These variational problems are obtained by restricting the class of allowed variations.  First, one can demand that the volume of  $\Sigma$ is a critical point with respect to only those variations of $\Sigma$ which preserve the Lagrangian condition; in this case, $\Sigma$ is said to be \emph{Lagrangian stationary}.  Since it turns out that a \emph{smooth} Lagrangian stationary submanifold is necessarily minimal (because the mean curvature vector field of $\Sigma$ is itself the infinitesimal generator of a Lagrangian variation, as indicated in \cite{sw1}), points where a  Lagrangian stationary submanifold fails to be minimal must be singular points, and what is of interest is the precise nature of the set of singularities.  A second variational problem that one can pose is the following.  There is a natural sub-class of variations preserving the Lagrangian condition, namely the set of \emph{Hamiltonian transformations}, which are generated by functions on $M$; hence one can also demand that the volume of $\Sigma$ is a critical point with respect to only Hamiltonian variations.  In this case, $\Sigma$ is said to be \emph{Hamiltonian stationary}, and there are indeed examples of non-trivial, smooth, Hamiltonian stationary submanifolds that are not minimal.

Hamiltonian stationary submanifolds of a K\"ahler-Einstein manifold $M$ have been studied by several authors, notably Oh \cite{oh2,oh1},  Helein and Romon \cite{heleinromon2,heleinromon3,heleinromon1}, Schoen and Wolfson \cite{sw1,sw2}.  Oh initially posed the Lagrangian and Hamiltonian stationary variational problems and derived first and second variation formul\ae.  H\'elein and Romon showed that $M$ is a Hermitian symmetric space of real dimension four, this stationarity condition can be reformulated as an infinite-dimensional integrable system whose solutions possess a Weierstra\ss-type representation.  Moreover, they found all Hamiltonian stationary, doubly periodic immersions of $\R^2$ into $\C P^2$ using this representation.  Finally, Schoen and Wolfson initiated the study of Lagrangian variational problems from the geometric analysis point of view, for the purpose of constructing minimal Lagrangian submanifolds as limits of volume-minimizing sequences of Lagrangian submanifolds.  

The approach that is taken in this paper is to state a very general sufficient condition for the existence of a certain type of Hamiltonian stationary submanifold in a K\"ahler manifold $M$.  Namely, we specify a condition at a point $p$ in $M$ which allows us to construct Hamiltonian stationary tori of sufficiently small radii optimally situated in a neighbourhood of the point $p$.  Of course, a simple motivating example is $\C^n$ where one has the standard tori of any radii built with respect to any chosen unitary frame at any chosen point.  These tori will be explicitly used in our construction and will be defined carefully below.  But for a more significant example, we note that all \emph{K\"ahler toric manifolds} contain Hamiltonian stationary Lagrangian tori of the type envisaged  here.  A K\"ahler toric manifold is a closed, connected $2n$-dimensional K\"ahler manifold $(M, g, \omega, J )$ equipped with an effective Hamiltonian holomorphic action $\tau : \T^n \rightarrow \mathrm{Diff} (M)$ of the standard (real) $n$-torus $\T^n$.  The orbits of the group action turn out to be Hamiltonian stationary Lagrangian submanifolds of $M$, essentially because the metric $g$ turns out to be equivariant under the action of $\tau$.   Furthermore, we know that the image of the \emph{moment map} of $\tau$ is a convex polytope in $\R^n$.  If $\mu_\tau : M \rightarrow \R^n$ denotes the moment map and $M_0 := \mu_\tau^{-1} ( \mathit{int} (P) )$ then we know that $M_0$ is an open, dense subset of $M$ that is symplectomorphic to $\mathit{int} (P) \times \T^n$ upon which the action is free.   The orbit tori located near the corners of the polytope turn out to have small volume tending to zero at the corners themselves.  A discussion of the geometry of K\"ahler toric manifolds can be found in \cite{abreu} and the specific example of $\C P^2$ will be presented below for the sake of building intuition.

On the other hand, in a general K\"ahler manifold $M$, one might expect that smooth, small Hamiltonian stationary tori are rather rare, with a condition depending in some way on the ambient geometry of $M$ governing their existence.  The archetype for this kind of a result is an analogous construction of constant mean curvature hypersurfaces in a Riemannian manifold $M$.  Indeed, Ye has shown that it is possible to perturb a sufficiently small geodesic sphere centered at the point $p \in M$ to a hypersurface of exactly constant mean curvature, provided that $p$ is a non-degenerate critical point of the scalar curvature of $M$ \cite{ye}.  

We now explain and state the Main Theorem to be proved in this paper.  Let $ \mathbf{U}_2(M)$ denote the unitary frame bundle of $M$ and choose a point $p \in M$ and a unitary frame $\mathcal U_p \in \mathbf{U}_2(M)$ at $p$.   Let $(z^1, z^2)$ be geodesic normal complex coordinates for a neighbourhood of $p$ whose coordinate vectors at the origin coincide with $\mathcal U_p$.  Fix $r := (r_1, r_2) \in \R^2_+$, the open positive quadrant of $\R^2$, with small $\| r \| $ and define the submanifold
$$\Sigma_{r} (\mathcal U_p) :=  \big\{ \big(r_1 \me^{\mi \theta^1}, r_2 \me^{\mi \theta^2} \big ) : (\theta^1, \theta^2) \in \T^2 \big\} \, .$$
If $M$ were $\C^2$ then $\Sigma_{r}(\mathcal U_p)$ would be Hamiltonian stationary Lagrangian for all $r$ and $ \mathcal U_p $.  In general, $\Sigma_{r}( \mathcal U_p)$ is almost Hamiltonian stationary Lagrangian when $\| r \|$ is very small, as the ambient metric is nearly Euclidean in geodesic normal complex coordinates.  Next, for any section $X \in \Gamma(J(T \Sigma_r(\mathcal U_p)))$ define the deformed submanifold
$$\mu_X \big( \Sigma_{r} (\mathcal U_p) \big) :=  \big\{  \big(r_1(1 + X^1(\theta)) \me^{\mi \theta^1}, r_2 (1 + X^2(\theta)) \me^{\mi \theta^2} \big) : (\theta^1, \theta^2) \in \T^2 \big\} \, .$$

We now want to define a function on unitary frames which will be used to state the existence condition of the Main Theorem below.  First observe that the unitary group acts on $\mathbf{U}_2(M)$ by matrix multiplication in the fiber direction.  The subgroup of diagonal matrices $\mathit{Diag} \subseteq U(2)$ thus acts on $\mathbf{U}_2(M)$ as well, and we define the function $\mathcal F_r : \mathbf{U}_2 (M) / \mathit{Diag}  \rightarrow \R$ by
$$\mathcal F_r( \mathcal U_p) := r_1^2 R^{\C}_{1 \bar 1}(p) + r_2^2 R^{\C}_{2 \bar 2}(p) $$
where $R^{\C}_{1 \bar 1}$ and $R^{\C}_{2 \bar 2}$ are the components of the complex Ricci curvature computed with respect to the chosen frame at the point $p$.  Note that this makes sense since $\mathrm{Ric}^{\C} ( \me^{\mi \alpha} \frac{\partial}{\partial z^j},  \me^{\mi \alpha} \frac{\partial}{\partial z^j}) = \mathrm{Ric}^{\C} ( \frac{\partial}{\partial z^j},   \frac{\partial}{\partial z^j}) $ for all $\alpha \in \Sph^1$.  Furthermore, it is the case that $\Sigma_{r}(\mathcal U_p) = \Sigma_{r}(D \cdot \mathcal U_p)$ for all diagonal matrices $D \in \mathit{Diag}$ so that $\mathcal F_r$ depends on only the information contained in $\mathcal U_p$ that relates to $\Sigma_r(\mathcal U_p)$.

In the statement of the Main Theorem below, the norm $\| \cdot \|_{C^{k,\alpha}_w}$ is a weighted $C^{k,\alpha}$ norm with respect to $g$, defined by 
$$\| u \|_{C^{k, \alpha}_w} := \sup_{ \Sigma_r}  |u| + \|r\| \sup_{ \Sigma_r} \|\nabla u \| + \cdots + \|r\|^k \sup_{ \Sigma_r} \| \nabla^k u \| + \|r\|^{k + \alpha} \big[ \nabla^k u \big]_{\Sigma_r} $$
where $[ \cdot ]_{\Sigma_r}$ is the usual H\"older coefficient on $\Sigma_r$. In addition, we take the metric on the frame bundle to be the natural metric inherited from $g$.  

\begin{mainthm}
	Let $(M, g, \omega, J)$ be a K\"ahler manifold, with $\mathrm{dim}_{ \R}  M=4$.  Suppose $\mathcal U_p \in \mathbf{U}_2(M)$ is such that the equivalence class $[\mathcal U_p] \in \mathbf{U}_2(M) / \mathit{Diag}$ is a non-degenerate critical point of $\mathcal F_r$.  If $\| r \|$ is sufficiently small, then there exists $\mathcal U_{p'} \in \mathbf{U}_2(M) $ and a section $X \in \Gamma( J ( T \Sigma_r( \mathcal U_{p'})))$ so that the submanifold $\mu_X ( \Sigma_r( \mathcal U_{p'}))$ is smooth and Hamiltonian stationary Lagrangian.  Moreover, for any $k\in \N$ and $\alpha\in (0,1)$, we have $\|X \|_{C^{k,\alpha}_w} =\mathcal O( \|r\|^2)$, and the distance between $ \mathcal U_p$ and $ \mathcal U_{p'}$ as points in $\mathbf{U}_2 (M)$ is $\mathcal O(\|r\|^2)$.
\end{mainthm}

We note as a direct corollary that it is possible to extend the Main Theorem slightly in order to answer a more general question.  That is, the Main Theorem finds a Hamiltonian stationary submanifold that is a small perturbation of $\Sigma_r$ for $\| r \|$ sufficiently small.  Now one can ask if it is possible to find neighbouring Hamiltonian stationary Lagrangian submanifolds which are perturbations of $\Sigma_{r'}$ with $r'$ sufficiently close to $r$.  The answer to this question is that one can indeed find such submanifolds because the non-degenerate critical points of the family of functionals $\mathcal F_{r'}$ with $r'$ varying in a neighbourhood of $r$ are \emph{stable}.  That is, if $r'$ is sufficiently close to $r$ then $\mathcal F_{r'}$ has a non-degenerate critical point $[ \mathcal U_{p(r')}] $ near $[\mathcal U_p]$.  By the Implicit Function Theorem, moreover, the association $r' \mapsto [ \mathcal U_{p(r')}] $ is smooth and this can be lifted to a smooth association $r' \mapsto \mathcal U_{p(r')}$

\begin{nonumcor}
	Let $r := (r_1 , r_2) \in \R^2_+$ with $\|r \|$ sufficiently small and suppose $\mathcal U_p \in \mathbf{U}_2(M)$is such that the equivalence class $[\mathcal U_p] \in \mathbf{U}_2(M) / \mathit{Diag}$ is a non-degenerate critical point of $\mathcal F_r$.  Then one can find a small neighbourhood $\mathcal V \subset \R^2_+$ containing $r$ so that $\Sigma_{r'}(\mathcal U_p)$ can be perturbed into a Hamiltonian stationary Lagrangian submanifold of $M$ for all $r' \in \mathcal V$.  Moreover, the mapping taking $r'$ to the associated Hamiltonian stationary Lagrangian submanifold is smooth.
\end{nonumcor}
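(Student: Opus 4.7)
The plan is to combine the Main Theorem with a standard Implicit Function Theorem argument applied to the finite-dimensional problem of locating critical points of $\mathcal F_{r'}$ on $\mathbf{U}_2(M)/\mathit{Diag}$, and then to note that the nonlinear perturbation argument underlying the Main Theorem itself can be carried out with smooth dependence on $r'$.

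First I would consider the map $G(r', [\mathcal U]) := d\mathcal F_{r'}([\mathcal U])$ on $\R^2_+ \times (\mathbf{U}_2(M)/\mathit{Diag})$, which is smooth since $\mathcal F_{r'}(\mathcal U_p) = (r'_1)^2 R^{\C}_{1\bar 1}(p) + (r'_2)^2 R^{\C}_{2\bar 2}(p)$ depends polynomially on $r'$. At $(r,[\mathcal U_p])$ the partial of $G$ in the second variable is the Hessian of $\mathcal F_r$ at $[\mathcal U_p]$, which is invertible by non-degeneracy. The Implicit Function Theorem then produces a neighborhood $\mathcal V \subset \R^2_+$ of $r$ and a smooth map $r' \mapsto [\mathcal U_{p(r')}]$, with $[\mathcal U_{p(r)}] = [\mathcal U_p]$, whose values are critical points of $\mathcal F_{r'}$. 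After shrinking $\mathcal V$ these critical points remain non-degenerate by continuity of the Hessian, and by choosing a local smooth section of the principal $\mathit{Diag}$-bundle $\mathbf{U}_2(M) \to \mathbf{U}_2(M)/\mathit{Diag}$ near $[\mathcal U_p]$ this lifts to a smooth family of frames $r' \mapsto \mathcal U_{p(r')}$ in $\mathbf{U}_2(M)$ itself.

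Shrinking $\mathcal V$ once more so that $\|r'\|$ is small enough to apply the Main Theorem uniformly for $r' \in \mathcal V$, I would then invoke the Main Theorem at each $r' \in \mathcal V$ with base frame $\mathcal U_{p(r')}$. This yields a nearby frame $\mathcal U_{p'(r')} \in \mathbf{U}_2(M)$ with $\mathrm{dist}(\mathcal U_{p(r')}, \mathcal U_{p'(r')}) = \mathcal O(\|r'\|^2)$, together with a section $X(r') \in \Gamma\bigl(J(T\Sigma_{r'}(\mathcal U_{p'(r')}))\bigr)$ satisfying $\|X(r')\|_{C^{k,\alpha}_w} = \mathcal O(\|r'\|^2)$, for which the deformed torus $\mu_{X(r')}(\Sigma_{r'}(\mathcal U_{p'(r')}))$ is smooth and Hamiltonian stationary Lagrangian. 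Since $\mathcal U_{p(r')}$ itself depends smoothly on $r'$ and is close to $\mathcal U_p$, this submanifold is indeed a small perturbation of $\Sigma_{r'}(\mathcal U_p)$.

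The remaining point, and the main obstacle, is the smoothness of the association $r' \mapsto \mu_{X(r')}(\Sigma_{r'}(\mathcal U_{p'(r')}))$. Writing the Hamiltonian stationary condition schematically as $\mathcal N(r', \mathcal U, X) = 0$, the proof of the Main Theorem reduces to an Implicit Function Theorem for $\mathcal N$ in which the linearization at the solution is invertible, so once one verifies that $\mathcal N$, the Banach spaces on which it is defined, and the weighted H\"older norms $\|\cdot\|_{C^{k,\alpha}_w}$ all depend smoothly on the parameter $r'$ in a uniform way over $\mathcal V$, the same IFT extracts smooth parameter-dependence of the solution. This parameter-uniformity is the only non-trivial piece of bookkeeping; it is standard in such gluing-type constructions but must be tracked through the definitions of the underlying function spaces, which scale with $\|r'\|$.
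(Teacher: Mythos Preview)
Your proposal is correct and follows essentially the same approach as the paper's: the paper argues that non-degenerate critical points of the family $\mathcal F_{r'}$ are \emph{stable} under variation of $r'$, invokes the Implicit Function Theorem to obtain a smooth map $r' \mapsto [\mathcal U_{p(r')}]$, lifts it to $\mathbf{U}_2(M)$, and then applies the Main Theorem at each $r'$. You have in fact been more careful than the paper about the final smoothness claim, correctly noting that one must also track smooth parameter-dependence through the contraction-mapping/IFT step that produces $(\mathcal U_{p'(r')}, X(r'))$; the paper leaves this implicit.
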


The Main Theorem will be proved following broadly similar lines as the proof of Ye's result.  That is, for each $\mathcal U_p$ and sufficiently small $\|r \|$, a section $X$ will be found so that $\mu_X ( \Sigma_r( \mathcal U_{p}))$ is \emph{almost} Lagrangian and Hamiltonian stationary; in fact the small error will be arranged to lie in a certain finite-dimensional space.  The discrepancy comes from the fact that the Hamiltonian stationary differential operator possesses an approximate co-kernel (coming from translation and $U(2)$-rotation) that constitutes an obstruction to solvability.  Only when $ \Sigma_r( \mathcal U_{p})$ is very special (such that the image of the Hamiltonian stationary differential operator acting on $ \Sigma_r( \mathcal U_{p})$ is orthogonal to the associated co-kernel to lowest order in $\| r \|$) can a solution be found.  The existence condition, as indicated in the Main Theorem, is that $[\mathcal U_p]$ is a non-degenerate critical point of $\mathcal F_r$.  This condition is qualitatively similar to Ye's condition in that it involves the ambient curvature tensor of $M$.  But of course the condition here takes into account the freedom to choose the complex frame with respect to which $\Sigma_r(\mathcal U_p)$ is built as well as the point $p$ where $\Sigma_r(\mathcal U_p)$ is located.

As with Ye's condition, it is not always the case that $\mathcal F_r$ possesses non-degenerate critical points.  For example, this occurs in the case of $\C P^2$ and of  $\mathbb{C}^2$, despite the fact that both spaces contain small Hamiltonian stationary Lagrangian tori.  These examples can be seen as analogues of the situation in $\R^n$, a space which fails to satisfy the non-degeneracy criterion of Ye and where constant mean curvature spheres come in great abundance.  
It should be noted that Pacard and Xu have recently strengthened Ye's result by replacing the non-degeneracy condition appearing there with a different condition, from which they can deduce that \emph{every} compact Riemannian manifold must have at least one point $p$ for which sufficiently small geodesic spheres centered at $p$ can be perturbed to hypersurfaces of constant mean curvature \cite{pacardxu}.  A similar strengthening should be possible in the Hamiltonian stationary Lagrangian case as well.

\bigskip \noindent \scshape Acknowledgements: \upshape The authors would like to thank Richard Schoen for proposing this problem as well as Rafe Mazzeo and Frank Pacard for their interest and suggestions.  The first author was partially supported by the NSERC Discovery Grant 262453-03.  The second author was partially supported by N.S.F.~Grant DMS-0707317, and by the Fulbright Foundation; he also thanks the hospitality of the Mittag-Leffler Institute, at which part of the work was done.

\section{Geometric Preliminaries}

\subsection{K\"ahler Manifolds}
\label{sec:kaehlergeom}

A complex manifold $M$ of real dimension $2n$ and integrable complex structure $J$  is said to be \emph{K\"ahler} if it possesses a Riemannian metric $g$ for which $J$ is an isometry, as well as a symplectic form $\omega$ satisfying the compatibility condition $\omega(X, Y) = g (JX, Y)$ for all tangent vectors $X, Y$.  Standard references for K\"ahler manifolds are \cite{griffithsharris} and \cite{kobayashinomizu}.   What follows is a brief description, for the purpose of fixing terminology and notation, of those aspects of K\"ahler geometry that will be relevant for what follows.  

The question of interest is the nature of the \emph{local} geometry of a K\"ahler manifold.  Consider first the simplest example of a K\"ahler manifold: this is $\C^n$ equipped with the standard Euclidean metric $\mathring g := \Re\big(\sum_{k} \dif z^k \otimes \dif \bar z^k \big)$ and the standard symplectic form $\mathring \omega :=  - \Im\big(\sum_{k} \dif z^k \otimes \dif \bar z^k \big)$ (both given in complex coordinates), as well as the standard complex structure (which coincides with multiplication by $\sqrt{-1}$ in complex coordinates).   In a general K\"ahler manifold, it is a fact that it is always possible to find local complex coordinates for a neighbourhood $\mathcal V$ of any point $p \in M$  in which the complex structure is standard everywhere in $\mathcal V$, and the metric and symplectic form are standard at $p$ with vanishing derivatives.  In fact, more is true: the metric and symplectic form possess special structure in such a coordinate chart.  

It is possible to show that there is a function $F : \mathcal V \rightarrow \R$, called the \emph{K\"ahler potential}, so that the metric and symplectic form are: 
\begin{align*}
	g = 2\, \Re \sum_{k,l}  \left( \frac{\partial^2 F}{\partial z^k \partial \bar z^l} \dif z^k \otimes \dif \bar z^l \right) &= \frac{1}{2}\sum_{k,l}  \left(\frac{\partial^2 F}{\partial x^k \partial x^l} +  \frac{\partial^2 F}{\partial y^k \partial y^l}\right) \! \big( \dif x^k \otimes \dif x^l + \dif y^k \otimes \dif y^l \big) \\
	&\qquad + \frac{1}{2}\sum_{k,l}  \left(\frac{\partial^2 F}{\partial y^k \partial x^l} -  \frac{\partial^2 F}{\partial x^k \partial y^l}\right) \! \big( \dif y^k \otimes \dif x^l - \dif x^k \otimes \dif y^l \big) \\[1ex]
	\omega = -2 \, \Im \sum_{k,l} \left( \frac{\partial^2 F}{\partial z^k \partial \bar z^l} \dif z^k \otimes \dif \bar z^l \right) &= \frac{1}{2}\sum_{k,l}  \left(\frac{\partial^2 F}{\partial x^k \partial x^l} +  \frac{\partial^2 F}{\partial y^k \partial y^l}\right) \! \big(  \dif x^k \otimes \dif y^l - \dif y^k \otimes \dif x^l \big) \\
	&\qquad + \frac{1}{2}\sum_{k,l}  \left(\frac{\partial^2 F}{\partial y^k \partial x^l} -  \frac{\partial^2 F}{\partial x^k \partial y^l}\right) \! \big( \dif x^k \otimes \dif x^l + \dif y^k \otimes \dif y^l \big) \, ,
\end{align*} 
in the local complex coordinates $(z^1, \ldots, z^n)$ or local real coordinates $(x^1, \ldots, x^n, y^1, \ldots, y^n)$ for $\mathcal V$, which are related by $z^k = x^k + \mi y^k$.  Note that
$$ \omega =\frac{1}{2} \sum_k \dif \! \left(  \frac{\partial F}{\partial x^k } \dif y^k - \frac{\partial F}{\partial y^k } \dif x^k \right),$$
which is consistent with the fact that $\dif \omega = 0$, and locally, closed forms are exact.  Write $\omega := \dif \alpha$, where $\alpha$ is called the Liouville form of $\omega$, and write $\mathring \alpha := \frac{1}{2} \sum_k\big( x^k \dif y^k - y^k \dif x^k \big)$ for the Liouville form of the standard symplectic form.  Note also that the K\"ahler potential is unique up to the addition of a function $\varphi$ satisfying $\partial_{z^k} \partial_{\bar z^l} \varphi = 0$ for all $k, l$.  One can additionally show that it is possible to choose $F$ near the origin having the form 
$$F(z,\bar z) := \frac{1}{2} \|z \|^2 + \hat F(z, \bar z) $$
where $\hat F$ vanishes at least to order four in $z$ and $\bar z$.  Hence $\partial_{z^k} \partial_{\bar z^l} F = \delta_{kl} + \mathcal O(\| z \|^2)$.  Consequently, the K\"ahler structures near the origin are perturbations of the standard structures $\mathring g$ and $\mathring \omega$, whose K\"ahler potential is $\mathring{F}(z, \bar{z}) := \frac{1}{2} \| z \|^2$.

The complexified curvature tensor of a K\"ahler manifold in local coordinates in $\mathcal V$  can be expressed in terms of the K\"ahler potential.  Namely, the complexified curvature tensor satisfies
$$R^{\C}_{k \bar l m \bar n} = \frac{\partial^4 \hat F}{\partial z^k \partial \bar z^l \partial z^m \partial \bar z^m} - \sum\limits_{u,v}g^{\bar u  v} \frac{\partial^3 \hat F}{\partial z^k \partial \bar z^u \partial z^m} \frac{\partial^3 \hat F}{\partial \bar z^l \partial z^v \partial \bar z^n} \, .$$ 
Since $\partial^3 F(0) = 0$, then we have
\begin{equation}
	\label{eqn:cxcurvature}
	R^{\C}_{k \bar l m \bar n}(p) = \frac{\partial^4 \hat F(0)}{\partial z^k \partial \bar z^l \partial z^m \partial \bar z^m}  \, .
\end{equation}

\subsection{Hamiltonian Stationary Lagrangian Submanifolds}

Interesting submanifolds of a K\"ahler manifold can be characterized by the effect of the action of $J$ on tangent spaces.  For instance, a \emph{complex} submanifold of $M^{2n}$ is one whose tangent spaces are invariant under $J$.  Two classes of submanifolds of importance in this paper are defined in terms of a complementary condition to that of a complex submanifold.  An $n$-dimensional submanifold $\Sigma$ is called \emph{Lagrangian} if $J (T_p \Sigma)$ is orthogonal to $T_p\Sigma$ for each $p \in \Sigma$.  Hence a Lagrangian submanifold satisfies $\omega(X, Y) = 0$ for all $X, Y \in T_p \Sigma$ and $p \in \Sigma$.  More generally, an $n$-dimensional submanifold $\Sigma$ for which $J (T_p \Sigma)$ is transverse to $T_p \Sigma$ for each $p \in \Sigma$ is called \emph{totally real}.

We will be interested in diffeomorphisms of $M$ that preserve some or all aspects of its K\"ahler structure.   The diffeomorphisms which preserve the full K\"ahler structure are the \emph{holomorphic isometries} and are quite rare in general.  In $\C^n$, though, there are holomorphic isometries: these are the $U(n)$-rotations.  The diffeomorphisms which preserve the symplectic form but not necessarily the metric are called \emph{symplectomorphisms}.  Every K\"ahler manifold possesses symplectomorphisms; indeed, for each function $u: M \rightarrow \R$ the one-parameter family of diffeomorphisms obtained by integrating the vector field $X$ defined by $X \elbow \omega := \dif u$ are symplectomorphisms.  These diffeomorphisms are called \emph{Hamiltonian}.  The condition of being totally real or Lagrangian is preserved by symplectomorphisms.

Consider now a Lagrangian submanifold $\Sigma \subset M$.  If $\Sigma$ is a critical point of the $n$-dimensional volume functional amongst \emph{all} possible compactly supported variations, then $\Sigma$ is \emph{minimal}, in which case the mean curvature vector $\vec H_\Sigma$ of $\Sigma$ vanishes.  Suppose, however, that $\Sigma$ is merely a critical point of the $n$-dimensional volume amongst only Hamiltonian variations, and thus is Hamiltonian stationary Lagrangian.  By computing the Euler-Lagrange equations for $\Sigma$, it becomes clear that being Hamiltonian stationary is in general a strictly weaker condition than being minimal.  Indeed, let $\phi_t$ be a one-parameter family of Hamiltonian diffeomorphisms of $M$ with infinitesimal deformation vector field $X$ satisfying $X \elbow \omega = \dif u$ for $u : M \rightarrow \R$.  Then 
\begin{align}
	\label{eqn:firstvar}
	0 = \left. \frac{\dif}{\dif t} \mathit{Vol} \big( \phi_t(\Sigma) \big) \right|_{t=0} &= - \int_{\Sigma} g ( \vec H_\Sigma, X) \, \dif \mathrm{Vol}_{\Sigma} \notag \\
	&= - \int_{\Sigma} \omega ( X, J \vec H_\Sigma) \, \dif \mathrm{Vol}_{\Sigma} \notag \\
	&= - \int_{\Sigma} g ( \bar \nabla u , J \vec H_\Sigma) \, \dif \mathrm{Vol}_{\Sigma} \notag \\
	&= \int_{\Sigma} u \, \nabla \! \cdot \! \big( J \vec H_\Sigma  \big) \, \dif \mathrm{Vol}_{\Sigma}
\end{align} 
by Stokes' Theorem.  Here $\bar \nabla$ is the connection associated with the ambient metric $g$ while $\nabla$ is the induced connection of $\Sigma$, and $\nabla \cdot$ is the divergence operator.  Since \eqref{eqn:firstvar} must hold for all functions $u$, it must be the case that the mean curvature of $\Sigma$ satisfies
\begin{equation}
	\label{eqn:hamstateqn}
	\nabla \! \cdot \! \big( J \vec H_\Sigma  \big) = 0 \, .
\end{equation}
Equation \eqref{eqn:hamstateqn} will be solved in this paper to find Hamiltonian stationary Lagrangian submanifolds.

Observe that since $\Sigma$ is Lagrangian and $\vec H_\Sigma$ is normal to $\Sigma$, then $J \vec H_\Sigma$ is tangent to $\Sigma$ and taking its divergence with respect to the induced connection makes sense.  It is convenient to introduce some notation at this point so that the mean curvature (and second fundamental form) of a totally real submanifold can be treated in a similar manner.  To this end, let $\Sigma$ be totally real and define the \emph{symplectic second fundamental form} and the \emph{symplectic mean curvature} of $\Sigma$ by the formul\ae\
$$B(X, Y, Z) := \omega \big( ( \nabla_X Y )^\perp, Z \big) \qquad \mbox{and} \qquad H(Z) := \mathrm{Trace} \big( B( \cdot, \cdot, Z) \big) $$
where $X^\perp$ denotes the orthogonal projection of a vector $X$ defined at a point $p \in \Sigma$ to the normal bundle of $\Sigma$ at $p$.  The symplectic mean curvature is thus a one-form on $\Sigma$ and equation \eqref{eqn:hamstateqn} becomes $\nabla \cdot H = 0$, where again $\nabla \cdot$ is the divergence operator. 

\begin{rmk}
The following observation about the symplectic second fundamental form is important.  If $\Sigma$ is Lagrangian then $B(X, Y, Z) = \omega (\nabla_X Y , Z)$ for all vector fields $X, Y, Z$ tangent to $\Sigma$ since $\omega ( (\nabla_X Y)^\| , Z ) = 0$.  Hence we have the usual symmetry $B(X, Y, Z) = B(Y, X, Z)$.  In addition, we have $B(X, Z, Y) = g (J \nabla_X Z , Y) = g (\nabla_X J Z , Y) = - g (JZ, \nabla_X Y) = g( J \nabla_X Y, Z) = B(X, Y, Z)$.  Consequently the symplectic fundamental form of a Lagrangian submanifold is fully symmetric in all of its slots.
\end{rmk}

\subsection{Hamiltonian Stationary Lagrangian Submanifolds in $\C P^2$}

\label{sec:example}

We now discuss a simple example demonstrating that the K\"ahler manifold $\C P^2$, equipped with the Fubini-Study metric, contains a two-parameter family of Hamiltonian stationary Lagrangian tori that are not minimal; and that there are members of this family with arbitrary small radii.  Therefore $\C P^2$ contains Hamiltonian stationary Lagrangian submanifolds of the type we intend to construct in this paper.  As mentioned in the introduction, the existence of these tori is expected because $\C P^2$ is a toric K\"ahler manifold. 

The family of tori in question will be obtained by projecting a family of three-dimensional tori in $\Sph^5$ to $\C P^2$ using the Hopf projection.  These are found by choosing three positive real numbers $r_1$, $r_2$ and $r_3$ satisfying $r_1^2 + r_2^2 + r_3^2 = 1$, and then setting
$$T_r := \big\{ \big( r_1 \me^{\mi \theta^1}  , r_2 \me^{\mi \theta^2}  , r_3 \me^{\mi \theta^3}  \big) : \theta^k \in \Sph^1 \big\} \, .$$
Here we denote $r := (r_1, r_2, r_3)$.  Notice that $T_r$ is foliated by the Hopf fibration: the fiber through the point $p := ( r_1 \me^{\mi \theta^1}  , r_2 \me^{\mi \theta^2}  , r_3 \me^{\mi \theta^3} )$ is $\{ \me^{\mi \alpha} p : \alpha \in \Sph^1 \}$ which is clearly a subset of  $T_r$.  Moreover, this foliation is regular and thus $\Sigma_r  := \pi_{\mathit{Hopf}} ( T_r )$ is a two-dimensional submanifold of $\C P^2$, where $\pi_{\mathit{Hopf}}:\Sph^5 \rightarrow \C P^2$ is the Hopf projection.  Furthermore, it is clear that $\Sigma_r $ is a torus.

The torus $\Sigma_r $ is Hamiltonian stationary for the following reasons.   First, recall the relationship between the symplectic form $\omega$ of $\C P^2$ and the K\"ahler structure of $\C^3$.  That is, if $V_1$ and $V_2$ are two tangent vectors of  $\C P^2$, then $\omega ( V_1, V_2) := \Re \big( \mathring g ( \mi \hat V_1 , \hat V_2) \big)$ where $\mathring g$ is the Euclidean metric of $\C^3$ and $\hat V_i$ is the unique vector in $(\pi_{\mathit{Hopf}})_\ast ^{-1} (V_i)$ that is orthogonal to the Hopf fiber.  It follows that $\Sigma_r $ is Lagrangian because if $V_i$ is tangent to $\Sigma_r $ then 
$$\hat V_i \in \mathit{span}_{\R} \left\{ \mi z^1 \frac{\partial }{\partial z^1} \, , \,  \mi z^2 \frac{\partial }{\partial z^2} \, , \,  \mi z^3 \frac{\partial }{\partial z^3} \right\}$$
and it is clear that $\Re ( \mathring g ( \mi X, Y ) ) = 0$ for all vectors $X, Y$ belonging to this space.  Next, determining if $\Sigma_r $ is Hamiltonian stationary requires computing its second fundamental form.  Now because $\Sigma_r $ is Lagrangian, it can be lifted to a Legendrian submanifold $\hat \Sigma_r \subseteq T_r$ of $\Sph^5$ and this lifting is a local isometry.  Furthermore, the second fundamental form of $\hat \Sigma_r $  coincides with the second fundamental form of $\Sigma_r$.  Therefore it suffices to compute the second fundamental form of $\hat \Sigma_r $, which is a slightly simpler task and is done as follows.  We can locally parametrize  $\hat v$ by 
$$\mathcal A : (\alpha^1, \alpha^2 ) \mapsto \big( r_1 \me^{\mi L^1(\alpha)}  , r_2 \me^{\mi L^2(\alpha)}  , r_3 \me^{\mi L^3(\alpha)}  \big)$$
where $L^k (\alpha) := \sum_s L^k_s \alpha^s$ is a linear function of $\alpha:= (\alpha^1, \alpha^2)$ chosen so that the tangent vectors $V_s := \mathcal A_\ast \big( \frac{\partial}{\partial \alpha^s} \big)$ are linearly independent and $\sum_{k=1}^3 r_k^2 L^k_s = 0$ for $s = 1, 2$.  This latter condition says that each $V_s$ is orthogonal to the Hopf vector field.  Furthermore, one can check that any other choice of linear functions satisfying the aforementioned constraints amounts to a reparametrization of $\hat \Sigma_r$.   The induced metric of the parametrization is
$$\mathring h := \sum_{s, t} \Re \big( \mathring g (\hat V_s, \hat V_t) \big) \dif \alpha^s \otimes \dif \alpha^t = \sum_{s, t} \left( \sum_{k=1}^3 r_k^2 L^k_s L^k_t \right) \dif \alpha^s \otimes \dif \alpha^t$$
which is a flat metric.  The second fundamental form of this parametrization can be deduced from
$$\Re \big( \mathring g( \mathring \nabla_{V_s} V_t , \mi V_u )\big) = \sum_k r_k^2 L^k_s L^k_t L^k_u$$
where $\mathring \nabla$ is the Euclidean connection, which shows in particular that the second fundamental form is parallel with respect to the induced metric.  Hence its divergence is zero.  Consequently $\Sigma_r $ is Hamiltonian stationary but not minimal.

Finally we would like to know the geometric dimensions of $\Sigma_r $ in $\C P^2$.  Since we know the induced metric of $\Sigma_r $, this amounts to finding the size of the smallest domain in $\hat \Sigma_r $ that maps bijectively onto $\Sigma_r $ under $\pi_{\mathit{Hopf}}$.  After some work, we find that this domain is the parallelogram in the $(\alpha^1, \alpha^2)$-coordinates spanned by the vectors
$$E_k := \sum_{s, t} \mathring h^{st} \, \Re \left(  \mathring g \left( \mi z^k \frac{\partial }{\partial z^k}, \hat V_t \right) \right) \frac{\partial}{\partial \alpha^s} \qquad k = 1, 2 \, .$$
One can check that the volume of this parallelogram with respect to the induced metric $\mathring h$ is given by $r_1 r_2 \sqrt{1- r_1^2 - r_2^2}$.  Hence one can consider $\Sigma_r $ to be small when $r_1$ or $r_2$ tends to zero.

\section{Constructing the Approximate Solution}
\label{sec:con}

Let us assume in this paper from now on that the real dimension of the ambient manifold is four and thus that the dimension of the Hamiltonian stationary Lagrangian submanifold is two, since this simplifies the presentation of the results and their proofs.  We expect that most of the forthcoming calculations should generalize to higher dimensions and similar results will hold. 

\subsection{Rescaling the Ambient Manifold}
\label{sec:scale}

Choose a point $p \in M$ and find local complex coordinates so that a small neighbourhood $\mathcal V$ of $p$ maps to a small neighbourhood $\mathcal V_0$ of the origin in $\C^2$.  Moreover, let these coordinates be such that the metric and symplectic form are of the type discussed in Section \ref{sec:kaehlergeom}.   Assume that the diameter of this neighbourhood is $\rho_0 \in (0,1)$; let $r = (r_1, r_2)$, with $\| r \| < \rho_0$, be the radii of the Hamiltonian stationary Lagrangian torus we intend to construct, and set $\rho := \| r \|$.  Now change coordinates in this neighbourhood and also re-scale the metric and symplectic form via 
\begin{equation}
	\label{eqn:rescale}
	z \stackrel{\varphi}{\mapsto} \rho z \qquad \mbox{and} \qquad g \mapsto \rho^{-2} \varphi^* g \qquad \mbox{and} \qquad \omega \mapsto  \rho^{-2} \varphi^*\omega \, .
\end{equation}
As a result, we obtain a new K\"ahler metric on a large neighbourhood $\| r \|^{-1} \mathcal V_0$ of the origin in $\C^2$, where the complex structure is standard and the K\"ahler potential is 
$$F_{\rho}(z, \bar z) := \frac{1}{2} \| z \|^2 + \rho^2 \hat F_\rho (z, \bar z)$$
with $\hat F_\rho(z, \bar z) := \rho^{-4} \hat F(\rho z, \rho \bar z )$.  Furthermore, the Hamiltonian stationary Lagrangian condition is unchanged under this re-scaling and the torus $\frac{1}{\rho} \Sigma_r$ has radii $(r_1, r_2)$ satisfying $r_1^2 + r_2^2 = 1$.  Therefore, in order to construct a Hamiltonian stationary Lagrangian torus of small radii near $p$, it is sufficient to construct a Hamiltonian stationary Lagrangian torus with unit radius vector near the origin in $\C^2$ with K\"ahler potential $F_\rho$, but to take $\rho$ sufficiently small. Finally, the weighted $C^{k,\alpha}$ norm used in the statement of the Main Theorem is equivalent to the standard $C^{k,\alpha}$ norm under the re-scaling.  

\begin{rmk}
The advantage of working with these scaled coordinates is that it is now possible to express the deviation of the background geometry from Euclidean space very efficiently using the parameter $\rho$. In particular, $\hat F_\rho$ can be expanded in a power series in $z$ and $\bar z$ starting at order four that has coefficients depending on $\rho$ but bounded uniformly by a constant of size  $\mathcal O(\rho^2)$.  
\end{rmk}

\subsection{The Approximate Solution}

Let $ \mathbf{U}_2(M)$ denote the unitary frame bundle of $M$ and choose a point $p \in M$ and a unitary frame $\mathcal U_p \in \mathbf{U}_2(M)$ at $p$.   Let $(z^1, z^2)$ be geodesic normal complex coordinates for a neighbourhood of $p$ whose coordinate vectors at the origin coincide with $\mathcal U_p$.  Now let $r := (r_1, r_2)$ be some fixed vector belonging to $\R_+^2$, the open positive quadrant of $\R^2$, with $\| r \| = 1$.  Define the $2$-dimensional submanifold of $\C^2$ given by
$$\Sigma_r (\mathcal U_p) := \big\{ \big(r_1 \me^{\mi \theta^1}, r_2 \me^{\mi \theta^2} \big ) : (\theta^1, \theta^2) \in \T^2 \big\} \, .$$
Note that $\Sigma_r (\mathcal U_p)$ is the image of the $\T^n$ under the embedding  $\mu_0 : (\theta^1, \theta^2 ) \mapsto \big( r_1 \me^{\mi \theta^1}, r_2 \me^{\mi \theta^2} \big)$.   We will denote $\Sigma_r := \Sigma_r(\mathcal U_\rho)$ when it is not necessary to speak explicitly of the frame $\mathcal U_p$ from which $\Sigma_r(\mathcal U_p)$ is built.  

The following result motivates the use of $\Sigma_r$ as an \emph{approximate solution} of the problem of finding Hamiltonian stationary Lagrangian submanifolds in arbitrary K\"ahler manifolds.

\begin{lemma}
	\label{lemma:torus}
	The submanifold $\Sigma_r$ is Hamiltonian stationary Lagrangian with respect to the standard K\"ahler structure $(\mathring g, \mathring \omega, J)$ of $\C^2$.   In fact, the symplectic second fundamental form $\mathring B$ and the symplectic mean curvature $\mathring H$ are parallel.  
\end{lemma}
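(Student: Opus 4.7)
The plan is to exploit the extreme symmetry of the parametrization $\mu_0(\theta) = (r_1\me^{\mi\theta^1}, r_2\me^{\mi\theta^2})$ and reduce everything to a direct calculation in the flat coordinates $(\theta^1, \theta^2)$.

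First I would compute the coordinate frame $V_k := (\mu_0)_\ast \partial_{\theta^k} = \mi r_k \me^{\mi\theta^k}\partial_{z^k}$ (no sum) and its image under $J$, namely $JV_k = -r_k\me^{\mi\theta^k}\partial_{z^k}$. Since $V_1$ and $JV_2$ live in orthogonal complex coordinate planes with respect to $\mathring g$, one reads off $\mathring\omega(V_1, V_2) = \mathring g(JV_1, V_2) = 0$, so $\Sigma_r$ is Lagrangian. In particular $JV_k$ is a section of the normal bundle. The induced metric is $\mathring h = r_1^2\,\dif\theta^1\otimes\dif\theta^1 + r_2^2\,\dif\theta^2\otimes\dif\theta^2$, which is flat, and the basis one-forms $\dif\theta^k$ are parallel with respect to its Levi-Civita connection.

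Next I would compute the ambient covariant derivatives $\mathring\nabla_{V_k} V_l$. Since $\mathring\nabla$ is the Euclidean connection, this is just componentwise differentiation in $\theta^k$, and one finds
\begin{equation*}
\mathring\nabla_{V_k} V_l \;=\; \delta_{kl}\,JV_k \qquad (\text{no sum}).
\end{equation*}
Because $JV_k$ is already normal, the normal projection is automatic, so plugging into the definition of the symplectic second fundamental form gives
\begin{equation*}
\mathring B(V_k, V_l, V_m) \;=\; \mathring\omega(\delta_{kl}\,JV_k, V_m) \;=\; -\delta_{kl}\,\mathring g(V_k, V_m) \;=\; -\delta_{kl}\delta_{km}\,r_k^2,
\end{equation*}
which is manifestly fully symmetric (as the Remark predicts) and may be written tensorially as
\begin{equation*}
\mathring B \;=\; -\sum_{k=1}^{2} r_k^2\,(\dif\theta^k)^{\otimes 3}.
\end{equation*}
Tracing with $\mathring h^{-1} = r_1^{-2}\,\partial_{\theta^1}\otimes\partial_{\theta^1} + r_2^{-2}\,\partial_{\theta^2}\otimes\partial_{\theta^2}$ collapses the factors of $r_k^2$ and yields
\begin{equation*}
\mathring H \;=\; -\dif\theta^1 - \dif\theta^2.
\end{equation*}

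Finally, both $\mathring B$ and $\mathring H$ are expressed as constant-coefficient combinations of the parallel one-forms $\dif\theta^k$ on the flat torus $(\T^2, \mathring h)$, so $\nabla\mathring B = 0$ and $\nabla\mathring H = 0$. In particular $\nabla\!\cdot\!\mathring H = \mathrm{Tr}\,(\nabla\mathring H) = 0$, which is exactly equation \eqref{eqn:hamstateqn}, so $\Sigma_r$ is Hamiltonian stationary. There is no real obstacle here; the only thing to watch is the sign convention in $\mathring\omega = \mathring g(J\cdot,\cdot)$ and the fact that the Lagrangian condition is what lets us drop the tangential projection from $\mathring\nabla_{V_k} V_l$. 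The clean outcome is a consequence of having chosen a parametrization in which the induced metric is flat and the second fundamental form is block-diagonal with each block supported on a single Hopf-like circle factor.
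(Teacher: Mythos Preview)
Your proof is correct and follows essentially the same route as the paper: compute the coordinate frame, verify the Lagrangian condition, read off the flat induced metric, compute $\mathring\nabla_{V_k}V_l=\delta_{kl}JV_k$, and observe that the resulting symplectic second fundamental form and mean curvature have constant components in the parallel coframe $\dif\theta^k$, hence are parallel. The only difference is a sign: you obtain $\mathring B_{klm}=-r_k^2\delta_{kl}\delta_{km}$ and $\mathring H_j=-1$, whereas the paper records $+r_m^2$ and $+1$; your sign is the one consistent with the stated convention $\omega(X,Y)=g(JX,Y)$ (the paper's display appears to drop a factor of $-\mi$ when inserting $\bar E_j$), and in any case the sign is irrelevant to the lemma.
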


\begin{proof}
	We include this standard calculation for the convenience of the reader.  To begin, the tangent vectors of $\Sigma_r$ can be found by differentiating in $\theta$.  In complex notation, these are $E_k :=   \mi r_k    \me^{\mi \theta^k} \frac{\partial}{\partial z^k}$, for $k=1, 2$.  From this we can immediately compute the components of the induced metric $\mathring h$ and those of $\mathring \omega$ restricted to $\Sigma_r$.  Indeed, since the K\"{a}hler potential is $\mathring{F}(z,\bar{z})=\frac{1}{2}\|z\|^2$, we can read off the induced metric and pullback of the symplectic form as the real and imaginary parts, respectively, of 
\begin{equation*}
	 \sum_s dz^s\otimes d\bar{z}^s (E_k, \bar{E}_l) =    \sum_s r_k r_l \mi \me^{ \mi \theta^k}  \delta_{sk} ( - \mi \me^{-\mi \theta^l} \delta_{sl} ) =  r_k^2 \delta_{kl}\notag .
\end{equation*}
Thus $\mathring{\omega}$ vanishes on $\Sigma_r$, and so $\Sigma_r$ is Lagrangian.  The induced metric is given by $\mathring h_{kl}= r_k^2 \delta_{kl}$.  

Let the ambient connection be $\bar{\nabla}$ (the bar does not denote complex conjugation here).  The covariant derivatives of the tangent vector fields of the embedding with respect to $\mathring g$ in complex notation, are
\begin{equation*}
	\bar \nabla_{E_k} E_l = \frac{\partial}{\partial \theta^k}( \mi r_l  \me^{\mi \theta^l} )\frac{\partial}{\partial z^l}= -r_l \delta_{kl} \me^{\mi \theta^l} \frac{\partial}{\partial z^l} =\delta_{kl}J E_l \, .
\end{equation*}
Since $\Sigma_r$ is Lagrangian, we therefore see that the parallel part $( \bar\nabla_{E_k} E_l )^\|$ vanishes.  
We can now compute the symplectic second fundamental form.  That is, 
\begin{align*}
	\mathring B_{klj} &= \mathring \omega(\bar \nabla_{E_k} E_l - (\bar\nabla_{E_k} E_l)^\| , E_j) 
	= \mathring \omega(\bar \nabla_{E_k} E_l, E_j) \\
	&= -\Im \sum_s dz^s\otimes d\bar{z}^s \Big(\bar \nabla_{E_k} E_l, E_j)  \\ 
	&= -\Im \sum_s dz^s\otimes d\bar{z}^s \Big(-r_l \delta_{kl} \me^{\mi \theta^l} \frac{\partial}{\partial z^l} ,  r_j \me^{-\mi \theta^j} \frac{\partial}{\partial \bar{z}^j} \Big) \\
	&=  r_m^2 \delta_{km} \delta_{lm} \delta_{jm}, 
\end{align*} 
where $m$ can be any of $k$, $l$ or $j$. This emphasizes the symmetry of $\mathring B$ in its indices, as proved more generally above.  From here we see $\mathring H_j = \mathring h^{kl} \mathring{B}_{klj}= 1$ for each $j$.
\end{proof}

\begin{rmk} 
Note that the previous line shows that these Hamiltonian stationary Lagrangian tori are \emph{not minimal}. \end{rmk}

Lemma \ref{lemma:torus} suggests that we should choose a point $p \in M$, find local complex coordinates in a neighbourhood $\mathcal V$ of $p$ as in Section \ref{sec:kaehlergeom}, scale these coordinates by a factor $\rho$ as above.  Then if we embed the submanifold $\Sigma_r $ into the coordinate image of $\mathcal V$, then it remains the case that $\torus$ is Hamiltonian stationary Lagrangian with respect to the standard K\"ahler structure but it is no longer necessarily so with respect to the K\"ahler structure $(g, \omega, J)$ with K\"ahler potential $F_\rho$.   However, if $\rho$ is sufficiently small, then $\Sigma_r$ is totally real; moreover, it is close, in a sense that will be made more precise later on, to being Hamiltonian stationary Lagrangian.

\subsection{The Equations to Solve}

An exactly Hamiltonian stationary Lagrangian submanifold with respect to the K\"ahler structure $(g, \omega, J)$ near the submanifold $\torus$ when $\rho$ is sufficiently small will be found by perturbing $\torus$ appropriately.  This will be done by first defining a class of deformations of $\torus$ and then selecting the appropriate deformation by solving a differential equation.  Define these deformations as follows.  For every function $X : \T^2 \rightarrow \R^2$ of suitably small norm, define an embedding $\mu_X : \T^2 \hookrightarrow \C^2$ by 
\begin{equation*}
	\mu_X : (\theta^1, \theta^2)  \longmapsto \big( r_1(1+ X^1(\theta)) \me^{\mi \theta^1} , r_2 (1+ X^2(\theta)) \me^{\mi \theta^2} \big) \, .
\end{equation*}
Note that the Euclidean-normal bundle of $\torus$ coincides with the bundle $J (T \torus)$ and is spanned by the Euclidean-orthonormal vector fields $N_k :=  \me^{\mi \theta^k} \frac{\partial}{\partial z^k} $ for $k = 1, 2$.  Thus a geometric interpretation of this embedding is to view $X$ as a section of the bundle $J (T \torus)$ and $\mu_X$ as the Euclidean-exponential map scaled by the radii $r_1, r_2$ in the different coordinate directions.   We employ the slight abuse of notation $\mu_X (\torus):=\mu_X(\T^2)$.

Finding $X \in \Gamma( J (T\torus))$ so that $\mu_X (\torus)$ is Hamiltonian stationary Lagrangian with respect to the K\"ahler structure $(g, \omega, J)$ amounts to solving two equations:
\begin{equation}
	\label{eqn:pde}
	\begin{gathered}
		\mu_X^\ast \, \omega = 0 \\
		\nabla \cdot H (\mu_X(\torus)) = 0 
	\end{gathered}
\end{equation}
where $H(\torus)$ is the symplectic mean curvature of $\torus$.  Thus one should consider the differential operator $\Phi_\rho : \Gamma(J(T \torus)) \rightarrow \Lambda^2(\torus) \times \Lambda^0(\torus)$ given by
\begin{equation*}
	\Phi_\rho (X) := \big( \mu_X^\ast \, \omega, \nabla \cdot H (\mu_X(\torus) \big)
\end{equation*}
and attempt to solve the equation $\Phi_\rho(X) = (0,0)$.  Note that the first of these equations is first-order in the vector field $X$ while the second equation is third-order in $X$.   Since $\torus$ is generally \emph{not} Hamiltonian stationary nor Lagrangian with respect to the K\"ahler structure $(g, \omega, J)$ when $\rho > 0$, then $\Phi_\rho(0)$ is a non-vanishing tensor field on $\torus$ depending continuously on $\rho$ in some way that will be determined in the sequel.  Certainly, however, one can assert that $\Phi_{0} (0) = (0,0)$. 

It turns out that, as it stands, equation \eqref{eqn:pde} does not represent a strictly elliptic problem.  A few refinements are necessary in order to achieve this.  First, an important observation to make is that the operator $\Phi_\rho$ maps onto a much smaller space.  In fact, it is true that the first component of $\Phi_\rho(X)$ belongs to $\dif \Lambda^1(\torus)$, the set of exact one-forms, which can be seen as follows. Observe that $\mu_X^\ast \, \omega$  is closed and belongs to the same cohomology class as $\mu_{t X}^\ast \, \omega$ for all $t \in [0,1]$.  But $\mu_0^\ast \, \omega = \dif \alpha \big|_{\Sigma_r}$ where $\alpha$ is the Liouville form, so that $\mu_0^\ast \, \omega$ is exact.  Therefore  $\mu_X^\ast \, \omega$ is exact as well.  The second factor of $\Phi_\rho(X)$ is a divergence; hence its integral against the volume form of $\mu_X(\Sigma_r)$ must vanish. 

Next, we make an \emph{Ansatz} for the section $X$ of the bundle $J(T\torus)$.  We write $X:=  X^k J E_k$ where $E_k := \mi r_k \me^{\mi \theta^k } \frac{\partial}{\partial z^k}$ are the coordinate basis vectors of the tangent space $T\Sigma_r$, and motivated by the Hodge decomposition, we split $X$ into a gradient and a curl component with respect to the metric induced on $\Sigma_r$ by the Euclidean ambient metric.  More specifically, we choose $X := \mathcal X(u, v)$ so that $X \elbow \omega \big|_{\torus} = \dif v + \mathring \star \, \dif u$ for functions $u, v: \torus \rightarrow \R$, where $\mathring \star$ is the Hodge star operator of $\torus$ with respect to the Euclidean metric.  By inspection, this outcome is achieved by the vector field\begin{equation}
	\label{eqn:ansatz}
	\mathcal X(u, v)  :=  \sum_{k} \frac{1}{r_k^2}  \Bigg( \frac{\partial v}{\partial \theta^k} + \sum_j \eps_{k}^j \frac{\partial u}{\partial \theta^j} \Bigg) r_k \me^{\mi \theta^k} \frac{\partial}{\partial z^k}
\end{equation}
where $\eps_{k}^j$ satisfies $\eps_{1}^1 = \eps_{2}^2 = 0$ and $\eps_{1}^2 = -r_1/ r_2$ and $ \eps_{2}^1 = r_2/r_1$.   Note that the mapping given by $(u, v) \mapsto \mathcal X(u, v)$ is linear in $(u, v)$ and independent of $\rho$

Using the \emph{Ansatz} above, one can re-formulate \eqref{eqn:pde} as a pair of equations for the functions $u$ and $v$ which will turn out to be elliptic.  Since \eqref{eqn:pde} is mixed a first- and third-order partial differential equation and $\mathcal X(u, v)$ takes one additional derivative, the functions $u$ and $v$ will be assumed to lie in $C^{4, \alpha}$.  Moreover, since $\mathcal X(u, v)$ clearly remains unchanged if a constant is added to either $u$ or $v$, we impose the normalization 
$$\int_{\torus} u \, \dif \mathrm{Vol}_{\Sigma_r}^\circ = \int_{\torus} v \, \dif \mathrm{Vol}_{\Sigma_r}^\circ  = 0$$ 
where $\dif \mathrm{Vol}_{\Sigma_r}^\circ $ is the volume form of $\Sigma_r$ with respect to the metric induced on $\Sigma_r$ by the ambient Euclidean metric.   Therefore define a new differential operator by 
\begin{gather*}
	\bPhi_\rho : C_0^{4,\alpha}(\torus) \times C_0^{4, \alpha}(\torus) \rightarrow  C^{2,\alpha}(\dif  \Lambda^1(\torus)) \times C^{0, \alpha} (\torus) \\
	 \bPhi_\rho(u, v) := \Phi_\rho  \circ \mathcal X(u, v) \, .
\end{gather*}
where we use the zero subscript to denote a function space upon which our normalization has been imposed.

\section{Analysis of the Hamiltonian Stationary Lagrangian Operator}
\label{sec:anop}

In order to solve the equation $\bPhi_\rho(u, v) = (0,0)$ perturbatively, it is necessary to understand the mapping properties of the linearization of the operator $\bPhi_\rho$ at $(0,0)$.  We will use the notation $\boldsymbol{L}_\rho := \Dif_{(0,0)} \bPhi_\rho$ as well as $L_\rho := \Dif_0 \Phi_\rho$ in the remainder of the paper.  Observe that $\boldsymbol{L}_\rho = L_\rho \circ \mathcal X$ by linearity.   Furthermore, since $\bPhi_\rho$ for $\rho >0$ will often be compared with its Euclidean analogue at $\rho = 0$, we introduce the notation $\mathring \bPhi := \bPhi_{0}$ and $\mathring \Phi := \Phi_{0}$ in keeping with the convention of adorning objects associated with the Euclidean metric with ``$\circ$".  Thus we shall denote the linearizations of these operators by $\mathring L := \Dif_{0} \mathring \Phi$ and $\mathring{\boldsymbol{L}} := \mathring L \circ \mathcal X$, respectively.  Again, note that $\mathring{\boldsymbol{L}} = \mathring L \circ \mathcal X$.

This section contains the following material.  First we compute linearized operator $\mathring{\boldsymbol{L}}$ and determine its kernel.  It will turn out that $\mathring{\boldsymbol{L}} $ is not self-adjoint; hence we next compute the adjoint $\mathring{\boldsymbol{L}}^\ast$ and compute its kernel.   Finally, we compute $\boldsymbol{L}_\rho$ with enough detail to be able to give estimates, in terms of $\rho$, for the difference $\boldsymbol{P}_\rho := \boldsymbol{L}_\rho - \mathring{\boldsymbol{L}}$.




\subsection{The Unperturbed Linearization}
\label{sec:unpertlin}

Let $\mathring \bPhi$ be the Hamiltonian stationary Lagrangian differential operator with respect to the standard K\"ahler structure $(\mathring g, \mathring \omega, J)$.  The task at hand is to compute its linearization at zero, denoted by $\mathring{\boldsymbol{L}}$.  Since $\mathring \bPhi = \mathring \Phi \circ \mathcal X$ and $\mathcal X$ is linear, the main computation is to find the linearization at zero of $\mathring \Phi$ acting on sections of $J(T \torus)$, denoted by $\mathring L$.  In the computations below, repeated indices are summed, a comma denotes ordinary differentiation and a semi-colon denotes covariant differentiation.

\begin{prop}
	\label{prop:unpertlin}
	Let $\Sigma\subset \C^2$ be Lagrangian for the standard symplectic structure.  Let $X$ be a $C^{3}$ section of $N(\Sigma)= J(T \Sigma)$, and write $X := X^j J E_j$ where $E_1, E_2$ is a coordinate basis for the tangent space of $\Sigma$.  Write $\mathring L(X) := \big( \mathring L^{(1)}(X), \mathring L^{(2)}(X) \big)$.  Then
	\begin{align*}
		\mathring L^{(1)}(X) &:= \dif \big( X \elbow \mathring \omega \big) \\
		\mathring L^{(2)}(X) &=  - (\mathring\Delta X^m )_{;m} -  \mathring h^{lm}\mathring h^{sk} \mathring H_s \big(  X^u \mathring B_{lku} \big)_{;m} + \mathring h^{km} \mathring H_k   \big(X^u \mathring H_{u} \big)_{;m}  - \mathring h^{lm} \mathring h^{js} \mathring h^{kq} \big( X^u \mathring B_{squ} \mathring B_{jkl} \big)_{;m} \, .
	\end{align*}	
	\end{prop}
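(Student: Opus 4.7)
The proof splits naturally into two parts, one for each component of $\mathring\Phi$. For $\mathring L^{(1)}$, my plan is to view $\mu_{tX}$ as the time-$t$ action on $\Sigma$ of an ambient flow extending $X$, so that
$$
\tfrac{\dif}{\dif t}\bigr|_{t=0}\mu_{tX}^*\mathring\omega \;=\; \mu_0^*\mathcal L_X \mathring\omega \;=\; \mu_0^*\bigl(\dif(X\elbow\mathring\omega) + X\elbow \dif\mathring\omega\bigr) \;=\; \dif(X\elbow \mathring\omega)\big|_\Sigma
$$
by Cartan's magic formula together with $\dif \mathring\omega = 0$; this gives the stated expression for $\mathring L^{(1)}$.

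For $\mathring L^{(2)}$ the plan is a direct linearization, starting from the rewriting
$$
\nabla\cdot H \;=\; h^{jm}\nabla_m H_j \;=\; h^{jm}h^{kl}\nabla_m B_{klj}
$$
valid on any Lagrangian surface because $h^{-1}$ is Levi-Civita-parallel. Along the family $\mu_{tX}(\Sigma)$, Lemma \ref{lemma:torus} is decisive: since $\mathring B$ and $\mathring H$ are $\mathring\nabla$-parallel on $\torus$, the linearization contributions carrying $\dot h^{jm}$ or $\dot h^{kl}$ both vanish identically, reducing matters to
$$
\mathring L^{(2)}(X) \;=\; \mathring h^{jm}\mathring h^{kl}\bigl(\mathring\nabla_m \dot B_{klj} - \dot\Gamma^p_{mk}\mathring B_{plj} - \dot\Gamma^p_{ml}\mathring B_{kpj} - \dot\Gamma^p_{mj}\mathring B_{klp}\bigr).
$$

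The remaining ingredients I would compute are: the variation of the induced metric, $\dot h_{kl} = 2X^u\mathring B_{klu}$, obtained from the Lagrangian condition together with $\bar\nabla J = 0$; the induced Christoffel variation $\dot\Gamma^p_{mn}$ via the Koszul formula (which simplifies because $\mathring\nabla\mathring B = 0$, leaving only $\mathring\nabla X$ paired against $\mathring B$); and the linearization of $B_{klj}=\mathring\omega\bigl((\bar\nabla_{E_k}E_l)^\perp, E_j\bigr)$, obtained by expanding the perturbed tangent vectors $\tilde E_k = E_k + t\bar\nabla_{E_k} X$ together with the tangential projection onto the moving tangent bundle. The head of $\dot B_{klj}$ is $\mathring\omega(\bar\nabla_{E_k}\bar\nabla_{E_l}X, E_j)$, which after tracing with $\mathring h^{kl}\mathring h^{jm}$ and carrying the outer $\mathring\nabla_m$ across yields the leading third-order term $-(\mathring\Delta X^m)_{;m}$.

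The main obstacle is the final assembly: a large number of terms appear from the various variations, and it is not at all immediate that they collapse into three compact $\mathring B$- and $\mathring H$-dependent expressions. What rescues the bookkeeping is the full symmetry of $\mathring B_{klj}$ in all three indices (established in the remark following the definition of $B$), the trace identity $\mathring H_s = \mathring h^{kl}\mathring B_{skl}$, and the parallelism $\mathring\nabla \mathring B = 0$. Together these allow every residual term involving a $\dot\Gamma$-factor or a first-derivative piece of $\dot B$ to be traded for either a factor of $X^u\mathring B$ contracted against $\mathring H$, or the doubly-$\mathring B$-contracted expression, reproducing exactly the three lower-order pieces displayed in the statement.
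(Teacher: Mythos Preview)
Your treatment of $\mathring L^{(1)}$ is fine and coincides with the paper's argument. The gap is in $\mathring L^{(2)}$: the proposition is stated for an \emph{arbitrary} Lagrangian surface $\Sigma\subset\C^2$, not just the torus $\Sigma_r$. Your plan invokes Lemma~\ref{lemma:torus} to kill the $\dot h^{jm}$ and $\dot h^{kl}$ contributions and to simplify $\dot\Gamma$ via $\mathring\nabla\mathring B=0$, but that lemma establishes parallelism of $\mathring B$ and $\mathring H$ only for $\Sigma_r$. For a general Lagrangian $\Sigma$ neither $\mathring H_{l;m}$ nor $\mathring\nabla\mathring B$ vanishes, so the terms you discard are genuinely present and must be accounted for in the final formula (note that the stated formula keeps $\mathring B$ and $\mathring H$ \emph{inside} the covariant derivatives, precisely because they need not be parallel). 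Your argument therefore proves at most the specialization that appears in the Corollary following the proposition, not the proposition itself.

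The paper's proof avoids this by never invoking parallelism: it fixes a point $p_0\in\Sigma$, takes geodesic normal coordinates there (so $\Gamma^s_{jk}(p_0)=0$ and $\bar\nabla_{E_j}E_k$ is normal at $p_0$), and computes $(h^{lm})'$, $(\Gamma^s_{lm})'$, $(H_l)'$, and $(B_{jkl})'$ directly, using only that $\Sigma$ is Lagrangian (so $\omega|_\Sigma=0$, $\mathring B$ is fully symmetric) and that the ambient Euclidean curvature vanishes. The simplification from the raw expression to the three compact lower-order terms comes from combining the $(h^{lm})'H_{l;m}$ piece with product-rule expansions of the $(\,\cdot\,)_{;m}$ terms and using the full symmetry of $\mathring B$---no parallelism is needed. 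If you want to follow your outline, you must restore the $\dot h$-terms and carry them through; they ultimately combine with pieces of $(\dot B)_{;m}$ rather than disappear.
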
	
	
\begin{proof} 
The formula for $\mathring L^{(1)}$ is straightforward.  Recall that it is a standard computation involving the Lie derivative of a 2-form to show that $\left. \frac{\dif}{\dif t} \mu_{tX}^\ast \mathring \omega \right|_{t=0} = \dif (X \elbow \mathring \omega) + X \elbow \dif \mathring \omega$.  Therefore since $\dif \mathring \omega = 0$ then $\mathring L^{(1)}(X) =  \dif \big( X \elbow \mathring \omega \big)$ as desired.  

The remainder of the proof concentrates on the computation for $\mathring L^{(2)}(X)$.  Let $\Sigma$ be a Lagrangian submanifold of $\C^2$ carrying the Euclidean metric $\mathring g$, and let $X$ be a section of the normal bundle of $\Sigma$.  Let $\mu_t : \C^2 \rightarrow \C^2$ be a one-parameter family of diffeomorphisms with $\frac{\dif}{\dif t} \mu_t \big|_{t=0} = X$ and set $\Sigma^t := \mu_t(\Sigma)$.   Next, choose $E_1,  E_2$ a local coordinate frame for $\Sigma$ coming from geodesic normal coordinates at $p_0\in \Sigma$ in the induced metric $\mathring h$ at $t=0$.  Then $JE_1, JE_2$ is basis for the normal bundle of $\Sigma$ at $t=0$, because $\Sigma$ is Lagrangian, but it does not necessarily hold for $|t|\neq 0$ since $\mu_t$ is not assumed to be a family of symplectomorphisms.   However, for $p$ near $p_0$, and $T_pM=T_p\Sigma\oplus J(T_p\Sigma)$.  We write $X\big|_{\Sigma} = X^j JE_j$.  Note that $X$ and $E_k$ commute along $\mu_t$, and since $X$ is transverse to $\Sigma$, we can extend the fields $E_k$ locally using the diffeomorphism $\mu_t$ to a basis for $T_{\mu_t(p)} \Sigma^t$, for $|t|$ small.  In these coordinates the matrix for $\mathring h$ on $\Sigma^t$ is the same as that for $\mu_t^*\mathring g$ on $T\Sigma$.   The computations below are evaluated at $p_0$ at $t=0$.  

In terms of the local coordinates introduced above, we have
$$ \mathring \nabla \cdot \mathring H (\Sigma^t) = \mathring h^{lm} \mathring h^{jk} \mathring B_{jkl;m}$$
where $\mathring h_{kl} := \mathring g (E_k, E_l)$ is the induced metric,  $\mathring h^{jk}$ are the components of the inverse of the induced metric, $\mathring\nabla$ is the induced connection, and
\begin{align}
	\label{eqn:divmeancurvcomp}
	\mathring B_{jkl} := \mathring\omega ( (\mathring \nabla_{E_j} E_k )^\perp, E_l) = \mathring\omega (\bar \nabla_{E_j} E_k , E_l) - \mathring\Gamma_{jk}^s \mathring \omega( E_s, E_l )
\end{align}
with $\mathring \Gamma_{jk}^s$ the Christoffel symbols of $\mathring h_{jk}$ and $\bar \nabla$ the ambient Euclidean connection.

The terms in \eqref{eqn:divmeancurvcomp} all depend on $t$.  Since $\mathring\nabla \cdot \mathring H (\Sigma^t) = \mathring h^{lm}  \mathring H_{l;m} = \mathring h^{lm} \mathring H_{l,m} - \mathring h^{lm} \mathring \Gamma_{lm}^s \mathring H_{s} $ where $\mathring H_l := \mathring h^{jk} \mathring B_{jkl}$, differentiating \eqref{eqn:divmeancurvcomp} at $t=0$ yields
$$\left. \frac{\dif}{\dif t} \mathring\nabla \cdot \mathring H (\Sigma^t) \right|_{t=0} = ( \mathring h^{lm})' \mathring H_{l;m}  - \mathring h^{lm}  ( \mathring\Gamma_{lm}^s)'  \mathring H_{s} + \mathring h^{lm} \big( ( \mathring H_{l})'  \big)_{;m}$$
where a prime denotes the value of the time derivative at zero. 

Expressions for $( \mathring h^{lm})'$ and $(\mathring \Gamma_{lm}^s)'$ and $(\mathring H_{l})' $ are now required.  To begin, it is straightforward to compute
\begin{align*}
	(\mathring h^{lm} )' &= - 2 \mathring h^{ls} \mathring h^{mq} X^u \mathring B_{squ} \\
	(\mathring \Gamma_{lm}^s)' &= \mathring h^{sq} \Big( \big(X^u \mathring B_{lqu} \big)_{;m} + \big(X^u \mathring B_{mqu} \big)_{;l} -  \big(X^u \mathring B_{lmu} \big)_{;q}  \Big)  \, .
\end{align*}
Next
\begin{equation*}
	\big( \mathring H_{l} \big)' = (\mathring h^{jk})' \mathring B_{jkl} + \mathring h^{jk} (\mathring B_{jkl} )' =  - 2 \mathring h^{js} \mathring h^{kq} X^u \mathring B_{squ} \mathring B_{jkl} + \mathring h^{jk} (\mathring B_{jkl} )'
\end{equation*}
and the fact that both $\mathring\Gamma_{jk}^s(p_0)$ and $\mathring \omega \big|_{\Sigma^t}$ vanish at $t=0$ implies 
\begin{align*}
	(\mathring B_{jkl} )' &= \left. \frac{\dif}{\dif t} \Big(\mathring \omega \big(\bar \nabla_{E_j} E_k , E_l \big) - \mathring\Gamma_{jk}^s \mathring\omega ( E_s, E_l )
 \Big) \right|_{t=0} \\
	&= \left. \frac{\dif}{\dif t} \Big( \mathring\omega \big( \bar \nabla_{E_j} E_k , E_l \big)  \Big) \right|_{t=0} \\
	&= \mathring \omega \big( \bar\nabla_X \bar\nabla_{E_j} E_k , E_l \big) + \mathring\omega \big(\bar\nabla _{E_j} E_k , \bar\nabla_X E_l \big) \\
	&= \mathring\omega \big(\bar \nabla_{E_j} \bar\nabla_{E_k} X, E_l  \big) + \mathring\omega \big(\bar\nabla _{E_j} E_k , \bar \nabla_{ E_l} X \big) \\
	&=E_j  \mathring\omega \big(\bar\nabla_{E_k} X, E_l  \big) - \mathring\omega \big(\bar\nabla_{E_k} X, \bar \nabla_{E_j} E_l  \big) + \mathring\omega \big(\bar\nabla _{E_j} E_k , \bar \nabla_{ E_l} X \big) \\
	&= - E_j  \mathring g \big( \bar\nabla_{E_k} (X^q E_q), E_l  \big) + \mathring g  \big(  \bar\nabla_{E_k} (X^q E_q), \bar \nabla_{E_j} E_l  \big) + \mathring g \big( \bar\nabla _{E_j} E_k , \bar \nabla_{ E_l} (X^q E_q) \big)
\end{align*}
where $\bar \nabla$ is the ambient connection; we have used that $X$ commutes with $E_k$ along $\mu_t$, that the ambient curvature vanishes, and that $\mathring\omega$ is parallel.  Now 
$$\bar\nabla_{ E_l} (X^q E_q ) = X^q_{,l} E_q + X^q  \bar \nabla_{E_l} E_q =  X^q_{;l} E_t  - X^q \mathring h^{uv} \mathring B_{lqu} JE_v \, .$$  
Note that at $t=0$, $\bar \nabla_{E_k} E_j$ is normal to $\Sigma$ at $p_0$, and moreover $\mathring g(\bar\nabla _{E_j} E_k, J E_m)= -\mathring B_{jkm}$ at $p_0$.  Thus we have 
\begin{align*}
	(\mathring B_{jkl})' &= - E_j  \mathring g \big( X^q_{;k} E_q - X^q \mathring h^{uv} \mathring B_{kqu} JE_v, E_l  \big) + \mathring g  \big(X^q_{;k} E_q  - X^q \mathring h^{uv} \mathring B_{kqu} JE_v , \bar \nabla_{E_j} E_l  \big) \\
	&\qquad + \mathring g \big( \bar\nabla _{E_j} E_k , X^q_{;l} E_q  - X^q \mathring h^{uv} \mathring B_{lqu} JE_v \big) \\ 
	&= - X^q_{;kj} \mathring h_{ql}  + X^q \mathring B_{kqu} \mathring B_{jlv} \mathring h^{uv}  + X^q \mathring B_{jku} \mathring B_{lqv} \mathring h^{uv}  \, .
\end{align*}
Everything can now be put together:
\begin{align*}
	\mathring L^{(2)}(X)  &= - 2 \mathring h^{ls} \mathring h^{mq} X^u \mathring B_{squ} \mathring H_{l;m} \\[-1ex]
	&\qquad -  \mathring h^{sq} \mathring H_s \big( 2 \big(X^u \mathring B_{lqu} \big)_{;m}\mathring h^{lm}  -  \big(X^u \mathring H_{u} \big)_{;q}  \big)   \\
	&\qquad  - 2 \mathring h^{lm} \mathring h^{js} \mathring h^{kq} \big( X^u \mathring B_{squ} \mathring B_{jkl} \big)_{;m} \\
	&\qquad + \mathring h^{lm} \mathring h^{jk}\big( - X^q_{;kj} \mathring h_{ql} + X^q \mathring B_{kqu} \mathring B_{jlv} \mathring h^{uv}  + X^q \mathring B_{jku} \mathring B_{lqv} \mathring h^{uv} \big)_{;m} \\[1ex]
	&= - (\mathring\Delta X^m )_{;m}-  \mathring h^{sq} \mathring H_s \big(  \big(X^u \mathring B_{lqu} \big)_{;m}\mathring h^{lm}  -  \big(X^u \mathring H_{u} \big)_{;q}  \big) \\
	&\qquad  - \mathring h^{lm} \mathring h^{js} \mathring h^{kq} \big( X^u \mathring B_{squ} \mathring B_{jkl} \big)_{;m}
\end{align*}
This is the desired formula.  
\end{proof}

To compute $\mathring L^{(2)}$ for the torus $\Sigma_r$, note that both $\mathring B$ and $\mathring H$ are parallel tensors in this case.  Consequently the second fundamental form term in $\mathring L^{(2)}$ becomes simply $X \mapsto -\mathring A^{l}_k X^k_{;l}$ where 
$$\mathring A^{kl} := \mathring H_s \mathring B^{lsm} - \mathring H^l \mathring H^m+ \mathring B_{sq}^m\mathring B^{sqm}$$
and furthermore, we can compute precisely: substituting and $\mathring h_{kl} = r_k^2 \delta_{kl} $ and $\mathring B_{jkl} = r_s^2 \delta_{sj} \delta_{sk} \delta_{sl}$ for the induced metric and symplectic second fundamental form of $\torus$ with respect to the Euclidean metric yields 
$$\mathring A^{lm}= \frac{ 2 \delta^{lm} }{r_m^4}- \frac{1}{r_l^{ 2} r_m^{2} } \, .$$
Now let $X = \mathcal X (u, v)$ as in \eqref{eqn:ansatz} and substitute this into the formul\ae\ of Proposition \ref{prop:unpertlin} to find the linearization $\mathring{\boldsymbol{L}}$.

\begin{cor}
	Let $(u, v) \in C_0^{4, \alpha}(\torus) \times C_0^{4, \alpha}(\torus)$.  Write $\mathring{\boldsymbol{L}} = \big( \mathring{\boldsymbol{L}}^{(1)}, \mathring{\boldsymbol{L}}^{(2)} \big)$.  Then 
	\begin{align*}
		\mathring{\boldsymbol{L}}^{(1)}(u, v) &:= \dif \, \mathring \star \, \dif u \\
		\mathring{\boldsymbol{L}}^{(2)}(u, v) &:=  \mathring \Delta ( \mathring \Delta v ) + \mathring A^{lm} v_{;lm}  + \mathring A^{lm} \eps_{l}^k u_{;mk}\, .
	\end{align*}
\end{cor}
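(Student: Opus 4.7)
The plan is to substitute the Ansatz $X = \mathcal X(u,v)$ into the formulas from Proposition \ref{prop:unpertlin} and exploit three structural features of the model torus $\torus$: the induced metric $\mathring h$ is flat, the symplectic second fundamental form $\mathring B$ and the symplectic mean curvature $\mathring H$ are covariantly constant, and the curl coefficients $\eps^j_k$ in the Ansatz are antisymmetric when weighted by $r_k^{-2}$.

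The first component $\mathring{\boldsymbol{L}}^{(1)}$ is essentially tautological. By Proposition \ref{prop:unpertlin}, $\mathring L^{(1)}(X) = \dif(X \elbow \mathring \omega)$, and by design of the Ansatz \eqref{eqn:ansatz}, $\mathcal X(u,v) \elbow \mathring \omega \big|_{\torus} = \dif v + \mathring \star \dif u$. Taking $\dif$ and using $\dif \dif v = 0$ yields $\mathring{\boldsymbol{L}}^{(1)}(u,v) = \dif \mathring \star \dif u$.

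For $\mathring{\boldsymbol{L}}^{(2)}$, I would write $\mathcal X(u,v) = X^k J E_k$ with $X^k = -r_k^{-2}(v_{,k} + \eps^j_k u_{,j})$ and work with the simplified form of $\mathring L^{(2)}$ that emerges once parallelism of $\mathring B$ and $\mathring H$ is applied: a fourth-order piece $-(\mathring \Delta X^m)_{;m}$ plus a second-order piece already identified in the surrounding discussion as $-\mathring A^l_k X^k_{;l}$. Substituting the expression for $X^k$ directly into the second-order piece, using $\mathring A^l_k = r_k^2 \mathring A^{lk}$ together with the symmetry of $\mathring A^{lm}$, yields $\mathring A^{lm} v_{;lm} + \mathring A^{lm} \eps^k_l u_{;mk}$ after a short index manipulation.

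The substantive step is the fourth-order piece $-(\mathring \Delta X^m)_{;m}$, which a priori contains fourth derivatives of both $u$ and $v$ but must, according to the statement, collapse to $\mathring \Delta (\mathring \Delta v)$ alone. I expect this to follow from the identity
\begin{equation*}
\sum_m X^m_{,m} \;=\; -\sum_m r_m^{-2} v_{,mm} - \sum_{j,m} r_m^{-2} \eps^j_m u_{,jm} \;=\; -\mathring \Delta v,
\end{equation*}
in which the $u$-contribution vanishes because $r_m^{-2} \eps^j_m$ is antisymmetric under $j \leftrightarrow m$ (one checks $r_2^{-2} \eps^1_2 = (r_1 r_2)^{-1} = -r_1^{-2} \eps^2_1$) while $u_{,jm}$ is symmetric. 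Since $\mathring h$ is flat, partial and covariant derivatives commute freely, so $(\mathring \Delta X^m)_{;m} = \mathring \Delta \bigl( \sum_m X^m_{,m} \bigr) = -\mathring \Delta(\mathring \Delta v)$, which delivers the needed cancellation of the $u$-fourth derivatives. Combining the fourth-order and second-order contributions then gives the formula in the statement, and the remainder of the argument is sign- and index-bookkeeping.
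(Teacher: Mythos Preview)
Your proposal is correct and follows exactly the approach the paper indicates: substitute $X=\mathcal X(u,v)$ into the formulas of Proposition~\ref{prop:unpertlin}, use that $\mathring B$ and $\mathring H$ are parallel so that $\mathring L^{(2)}$ reduces to $-(\mathring\Delta X^m)_{;m}-\mathring A^{l}_{k}X^{k}_{;l}$, and simplify. Your observation that the antisymmetry of $r_m^{-2}\eps^j_m$ kills the fourth-order $u$-contribution, together with the flatness of $\mathring h$ allowing $(\mathring\Delta X^m)_{;m}=\mathring\Delta\bigl(\sum_m X^m_{,m}\bigr)$, is precisely the mechanism the paper leaves implicit.
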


\noindent

\subsection{The Kernel of the Unperturbed Linearization}
\label{sec:kernel}

The determination of the kernel of the linearized operator $\mathring{\boldsymbol{L}}$ is best done in two stages.  First one finds the kernel of $\mathring L$ and then one takes into account the effect of $\mathcal X$.  Thus the starting point is to express the formul\ae\ of Proposition \ref{prop:unpertlin} explicitly in local coodinates.  To this end, suppose that $\torus$ is given in local coordinates by its standard embedding.  Make the \emph{Ansatz} $X:= \sum_k X^k (-r_k\me^{\mi \theta^k} \frac{\partial}{\partial z^k})$ for the deformation vector field in the formul\ae\ from Proposition \ref{prop:unpertlin} to obtain
$$ \mathring L (X) = -\left(  \sum_{i,k} r_k^2 X^k_{,i} \dif \theta^i \wedge \dif \theta^k \, \, , \, \,   \sum_{i,k}  \frac{1}{r_k^{2}} \big( X^i_{,kki} -  X^i_{,k} \big) + \sum_{i} \frac{2}{r_i^2} X^i_{,i} \right) \, .$$
The operator $\mathring L$ thus becomes a constant-coefficient differential operator on the torus.  Solving the equation $\mathring L (X) = (0,0)$ for the kernel of $\mathring L$ thus becomes a matter of Fourier analysis.  (Note: this calculation appears in \cite{oh2} for the $n$-dimensional torus; it is included here for the sake of completeness.)

\begin{prop}
	Expressed in the local coordinates for the standard embedding of $\torus$, the kernel of $\mathring L$ consists of vector fields $X :=  \sum_k X^k (-r_k\me^{\mi \theta^k} \frac{\partial}{\partial z^k})$ where
	$$X^k = \lambda_k +  \frac{1}{r_k^2} \frac{\partial f}{\partial \theta^k}$$
	with $f(\theta) := a +  \sum_j \big( b_{j1} \cos(\theta^j) + b_{j2} \sin(\theta^j) \big) + c_1 \cos(\theta^1 - \theta^2) + c_2 \sin(\theta^1 - \theta^2)$ and $a, b_{js}, c_s, \lambda_k \in \R$.  
\end{prop}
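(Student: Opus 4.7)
The approach is to treat the two components of the system $\mathring L(X) = (0,0)$ in sequence: first use de Rham theory on $\T^2$ to convert the first-order component into the existence of a scalar potential, and then feed the resulting ansatz for $X$ into the third-order component and diagonalize by Fourier series.

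Using the coordinate formula for $\mathring L$ displayed just above the statement, the vanishing of its first component reduces, after antisymmetrizing in the two indices, to the single scalar identity $r_2^2 X^2_{,1} = r_1^2 X^1_{,2}$, which is precisely the closedness on $\T^2$ of the 1-form $\alpha := \sum_k r_k^2 X^k\,\dif\theta^k$. Since $H^1(\T^2;\R)$ is two-dimensional with basis $\{[\dif\theta^1],[\dif\theta^2]\}$, I may write $\alpha = \dif f + \gamma_1\,\dif\theta^1 + \gamma_2\,\dif\theta^2$ for some $f \in C^\infty(\T^2)$ and constants $\gamma_k \in \R$. Setting $\lambda_k := \gamma_k/r_k^2$ this gives
$$X^k = \lambda_k + \frac{1}{r_k^2}\frac{\partial f}{\partial \theta^k},$$
which already has the form claimed; it remains only to identify the admissible $f$.

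Substituting this ansatz into the second component of $\mathring L$ annihilates the constants $\lambda_k$ (they carry no derivatives), and because partials on $\T^2$ commute, the equation collapses to the constant-coefficient linear PDE for $f$ alone
$$\sum_{i,k}\frac{1}{r_i^2 r_k^2}\bigl(f_{,iikk} - f_{,ik}\bigr) + \sum_i \frac{2}{r_i^4}f_{,ii} = 0.$$
Expanding $f = \sum_{n \in \Z^2} a_n \me^{\mi n \cdot \theta}$ diagonalizes the operator, and after routine algebra its eigenvalue on the mode $n = (n_1, n_2)$ simplifies to
$$P(n) := \frac{n_1^2(n_1^2-1)}{r_1^4} + \frac{n_2^2(n_2^2-1)}{r_2^4} + \frac{2 n_1 n_2(n_1 n_2 + 1)}{r_1^2 r_2^2}.$$

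The main — though elementary — step is to observe that each summand of $P(n)$ is non-negative for every $n \in \Z^2$: the first two because $n_j^2(n_j^2-1) \geq 0$ for integer $n_j$ with equality iff $n_j \in \{-1,0,1\}$, and the third because $n_1 n_2(n_1 n_2 + 1)$ is a product of two consecutive integers and hence $\geq 0$, with equality iff $n_1 n_2 \in \{0,-1\}$. Intersecting these two conditions yields exactly the seven admissible modes $(0,0)$, $(\pm 1, 0)$, $(0, \pm 1)$, $(1,-1)$, $(-1,1)$, whose real linear combinations span precisely the functions $1$, $\cos\theta^j$, $\sin\theta^j$, $\cos(\theta^1-\theta^2)$, $\sin(\theta^1-\theta^2)$ in the statement. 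Together with the two free constants $\lambda_k$, this exhausts $\ker\mathring L$.
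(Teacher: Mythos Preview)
Your proof is correct and follows the same two-stage strategy as the paper: use the closedness of $\alpha = \sum_k r_k^2 X^k\,\dif\theta^k$ on $\T^2$ to reduce to a scalar potential $f$ (plus harmonic constants $\lambda_k$), then solve the resulting constant-coefficient fourth-order PDE for $f$ by Fourier expansion. The only substantive difference is in the Diophantine endgame. The paper manipulates the symbol into an expression of the form $n_1^2 \pm n_1 + (r_1^2/r_2^2)(n_2^2 + n_2) = 0$ and then argues via integer roots of the quadratic $x^2 \pm x + C^2$; your route --- rewriting the symbol as
\[
P(n) = \frac{n_1^2(n_1^2-1)}{r_1^4} + \frac{n_2^2(n_2^2-1)}{r_2^4} + \frac{2n_1 n_2(n_1 n_2+1)}{r_1^2 r_2^2}
\]
and observing that each summand is separately non-negative over $\Z^2$ --- is cleaner and more transparent: it makes the non-negativity of the symbol manifest, isolates the zero locus immediately as the intersection $n_j \in \{-1,0,1\}$ with $n_1 n_2 \in \{0,-1\}$, and makes it obvious that the kernel does not depend on the particular ratio $r_1/r_2$. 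The paper's argument reaches the same seven modes but with a bit more algebraic unpacking.
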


\begin{proof}  
The first equation in $\mathring L (X) = (0,0)$ implies either: that $X^k$ is constant for every $k$, and thus the one-form $r_k^2 X^k \dif \theta^k$ is harmonic on $\Sigma_r$; or else that there is a function $f : \T^2 \rightarrow \R$ with 
$$ X^k = \frac{1}{r_k^2} \frac{\partial f}{\partial \theta^k} \, .$$

In the first case, the second equation in $\mathring L (X) = (0,0)$ is satisfied trivially.  Note that a one-form of this type is \emph{not} exact, implying that $X$ is \emph{not} induced by a Hamiltonian vector field.  In the second case, insert $X^k  := r_k^{-2} \frac{\partial f}{\partial \theta^k}$ into the second equation to find
\begin{align*}
	\sum_{i,k} \frac{1}{r_i^2 r_k^2} \big(  f_{,iikk} - f_{,ik} \big) + \sum_i \frac{2}{r_i^4} f_{,ii} = 0 \, .
\end{align*}
This is a constant-coefficient, fourth order elliptic equation on the  torus which can be solved by taking the discrete Fourier transform. The Fourier coefficients $\hat f (\vec n) := \langle f, \me^{\mi \vec \theta \cdot \vec n} \rangle $ of the solutions must thus satisfy
$$ \left(  \sum_{i,k} \frac{n_i^2 n_k^2 + n_i n_k }{r_i^2 r_k^2} - \sum_i \frac{2 n_i^2}{r_i^4} \right) \hat f(\vec n) = 0 \, .
$$
The trivial solution of this equation is $n_1 = n_2 = 0$ and this corresponds to the constant functions.  There are also non-trivial solutions of this equation: either  $n_i =  \pm 1$ for some fixed $i$ and all other $n_k =0$; or else $n_i = \pm 1$ and $n_j = \mp 1$ for $i\neq j$.   The fact that there are no other non-trivial solutions can be seen as follows.  Summing over $i, k \in \{1, 2 \}$ explicitly and re-arranging terms yields the equation $n_1^2 \pm  n_1 + r_1^2 r_2^{-2} \big( n_2^2 + n_2 \big) = 0$.  But since the quadratic $x^2 \pm x + C^2$ only has the integer roots $x = 0,1$ when $C=0$ and no integer roots when $C \neq 0$, it must be the case that $(n_1, n_2) = (1,0), (0,1), (1,-1)$ or $(-1,1)$.   Computing the inverse Fourier transform now yields the desired vector fields in the kernel of $\mathring L$.
\end{proof}

Observe that there is a geometric interpretation of the kernel of $\mathring L$.  The one-parameter families of complex structure-preserving isometries of $\C^2$ are the unitary rotations and the translations.  Each of these is a Hamiltonian deformation where the Hamiltonians are given by linear functions in the first case and quadratic polynomials of the form $z \mapsto z^\ast \cdot A \cdot z$ in the second case, where $A$ is a Hermitian matrix.  Of these, only the non-diagonal Hermitian matrices generate non-trivial motions of  $\torus$.  The restrictions of these Hamiltonian functions to $\torus$ are the functions of the form
\begin{equation}
	\label{eqn:kernel}
	f(\theta) =  \sum_j \big( b_{j1} \cos(\theta^j) + b_{j2} \sin(\theta^j) \big) + c_1 \cos(\theta^1 - \theta^2) + c_2 \sin(\theta^1 - \theta^2) \qquad a_k, b_{js}, c_s \in \R
\end{equation}
in the kernel of $\mathring L$.  The remaining elements of the kernel of $\mathring L$ derive from another set of deformations of $\torus$ which preserve both the Lagrangian condition and the Hamiltonian-stationarity.  These arise from allowing the radii of $\torus$ to vary --- in other words the deformations $\Sigma^t := \Sigma_{r + a t }$ for some $a = (a_1, a_2)$.  

The effect of the substitution $X = \mathcal X(u, v)$ is to restrict to a space of deformations that are transverse to those deformations for which $X \elbow \mathring \omega$ is closed but non-exact.  In particular, this excludes the harmonic one-forms from the kernel of the operator $\mathring{\boldsymbol{L}}$.

\begin{cor}
	\label{cor:kernel}
	The kernel of $\mathring{\boldsymbol{L}}$ is
	$$\mathcal K :=  \{ 0 \} \times \mathrm{span}_{\R} \{ \cos(\theta^1), \cos(\theta^2),  \sin(\theta^1), \sin(\theta^2), \cos(\theta^1 - \theta^2), \sin(\theta^1 - \theta^2) \} \, .$$ 
\end{cor}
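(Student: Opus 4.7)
The key structural fact is that $\mathring{\boldsymbol{L}} = \mathring L \circ \mathcal X$, so a pair $(u,v) \in C_0^{4,\alpha}(\torus) \times C_0^{4,\alpha}(\torus)$ lies in $\ker \mathring{\boldsymbol{L}}$ if and only if the section $\mathcal X(u,v)$ lies in $\ker \mathring L$. My plan is to match the explicit form of $\mathcal X(u,v)$ against the kernel description from the preceding proposition and then to use periodicity and the zero-mean normalizations to collapse this matching equation to the advertised six-dimensional space. Since $JE_k = -r_k\me^{\mi\theta^k}\partial/\partial z^k$, one reads off from the Ansatz \eqref{eqn:ansatz} that the $JE_k$-components of $\mathcal X(u,v)$ are
\begin{equation*}
X^k \,=\, -\frac{1}{r_k^2}\left(\frac{\partial v}{\partial\theta^k} + \sum_j \eps_k^j \frac{\partial u}{\partial\theta^j}\right),
\end{equation*}
and $\mathcal X(u,v) \in \ker \mathring L$ becomes the requirement that there exist constants $\lambda_1,\lambda_2 \in \R$ and a function $f$ lying in the seven-dimensional span described in the previous proposition with $X^k = \lambda_k + r_k^{-2}\,\partial f/\partial\theta^k$ for $k=1,2$.

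I would then integrate each matching equation over its $\theta^k$-cycle at fixed complementary angle. By periodicity the $\partial v/\partial\theta^k$ and $\partial f/\partial\theta^k$ contributions vanish, leaving relations of the form $r_1 r_2\lambda_1 = \mathrm{d}\bar u/\mathrm{d}\theta^2$ (and its $k=2$ analogue), where $\bar u(\theta^2) := \frac{1}{2\pi}\int_0^{2\pi} u(\theta^1,\theta^2)\,d\theta^1$. Periodicity of $\bar u$ forces $\lambda_1 = \lambda_2 = 0$, and then the zero-mean normalization on $u$ pins both one-dimensional angular averages of $u$ to be identically zero.

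With the harmonic $\lambda_k$ modes excluded, the two pointwise matching equations can be repackaged (upon multiplying by $-r_k^2$ and assembling against $d\theta^k$) into the single exactness relation $df = -dv - \mathring\star\, du$ on $\torus$, with $\mathring\star$ the Hodge star of the flat induced metric $\mathring h$. Applying $d$ and using $d^2 = 0$ gives $d\mathring\star du = 0$, i.e.\ $\mathring\Delta u = 0$; combined with the mean-zero normalization this forces $u \equiv 0$ by standard Hodge theory on the flat $\T^2$. The surviving identity $df = -dv$ then identifies $v$ with $-f$ modulo an additive constant, and the zero-mean condition on $v$ absorbs the constant term $a$ in the explicit form of $f$. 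What remains places $v$ in exactly the six-dimensional span asserted by the corollary.

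The main obstacle is the cohomological bookkeeping that rules out both the harmonic one-form component $r_k^2\lambda_k\,d\theta^k \in \ker\mathring L$ and the constant mode of $u$. In effect this is the whole point of introducing the Ansatz $\mathcal X$: it is engineered precisely so that $\mathcal X(u,v)\elbow\mathring\omega$ is automatically of exact-plus-coexact type, which together with the normalization transversally avoids the harmonic piece of $\ker\mathring L$. The residual six-dimensional kernel one is left with corresponds geometrically to the unitary rotations and translations of $\torus$ in $\C^2$, as the remark in the text will confirm.
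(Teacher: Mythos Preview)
Your argument is correct and aligns with the paper's reasoning. The paper does not give a formal proof of this corollary; it simply remarks that the Ansatz $\mathcal X$ is designed so that $\mathcal X(u,v)\elbow\mathring\omega = dv + \mathring\star du$ has no harmonic component, which excludes the $\lambda_k$-part of $\ker\mathring L$, and that the zero-mean normalization removes the constants. Your explicit integration over $\theta^k$-cycles and subsequent Hodge-theoretic step make this precise.

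One remark on efficiency: a slightly more direct route is available from the explicit formula for $\mathring{\boldsymbol L}$ in the preceding corollary. The first component reads $\mathring{\boldsymbol L}^{(1)}(u,v) = d\,\mathring\star\,du$, so $\mathring{\boldsymbol L}^{(1)}(u,v) = 0$ already forces $\mathring\Delta u = 0$ and hence $u\equiv 0$ by the mean-zero constraint, without first matching against $\ker\mathring L$ or eliminating the $\lambda_k$. With $u=0$ in hand, the second component reduces to $\mathring\Delta^2 v + \mathring A^{lm}v_{;lm} = 0$, which is exactly the Fourier-analytic equation solved in the proposition (with $f=-v$), giving the six-dimensional span directly. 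Your route via component matching is equivalent but takes one extra detour through the harmonic modes. The side observation that the one-dimensional angular averages of $u$ vanish is correct but not needed once you pass to the Hodge argument.
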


\noindent Note: the constant functions are not in $\mathcal K$ because the conditions $\int_{\torus} u \, \dif \mathrm{Vol}_{\Sigma_r}^\circ = \int_{\torus} v \, \dif \mathrm{Vol}_{\Sigma_r}^\circ = 0$ have been imposed on functions in the domain of $\mathring{\boldsymbol{L}}$.

\subsection{The Adjoint of the Unperturbed Linearization}
\label{sec:adjoint}

The operator $\mathring{\boldsymbol{L}}$ computed in Section \ref{sec:unpertlin} is not self-adjoint.  Thus it is necessary to compute its adjoint and find the kernel of its adjoint in order to determine a space onto which $\mathring{\boldsymbol{L}}$ is surjective. 

\begin{prop} 
	The formal $L^2$ adjoint of $\mathring{\boldsymbol{L}} : C_0^{4,\alpha}(\torus) \times C_0^{4, \alpha}(\torus) \rightarrow  C^{2,\alpha}(\dif  \Lambda^1(\torus)) \times C^{0, \alpha} (\torus)$ is the operator $\mathring{\boldsymbol{L}}^\ast := \big( [\mathring{\boldsymbol{L}}^\ast ]^{(1)}, [ \mathring{\boldsymbol{L}}^\ast ]^{(2)} \big) : C^{4,\alpha}(\dif  \Lambda^1(\torus)) \times C^{4, \alpha} (\torus) \rightarrow C_0^{2,\alpha}(\torus) \times C_0^{0, \alpha}(\torus)$ where 
	\begin{equation}
		\label{eqn:adjoint}
		\begin{aligned}[]
			 [\mathring{\boldsymbol{L}}^\ast ]^{(1)} (u, v) &:= \mathring \Delta u + \mathring A^{ml} \eps_{l}^k v_{;mk} \\
			[ \mathring{\boldsymbol{L}}^\ast ]^{(2)} (u, v) &:=  \mathring \Delta ( \mathring \Delta v ) + \mathring A^{lm} v_{;lm} \, .
		\end{aligned}
	\end{equation}
	and $\mathring A^{lm}= 2r_m^{-4} \delta^{lm}- r_l^{-2}r_m^{-2}$ as computed earlier.
\end{prop}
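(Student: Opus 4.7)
The plan is direct integration by parts on the closed torus $\torus$, after specifying the $L^2$ pairings in use. On the codomain of $\mathring{\boldsymbol{L}}$, identify an exact two-form $\beta \in \dif\Lambda^1$ with the (necessarily mean-zero) function $\mathring\star\beta$, and pair the resulting scalars with test functions $(\tilde u, w) \in C^{4,\alpha}(\dif\Lambda^1) \times C^{4,\alpha}$ using $\dif\mathrm{Vol}_{\torus}^\circ$. On the domain, use the ordinary $L^2$ inner product on pairs of functions. The goal is then to rewrite $\langle \mathring{\boldsymbol{L}}(u,v),(\tilde u, w)\rangle$ in the form $\int_\torus u\cdot P(\tilde u, w) + v\cdot Q(\tilde u, w)$ and read off $[\mathring{\boldsymbol{L}}^\ast]^{(1)} = P$ and $[\mathring{\boldsymbol{L}}^\ast]^{(2)} = Q$.

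First I would handle the Lagrangian component. Since $\torus$ is flat, the identity $\dif\mathring\star\dif u = (\mathring\Delta u)\,\dif\mathrm{Vol}_{\torus}^\circ$ holds, and two integrations by parts give
\begin{equation*}
\int_\torus \tilde u \cdot \dif \mathring\star \dif u \;=\; \int_\torus \tilde u\,\mathring\Delta u \,\dif\mathrm{Vol}_{\torus}^\circ \;=\; \int_\torus u \, \mathring\Delta \tilde u \, \dif\mathrm{Vol}_{\torus}^\circ\, ,
\end{equation*}
contributing $\mathring\Delta \tilde u$ to the $u$-slot of the adjoint. Note there are no boundary terms because $\torus$ is closed.

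For the Hamiltonian stationary component, the three terms in $\mathring{\boldsymbol{L}}^{(2)}$ are each handled separately. The bi-Laplacian $\mathring\Delta(\mathring\Delta v)$ is formally self-adjoint after four integrations by parts, contributing $\mathring\Delta\mathring\Delta w$ to the $v$-slot. The term $\mathring A^{lm} v_{;lm}$ has constant coefficients on $\torus$, since from the explicit formula $\mathring A^{lm}=2r_m^{-4}\delta^{lm}-r_l^{-2}r_m^{-2}$ one sees that $\mathring A^{lm}$ depends only on the radii; hence two integrations by parts move both derivatives onto $w$, yielding $\mathring A^{lm} w_{;lm}$, and the manifest symmetry $\mathring A^{lm}=\mathring A^{ml}$ ensures this is the same expression as in the statement. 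The cross-term $\mathring A^{lm}\eps_l^k u_{;mk}$ also has constant coefficients, so integrating by parts twice moves both derivatives onto $w$, producing $\mathring A^{lm}\eps_l^k w_{;mk}$, which now contributes to the $u$-slot. Renaming the test functions $(\tilde u, w)\mapsto (u,v)$ to match the statement of the proposition gives exactly \eqref{eqn:adjoint}.

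Finally I would verify that the image of $\mathring{\boldsymbol{L}}^\ast$ lies in $C_0^{2,\alpha}(\torus)\times C_0^{0,\alpha}(\torus)$, as claimed. Each term in $[\mathring{\boldsymbol{L}}^\ast]^{(1)}(u,v)$ and $[\mathring{\boldsymbol{L}}^\ast]^{(2)}(u,v)$ is either a Laplacian of a smooth function or a constant linear combination of its second partial derivatives, and each integrates to zero on the closed torus; this is consistent with the mean-zero normalization imposed on the domain of $\mathring{\boldsymbol{L}}$ (so that the adjoint is determined modulo additive constants, which the $C_0$ convention kills). There is no real obstacle here: the proof is a bookkeeping calculation, and the only subtlety is fixing the sign convention for $\mathring\Delta$ so that the identity $\dif \mathring\star\dif u = (\mathring\Delta u)\dif\mathrm{Vol}_\torus^\circ$, and hence the sign of $\mathring\Delta u$ in the adjoint, is consistent with the convention used throughout the paper.
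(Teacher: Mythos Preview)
Your proposal is correct and follows exactly the approach the paper indicates: the paper's own proof reads simply ``Straightforward integration by parts based on the formul\ae\ for $\mathring{\boldsymbol{L}}$ and $\mathcal X$,'' and you have supplied precisely that computation in detail, using the explicit form of $\mathring{\boldsymbol{L}}$ from the preceding corollary together with the identification of $C^{k,\alpha}(\dif\Lambda^1(\torus))$ with $C^{k,\alpha}_0(\torus)$ via the Hodge star (which the paper also adopts explicitly). Your remarks on constant coefficients, symmetry of $\mathring A^{lm}$, and the mean-zero codomain are all correct and on point.
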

\begin{proof}
	Straightforward integration by parts based on the formul\ae\ for $\mathring{\boldsymbol{L}}$ and $\mathcal X $.
\end{proof}

The kernel $\mathcal K^\ast $ of the adjoint $\mathring{\boldsymbol{L}}^\ast$ is now easy to find, given the formula \eqref{eqn:adjoint}.  Consider the equation $\mathring{\boldsymbol{L}}^\ast (\star u, v) = (0,0)$ for $(u, v) \in C^{4,\alpha}_0 (\torus) \times C^{4, \alpha} (\torus)$.  The second of these equations along with the calculations of Section \ref{sec:kernel} implies that $v$ is of the form \eqref{eqn:kernel} found before.  Now $u$ can be determined from the first of these equations via $\mathring \Delta u = -\mathring A^{lm} \eps_{l}^k v_{;mk}$.  Since the form of $\mathring A^{lm} $ is known, one can in fact determine $u$ explicitly.  Note that we will employ a slight abuse of notation below by identifying $C^{k,\alpha}_0 (\torus)$ with $C^{k,\alpha}(\dif  \Lambda^1(\torus))$ via the Hodge star operator.

\begin{cor}  
	\label{cor:cokernel}
	The kernel of $\mathring{\boldsymbol{L}}^\ast$ is
\begin{align*}
	\mathcal K^\ast  &:=  \mathrm{span}_{\R} \{ (0,1) \} \oplus  \mathrm{span}_{\R} \big\{   \cos(\theta^1) \cdot  (1, r_1 r_2 )  \, , \,  \cos(\theta^2) \cdot  (1, - r_1 r_2 )  \,  , \\
	&\qquad \qquad \qquad \qquad \qquad \qquad \quad \sin(\theta^1) \cdot  (1, r_1 r_2 ) \, , \, \sin(\theta^2) \cdot  (1, - r_1 r_2 ) \, , \\
	&\qquad \qquad \qquad \qquad \qquad \qquad \qquad \quad \cos(\theta^1 - \theta^2)\cdot (0,1) \, , \,  \sin(\theta^1 - \theta^2) \cdot (0,1) \big\}   \, .
\end{align*}
\end{cor}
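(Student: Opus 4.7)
The plan is to solve $\mathring{\boldsymbol{L}}^\ast(u,v) = (0,0)$ in two stages: first solve the second component, which is a closed equation in $v$ alone, and then invert $\mathring\Delta$ in the first component to recover $u$ subject to the mean-zero normalization.

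For the first stage, I would expand $[\mathring{\boldsymbol{L}}^\ast]^{(2)}(u,v) = \mathring\Delta(\mathring\Delta v) + \mathring A^{lm}v_{;lm}$ in the coordinates $(\theta^1,\theta^2)$ on $\Sigma_r$, where $\mathring h^{kl}=r_k^{-2}\delta^{kl}$ and $\mathring A^{lm}=2r_m^{-4}\delta^{lm}-r_l^{-2}r_m^{-2}$. A direct comparison shows this fourth-order constant-coefficient equation on $\T^2$ is identical in form to the equation for $f$ that appears in the proof of the kernel of $\mathring L$ in Section~\ref{sec:kernel}. Its Fourier analysis --- in particular the observation that $x^2 \pm x + C^2 = 0$ has integer roots only at $(x,C) \in \{(0,0),(1,0)\}$ --- forces $v$ to lie in the seven-dimensional span of $\{1,\cos\theta^j,\sin\theta^j,\cos(\theta^1-\theta^2),\sin(\theta^1-\theta^2)\}$.

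For the second stage, for each such $v$ I would solve $\mathring\Delta u = -\mathring A^{ml}\eps_l^k v_{;mk}$ with the mean-zero normalization on $u$. A short computation using $\eps_1^2 = -r_1/r_2$ and $\eps_2^1 = r_2/r_1$ reduces the right-hand side to
$$r_1^{-3}r_2^{-1}(v_{,11}+v_{,12}) - r_1^{-1}r_2^{-3}(v_{,22}+v_{,12}),$$
which I would evaluate mode by mode. For $v \in \{1,\cos(\theta^1-\theta^2),\sin(\theta^1-\theta^2)\}$ the combinations $v_{,11}+v_{,12}$ and $v_{,22}+v_{,12}$ both vanish, forcing $u=0$ and yielding the pairs $(0,1)$, $(0,\cos(\theta^1-\theta^2))$, $(0,\sin(\theta^1-\theta^2))$. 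For $v = \cos\theta^1$ or $\sin\theta^1$ the right-hand side is a nonzero multiple of $v$, and inverting $\mathring\Delta$ on that Fourier mode gives $u = (r_1 r_2)^{-1}\,v$, producing after rescaling the pairs $(1,r_1 r_2)\cdot\cos\theta^1$ and $(1,r_1 r_2)\cdot\sin\theta^1$. For the $\theta^2$-modes the analogous computation yields $u = -(r_1 r_2)^{-1}\,v$ and hence the pairs involving $(1,-r_1 r_2)$.

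The only conceptual point is noticing that the $v$-equation coincides with the one already solved in Section~\ref{sec:kernel}, so no new Fourier analysis is required. The main bookkeeping obstacle is the asymmetry between the $\theta^1$- and $\theta^2$-modes arising from the off-diagonal structure of $\eps_l^k$ (the factors $-r_1/r_2$ versus $r_2/r_1$), which is exactly what produces the opposite signs of $r_1 r_2$ in the two families of basis vectors.
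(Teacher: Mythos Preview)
Your approach is correct and essentially identical to the paper's: the paper also first solves $[\mathring{\boldsymbol{L}}^\ast]^{(2)}(u,v)=0$ by recognizing it as the same fourth-order equation on $v$ already analyzed in Section~\ref{sec:kernel}, and then determines $u$ from $\mathring\Delta u = -\mathring A^{ml}\eps_l^k v_{;mk}$. You supply more of the explicit mode-by-mode computation than the paper does (it simply asserts ``one can in fact determine $u$ explicitly''), but the logic is the same.
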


\noindent Note that the projections of the  $\cos(\theta^1- \theta^2)$ and $\sin(\theta^1- \theta^2)$ co-kernel elements to the first coordinate vanish; this fact will be used crucially later on.

\subsection{The Perturbed Linearization}
\label{sec:perturbterm}

Let $\bPhi_{\rho}$ be the Hamiltonian stationary Lagrangian differential operator with respect to the K\"ahler structure $(g, \omega, J)$ corresponding to the K\"ahler potential $F_{\rho}(z, \bar z) = \frac{1}{2} \| z \|^2 + \rho^2 \hat F_\rho( z, \bar z)$ with $\rho > 0$.  The task at hand is to compute its linearization at zero, denoted by $\boldsymbol{L}_\rho$ and express it as a perturbation of $\mathring{\boldsymbol{L}}$ in the form $\boldsymbol{L}_\rho = \mathring{\boldsymbol{L}} + \boldsymbol{P}_\rho$.  Then the dependence of   $\boldsymbol{P}_\rho $ on $\rho$ must be analyzed.  Since $\bPhi_\rho = \Phi_\rho \circ \mathcal X$ and $\mathcal X$ is linear, once again it is best to start with the linearization of $\Phi_\rho$ acting on sections of $J(T \torus)$, denoted by $L_\rho$.  In the computations below, repeated indices are summed, a comma denotes ordinary differentiation and a semi-colon denotes covariant differentiation with respect to the induced metric.

\begin{prop}
	\label{prop:pertlin}
	Let $\Sigma$ be a totally real submanifold of $\C^2$ equipped with the K\"ahler metric $g$.  Let $X$ be a $C^3$ section of $J(T \Sigma)$ and write $X := X^j J E_j$ where $\{ E_1, E_2\}$ is a coordinate basis for the tangent space of $\Sigma$.  Write $L_\rho(X) := \big( L_\rho^{(1)}(X), L_\rho^{(2)}(X) \big)$.  Then
	\begin{align*}
		L_\rho^{(1)}(X) &:= \dif \big( X \elbow \omega \big) \\
		L_\rho^{(2)}(X) &:=  \mathcal E_1 (X) +\mathcal{E}_2(X) 
\end{align*}
where 
\begin{align*}
	\mathcal E_1(X) &:= - (\Delta X^m )_{;m}  - h^{lm}  X^s \bar R_{sl} - h^{lm}h^{qu} H_{q;m}X^s B_{usl}  \\
	&\qquad  + h^{lm}h^{jk}h^{qu}\big( X^s ( B_{ksq}B_{jlu} - B_{ksq} B_{jul} - B_{qsk}B_{jul}) \big)_{;m} \\
	&\qquad -h^{lm}h^{qu}H_{u}\big( X^s B_{qsl}\big)_{;m}+h^{lm}h^{qu}H_{u}\big( X^s B_{lsm}\big)_{;q} \\
	\mathcal{E}_2 (X) &:=  -  h^{lu} h^{qm}(h^{jk} B_{jkl})_{;m} \mathcal C(X)_{uq} - \big( h^{lm} h^{ju} h^{qk} \mathcal C(X)_{uq} B_{jkl}\big)_{;m} \\
	& \qquad - \tfrac{1}{2} h^{lm} h^{jk} h^{sq} B_{jks}  \big( \mathcal C(X)_{ql;m} + \mathcal C(X)_{rm;l} - \mathcal C(X)_{lm;q} \big) \\
	& \qquad + h^{lm} h^{jk} X^s \big(  g  \big(\mathcal{D}((\bar\nabla_{E_k}E_s)^{\perp})  ,  (\bar\nabla_{E_j} E_l )^{\perp} \big) + g \big((\bar\nabla _{E_j} E_k)^{\perp} , \mathcal{D}((\bar\nabla_{E_l}E_s)^{\perp} ) \big)\big)_{;m} \\
	&\qquad - \frac{1}{2}h^{lm}h^{jk}  h^{sq}\omega_{sl} \big( \beta(X)_{qj;k} + \beta(X)_{qk;j} -  \beta(X)_{jk;q} +\mathcal C(X)_{qj;k}+\mathcal C(X)_{qk;j}-\mathcal C(X)_{jk;q}   \big)_{;m} 
\end{align*} 
and also $\mathcal C(X)_{kl} := X^s_{;k}\omega_{sl}+X^s_{;l}\omega_{sk}$, $\beta(X)_{kl} := X^s \big( B_{ksl} + B_{lsk} \big)$, and $\mathcal{D} :T M \rightarrow TM$ is the operator giving the difference between the orthogonal projection of a vector $W \in T_pM$ onto $N_p\Sigma$ and its orthogonal projection onto $J(T_p\Sigma)$.
\end{prop}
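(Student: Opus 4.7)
My plan is to follow the architecture of the proof of Proposition \ref{prop:unpertlin}, retaining the overall structure but carefully tracking the extra terms that arise because the ambient metric is now a general K\"ahler metric $g$ and because $\Sigma$ is assumed only to be totally real (so that the orthogonal complement $N_p\Sigma$ and the complementary subspace $J(T_p\Sigma)$ no longer coincide, and $\omega|_\Sigma$ need not vanish). The formula for $L_\rho^{(1)}$ is immediate from Cartan's formula $\left.\frac{\dif}{\dif t}\mu_{tX}^\ast\omega\right|_{t=0} = \dif(X\elbow\omega) + X\elbow\dif\omega$; since $\omega$ is closed, only the first term survives, exactly as in the Euclidean case.

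For $L_\rho^{(2)}$, I would reproduce the Euclidean setup: pick a one-parameter family $\mu_t$ of diffeomorphisms with $\left.\tfrac{\dif}{\dif t}\mu_t\right|_{t=0}=X$, set $\Sigma^t := \mu_t(\Sigma)$, and choose a local coordinate frame $E_1,E_2$ on $\Sigma$ extended via $\mu_t$ so that $[X,E_k]=0$. The identity
\begin{equation*}
\nabla\cdot H(\Sigma^t) = h^{lm} H_{l,m} - h^{lm}\Gamma^s_{lm}H_s, \qquad H_l = h^{jk}B_{jkl},
\end{equation*}
remains valid with $h_{kl}=g(E_k,E_l)$ and $B_{jkl}=\omega((\bar\nabla_{E_j}E_k)^{\perp},E_l) - \Gamma^s_{jk}\omega(E_s,E_l)$, where $\perp$ now denotes $g$-orthogonal projection onto $N\Sigma$. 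Differentiating at $t=0$ splits the contribution into $(h^{lm})'$, $(\Gamma^s_{lm})'$, and $(B_{jkl})'$ pieces; the first two are computed by formally the same manipulation as in the Euclidean case, with $g$ and $B$ replacing their flat counterparts.

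The essential new work is in $(B_{jkl})'$. Here two effects produce the extra terms in the formula. First, the ambient Riemannian curvature no longer vanishes, so the commutation
\begin{equation*}
\bar\nabla_X \bar\nabla_{E_j} E_k = \bar\nabla_{E_j}\bar\nabla_{E_k} X + \bar R(X,E_j)E_k
\end{equation*}
introduces a curvature correction which, after contracting against $h$ and using $\omega = g(J\cdot,\cdot)$ together with $\bar\nabla J=0$, contributes the Ricci-type term $-h^{lm}X^s\bar R_{sl}$ inside $\mathcal E_1(X)$. Second, since $\Sigma$ is only totally real, the $g$-orthogonal projection of a vector onto $N_p\Sigma$ differs from its projection onto $J(T_p\Sigma)$ by precisely the operator $\mathcal D$. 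Because $X$ takes values in $J(T\Sigma)$ rather than $N\Sigma$, every occurrence of a normal projection of $\bar\nabla X$ or of a second covariant derivative of $X$ inside the variational formula for $B$ must be adjusted by $\mathcal D$, producing the $\mathcal D$-terms in $\mathcal E_2(X)$.

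Finally, because $\omega|_\Sigma$ does not vanish on a totally real submanifold, the tensor $\omega_{sl}:=\omega(E_s,E_l)$ and its variation under $\mu_t$ contribute genuinely new pieces: the combination $\mathcal C(X)_{kl} = X^s_{;k}\omega_{sl}+X^s_{;l}\omega_{sk}$ captures the linearization of $\omega|_{\Sigma^t}$, while $\beta(X)_{kl} = X^s(B_{ksl}+B_{lsk})$ emerges from differentiating the Christoffel-correction term $\Gamma^s_{jk}\omega_{sl}$ in the definition of $B$. Grouping the resulting expressions by separating those that persist in the Lagrangian flat limit (collected as $\mathcal E_1$) from those that vanish when $\omega|_\Sigma\equiv 0$ and $N\Sigma = J(T\Sigma)$ (collected as $\mathcal E_2$) yields the stated formula. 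The hardest part will be purely organizational: the totally-real setting destroys the many cancellations that made the Lagrangian computation in Proposition \ref{prop:unpertlin} compact, so one must patiently apply Leibniz's rule, exploit the symmetries of $B$, and use the parallelism of $\omega$, while never conflating the two distinct complementary subspaces $N\Sigma$ and $J(T\Sigma)$.
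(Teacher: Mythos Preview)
Your plan is essentially the paper's own: the same variational setup with $[X,E_k]=0$ and geodesic normal coordinates at $p_0$, the same decomposition of $\left.\tfrac{\dif}{\dif t}\nabla\cdot H\right|_{t=0}$ into $(h^{lm})'$, $(\Gamma^s_{lm})'$, and $(H_l)'$ contributions, the curvature correction via $\bar\nabla_X\bar\nabla_{E_j}E_k = \bar\nabla_{E_j}\bar\nabla_{E_k}X + \bar R(X,E_j)E_k$, and the use of $\mathcal D$ to mediate between $N\Sigma$ and $J(T\Sigma)$ when rewriting $(\bar\nabla_{E_k}E_s)^\perp$.

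One small correction to your attribution of the auxiliary tensors: in the paper \emph{both} $\mathcal C(X)_{kl}$ and $\beta(X)_{kl}$ arise together as the two pieces of the metric variation,
\[
(h_{kl})' \;=\; X^s_{;k}\omega_{sl}+X^s_{;l}\omega_{sk} \;+\; X^s\bigl(B_{ksl}+B_{lsk}\bigr) \;=\; \mathcal C(X)_{kl}+\beta(X)_{kl},
\]
rather than $\mathcal C(X)$ coming from the linearization of $\omega|_{\Sigma^t}$ and $\beta(X)$ from the Christoffel correction in $B$. This matters for the bookkeeping: $(h^{kl})'$ and $(\Gamma^s_{lm})'$ then carry both $\mathcal C$ and $\beta$ symmetrically, and the separate $-(\Gamma^s_{jk})'\omega_{sl}$ term inside $(B_{jkl})'$ is what produces the final line of $\mathcal E_2$. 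With this adjustment your outline matches the paper's proof.
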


\begin{proof}
The formula for $L_\rho^{(1)}$ follows as before; thus consider $L^{(2)}_\rho(X)$.  In general, let $\Sigma$ be a totally real submanifold of $M$.  Let $X$ be a section of the bundle $J(T\Sigma)$.  Let $\mu_t :M \rightarrow M$ be a one-parameter family of diffeomorphisms with $\frac{\dif}{\dif t} \mu_t \big|_{t=0} = X$ and set $\Sigma^t := \mu_t(\Sigma)$.  Note that although $X$ is always transverse to $\Sigma$, it is not necessarily normal to $\Sigma$ because $\Sigma$ is not necessarily Lagrangian.

Next, choose $E_1,  E_2$ a local coordinate frame for $\Sigma$ coming from geodesic normal coordinates at $p_0\in \Sigma$ in the induced metric $h$ at $t=0$.  Then $JE_1, JE_2$ is basis for $J(T_p \Sigma)$ for $p$ near $p_0$, and $T_p M=T_p \Sigma \oplus J(T_p \Sigma)$ for such $p$. We write $X\big|_{\Sigma} = J(X^jE_j)=X^j JE_j$.  Note that $X$ and $E_k$ commute along $\mu_t$, and since $X$ is transverse to $\Sigma$, we can extend the fields $E_k$ locally using the diffeomorphism $\mu_t$ to a basis for $T_{\mu_t(p)} \Sigma^t$, for $|t|$ small.  In these coordinates the matrix for $h$ on $\Sigma^t$ is the same as that for $\mu_t^*g$ on $T\Sigma$.  The computations below are evaluated at $p_0$ at $t=0$. 

In terms of these coordinates, we have
$$ \nabla\cdot H (\Sigma^t) = h^{lm} h^{jk} B_{jkl;m}$$
where $h_{kl} := g (E_k, E_l)$ is the induced metric, $h^{jk}$ are the components of the inverse of the induced metric, $\nabla$ is the induced connection, and
\begin{align}
	\label{eqn:pertdivmeancurvcomp}
	B_{jkl} := \omega ( (\bar\nabla_{E_j} E_k )^\perp, E_l) = \omega (\bar\nabla_{E_j} E_k , E_l) - \Gamma_{jk}^s \omega( E_s, E_l ),
\end{align}
where $\Gamma_{jk}^s$ are the Christoffel symbols of $h_{jk}$, and $ \bar\nabla$ is the ambient connection of $g$.

The terms in \eqref{eqn:pertdivmeancurvcomp} all depend on $t$.  We will now compute the first derivative of \eqref{eqn:pertdivmeancurvcomp} at $t=0$.  By writing
$$\nabla \cdot H (\Sigma^t) = h^{lm}  H_{l;m} = h^{lm} H_{l,m} - h^{lm}  \Gamma_{lm}^s H_{s} $$
we find 
$$\left. \frac{\dif}{\dif t} (\nabla \cdot H (\Sigma^t) ) \right|_{0}= ( h^{lm} )' H_{l;m}  - h^{lm}  ( \Gamma_{lm}^s )' H_{s} + h^{lm} (( H_{l} )' )_{;m} $$
where once again a prime denotes the value of the time derivative at zero.

We compute the first variation of the metric $h$.  The fact that $\Sigma$ is not assumed to be Lagrangian for $\omega$ influences the outcome of the computation.  We have 
\begin{align*}
	(h_{kl})' &=  g(\bar\nabla_{X} E_k , E_l)+g(E_k, \bar\nabla_{X} E_l) \\
	&= g( \bar\nabla_{E_k} X , E_l)+g( \bar\nabla_{E_l} X, E_k) \\
	&= X^s_{;k} g(JE_s, E_l) + X^s g (J \bar \nabla_{E_k} E_s, E_l) + X^s_{;l} g(JE_s, E_k) + X^s g (J \bar \nabla_{E_l} E_s, E_k) \\
	&= X^s_{;k} \omega_{sl} + X^s_{;l} \omega_{sk} + X^s \big( B_{ksl} + B_{lsk} \big) \, .
\end{align*}
Define $\mathcal C(X)_{kl} := X^s_{;k}\omega_{sl}+X^s_{;l}\omega_{sk}$ and $\beta(X)_{kl} := X^s \big( B_{ksl} + B_{lsk} \big)$.  Note that if $\Sigma$ were Lagrangian with respect to $\omega$ then $\mathcal C(X)$ would vanish identically and $\beta(X)$ would equal $2 X^s B_{kls}$.  It is now straightforward to compute
\begin{align*}
	( h^{kl} )' &= -h^{km} h^{lq}  h_{mq} ' = -  h^{km} h^{lq} \big( \beta(X)_{mq} + \mathcal C(X)_{mq} \big)  \\
	( \Gamma_{lm}^k )' &= \frac{1}{2} h^{kq} \big( \beta(X)_{ql;m} + \beta(X)_{qm;l} -  \beta(X)_{lm;q} +\mathcal C(X)_{ql;m}+\mathcal C(X)_{qm;l}-\mathcal C(X)_{lm;q} \big) \, .
\end{align*}
Next we have
\begin{align*}
	(H_{l})'  &= \left. \frac{\dif}{\dif t} \big( h^{jk} B_{jkl} \big) \right|_{t=0} \\
	&=  -  h^{jm} h^{kq} \big( \beta(X)_{mq} + \mathcal C(X)_{mq} \big) B_{jkl} + h^{jk} (B_{jkl})' \, .
\end{align*}
We now use the facts that $\omega$ and $J$ are parallel, that $X$ and $E_k$ commute along $\mu_t$, and $\Gamma_{jk}^s(p_0)$ vanishes at $t=0$ to deduce
\begin{align*}
	(B_{jkl})' &= \left. \frac{\dif}{\dif t}\omega \big( ( \bar\nabla_{E_j} E_k )^\perp , E_l \big) \right|_{t=0}  \\
	&= \omega \big( \bar\nabla_X \bar\nabla_{E_j} E_k , E_l \big) + \omega \big(\bar\nabla _{E_j} E_k , \bar\nabla_X E_l \big) - ( \Gamma_{jk}^s )' \omega_{sl} \\
	&= \omega \big( \bar\nabla_{E_j} \bar\nabla_{E_k} X, E_l  \big) + \omega \big(\bar\nabla _{E_j} E_k , \bar\nabla_{ E_l} X \big)+\omega(\bar R(E_j,X) E_k,E_l) - ( \Gamma_{jk}^s )' \omega_{sl} \\
	&= - E_j  \big[g \big( \bar\nabla_{E_k} (X^{s} E_s ), E_l  \big) \big]+ g  \big(  \bar\nabla_{E_k} (X^{s}E_s),  \bar\nabla_{E_j} E_l  \big) +  g \big( \bar\nabla _{E_j} E_k ,\bar \nabla_{ E_l} (X^{s} E_s) \big) \\
	&\qquad - X^s \bar R_{jskl} -( \Gamma_{jk}^s )' \omega_{sl} \\
	&= - E_j  \big[g \big( X^s_{;k} E_s +(\bar\nabla_{E_k}( X^{s} E_s  ))^{\perp} , E_l  \big)\big] + g  \big(X^s_{;k} E_s +(\bar\nabla_{E_k}( X^{s} E_s ))^{\perp}  , \bar \nabla_{E_j} E_l  \big) \\
	&\qquad + g \big(\bar\nabla _{E_j} E_k , X^s_{;l} E_s +(\bar\nabla_{E_l}( X^{s} E_s ))^{\perp}  \big)  - X^s \bar R_{jskl} - ( \Gamma_{jk}^s )' \omega_{sl}
\end{align*}
where $\bar R_{jskl}$ are the components of the ambient curvature tensor.  Now using the fact that we've arranged to have $\bar\nabla_{E_j} E_k$ orthogonal to $\Sigma$ at $p_0$ at $t=0$, we can deduce
\begin{align}
	\label{eqn:bprime}
	(B_{jkl})' &= - E_j  \big[g \big( X^s_{;k} E_s  , E_l  \big)\big] + g  \big((\bar\nabla_{E_k}(X^sE_s))^{\perp}  ,  \bar\nabla_{E_j} E_l  \big) \notag \\
	&\qquad + g \big(\bar\nabla _{E_j} E_k , (\bar\nabla_{E_l}(X^{s} E_s ))^{\perp}  \big) - X^s \bar R_{jskl} -  ( \Gamma_{jk}^s )' \omega_{sl} \notag \\
	&\begin{aligned} 
		&= -X_{l;kj} + X^s g  \big((\bar\nabla_{E_k}E_s)^{\perp}  ,  (\bar\nabla_{E_j} E_l)^{\perp}  \big) + X^s g \big( (\bar\nabla _{E_j} E_k)^{\perp} , (\bar\nabla_{E_l}E_s)^{\perp}  \big) \\ 
		&\qquad  - X^s \bar R_{jskl}  -  ( \Gamma_{jk}^s )' \omega_{sl} \, . 
	\end{aligned}
\end{align}
	
To deal with the $(\bar\nabla _{E_j} E_k)^{\perp}$ terms we introduce the operator $\mathcal{D}$ on $T_p M$ which is the difference between the orthogonal projection onto $N_p\Sigma$ and the orthogonal projection onto $J(T_p\Sigma)$.  Now, for any $W\in N_p\Sigma$,  we can write 
$$W=h^{ij}g(W, JE_j)JE_i + \mathcal{D}(W)= -h^{ij}\omega (W, E_j)JE_i+\mathcal{D}(W).$$ 
where we've used the fact that $J$ is an isometry.  Consequently \eqref{eqn:bprime} becomes
\begin{align*}
	(B_{jkl})' &=- X_{l;kj}+X^qh^{uq}B_{ksq} B_{jlu} + X^s h^{uq} B_{lsq}B_{jku}\\
	& \qquad +X^s g  \big(\mathcal{D}((\bar\nabla_{E_k} E_s)^{\perp})  ,  (\bar\nabla_{E_j} E_l )^{\perp} \big) + X^s g \big((\bar\nabla _{E_j} E_k)^{\perp} , \mathcal{D}((\bar\nabla_{E_l}E_s)^{\perp} ) \big)\\
	&\qquad  - X^s \bar R_{jskl} - ( \Gamma_{jk}^s )' \omega_{sl}\, .
\end{align*}
	
We have now computed all the separate constituents of $L^{(2)}_\rho(X)$.  It remains only to put everything together.  We find
\begin{align*}
	L^{(2)}_\rho(X)  &= ( h^{lm})' H_{l;m}  - h^{lm}  (\Gamma_{lm}^s)' H_{s} + h^{lm} \big( ( H_{l})'  \big)_{;m}\\
	&=  - h^{lu} h^{qm} (h^{jk} B_{jkl})_{;m} \big (\beta(X)_{uq}+ \mathcal C(X)_{uq})\\
	&\qquad - \tfrac{1}{2} h^{lm} h^{jk} h^{sq} B_{jks} \big( \beta(X)_{ql;m} + \beta(X)_{qm;l} - \beta(X)_{lm;q}\big)\\
	&\qquad -\tfrac{1}{2} h^{lm} h^{jk} h^{sq} B_{jks}  \big( \mathcal C(X)_{ql;m} + \mathcal C(X)_{qm;l} - \mathcal C(X)_{lm;q} \big)\\
	&\qquad - \big( h^{lm} h^{ju} h^{qk} B_{jkl} (\beta(X)_{uq}+ \mathcal C(X)_{uq}) \big)_{;m}\\
	&\qquad + h^{lm} h^{jk} \big( - X_{l;kj}+ X^q h^{us} B_{kqs} B_{jlu} + X^s h^{uq} B_{lsq} B_{jku}\big)_{;m}\\
	& \qquad + h^{lm} h^{jk} X^s \big( g  \big(\mathcal{D}((\bar\nabla_{E_k}E_s)^{\perp})  ,  (\bar\nabla_{E_j} E_l )^{\perp} \big) + g \big((\bar\nabla _{E_j} E_k)^{\perp} , \mathcal{D}((\bar\nabla_{E_l}E_s)^{\perp} ) \big)\big)_{;m}\\
	&\qquad - h^{lm} h^{jk}\big( X^s \bar R_{jskl}  + ( \Gamma_{jk}^s )' \omega_{sl} \big)_{;m} \\
	&=  \mathcal E_1 (X) +\mathcal{E}_2(X) 
\end{align*}
where $\mathcal E_1 (X)$ and $\mathcal{E}_2(X)$ are as in the statement of the proposition. In attaining these expressions, we have expanded $\beta(X)_{ij} = X^s ( B_{isj} + B_{jsi} )$ and we have denoted  the components of the ambient Ricci tensor by $\bar R_{sl}$.  The point of arranging the outcome of the calculation in this way is because the term $\mathcal E_1(X)$ has the same form as the linearization of the Hamiltonian stationary Lagrangian differential operator at a Lagrangian submanifold while the term $\mathcal{E}_2(X)$ vanishes at a Lagrangian submanifold.
\end{proof} 

The next step in the calculation is to determine the decomposition $L^{(s)}_\rho (X) = \mathring L^{(s)}(X) + P^{(s)}_\rho(X)$ for $s = 1, 2$.  Of course, $L^{(1)}_\rho(X) = \dif (X \elbow \omega)$ according to the usual Poincar\'e formula and so 
$$P_{\rho}^{(1)}(X) = \dif(X\elbow \omega)- \dif(X\elbow \mathring\omega)= \dif(X\elbow (\omega-\mathring\omega)) \, .$$
For $P_\rho^{(2)}$, observe that $\mathcal E_1(X)$ has the same form as $\mathring L^{(2)}(X)$ and $\mathcal{E}_2(X)$ vanishes when $\rho = 0$.  Thus formally we can decompose
$$P^{(2)}_\rho(X) = \big(\mathcal E_1(X) - \mathring L^{(2)}(X) \big) + \mathcal E_2(X) \, .$$ 
We will not determine the precise form of the operator $\mathcal E_1(X) - \mathring L^{(2)}(X) $ since these details will not be needed in the sequel.  

\begin{cor}
	\label{cor:differenceop}
	The components of the operator $P_\rho$ are
	\begin{align*}
		P_{\rho}^{(1)}(X) &:= \dif(X\elbow (\omega-\mathring\omega)) \\
		P^{(2)}_\rho(X) &:= \big(\mathcal E_1(X) - \mathring L^{(2)}(X) \big) + \mathcal E_2(X)  
	\end{align*}
	with notation as in Proposition \ref{prop:pertlin}.
\end{cor}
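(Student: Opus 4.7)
The statement is essentially a formal bookkeeping decomposition: having computed both linearizations $L_\rho$ (Proposition \ref{prop:pertlin}) and $\mathring L$ (Proposition \ref{prop:unpertlin}), the plan is simply to subtract them component-by-component and record the result in a form that will be convenient for the $\rho$-dependent estimates in the sequel. No new geometry enters; the entire content of the corollary is a rearrangement of formulas that have already been established.

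For the first component, I would start from $L_\rho^{(1)}(X) = \dif(X \elbow \omega)$ and $\mathring L^{(1)}(X) = \dif(X \elbow \mathring\omega)$, both of which follow from the standard Cartan formula $\mathcal L_X \omega = \dif(X\elbow\omega) + X \elbow \dif\omega$ applied to a closed form. Linearity of the exterior derivative and the interior product in the form-slot then gives $P_\rho^{(1)}(X) = \dif\bigl(X\elbow(\omega - \mathring\omega)\bigr)$ immediately. This is the cleanest possible form because it isolates the $\rho$-dependence into the single ambient tensor $\omega - \mathring\omega$, which by the discussion in Section \ref{sec:kaehlergeom} is of order $\mathcal O(\rho^2)$ on bounded neighborhoods of the origin in the rescaled coordinates.

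For the second component, the plan is to take the formula $L_\rho^{(2)}(X) = \mathcal E_1(X) + \mathcal E_2(X)$ from Proposition \ref{prop:pertlin} and simply subtract $\mathring L^{(2)}(X)$, grouping the result as $\bigl(\mathcal E_1(X) - \mathring L^{(2)}(X)\bigr) + \mathcal E_2(X)$. The reason for this particular grouping, rather than any other, is structural rather than computational: $\mathcal E_1$ was designed in Proposition \ref{prop:pertlin} to be the collection of terms whose algebraic form mirrors that of $\mathring L^{(2)}$, namely those built from the induced metric, the symplectic second fundamental form, and the symplectic mean curvature, together with a curvature term $-h^{lm} X^s \bar R_{sl}$. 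The difference $\mathcal E_1(X) - \mathring L^{(2)}(X)$ therefore contains only contributions from the deviation of $(h, B, H, \bar R)$ from their Euclidean counterparts $(\mathring h, \mathring B, \mathring H, 0)$, and is manifestly small as $\rho \to 0$. The remaining term $\mathcal E_2(X)$ collects all expressions involving $\mathcal C(X)_{kl} = X^s_{;k}\omega_{sl} + X^s_{;l}\omega_{sk}$ and $\omega_{sl}$ restricted to $\Sigma_r$; these vanish identically when $\Sigma_r$ is Lagrangian with respect to $\omega$, so $\mathcal E_2$ captures precisely the non-Lagrangian defect of the approximate solution and is again of controlled order in $\rho$.

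There is no genuine obstacle here; the only care required is to verify that the operators $\mathcal D$, $\mathcal C$, and $\beta$ appearing in $\mathcal E_2$ all vanish on a Lagrangian submanifold (so that the split is natural) and that every $\rho$-dependent tensor entering $\mathcal E_1 - \mathring L^{(2)}$ really is of order $\mathcal O(\rho^2)$ in the scaled coordinates. The explicit form of $\mathcal E_1 - \mathring L^{(2)}$ is deliberately not worked out, as the author notes, because only the size estimate of $\boldsymbol P_\rho = P_\rho \circ \mathcal X$ will be used in Section \ref{sec:anop} onward; what matters is the clean identification of the three sources of the perturbation (the symplectic defect, the metric/curvature defect, and the non-Lagrangian defect) so that each can be estimated separately.
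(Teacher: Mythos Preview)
Your proposal is correct and matches the paper's approach exactly: the corollary is purely formal bookkeeping, obtained by subtracting the two already-computed linearizations and grouping the second component as $(\mathcal E_1 - \mathring L^{(2)}) + \mathcal E_2$ precisely because $\mathcal E_1$ mirrors $\mathring L^{(2)}$ in form while $\mathcal E_2$ vanishes at a Lagrangian. One small slip: $\beta(X)_{kl} = X^s(B_{ksl} + B_{lsk})$ does \emph{not} vanish on a Lagrangian submanifold (it becomes $2X^s B_{kls}$ by full symmetry); rather, the $\beta$-terms in $\mathcal E_2$ vanish there because they are multiplied by the pulled-back $\omega_{sl}$.
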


\noindent We now obtain a corresponding decomposition $\boldsymbol{L}_\rho^{(s)} := \mathring{\boldsymbol{L}}^{(s)} +  \boldsymbol{P}_\rho^{(s)}$ where $\boldsymbol{P}_\rho^{(s)} := P_\rho^{(s)} \circ \mathcal X$.

\subsection{Estimates for the Perturbed Linearization}

The norms that will be used to estimate the various quantities involved in the proof of the Main Theorem will be the standard $C^{k,\alpha}$ norms; these will be taken with respect to the background metric $\mathring g$ when the quantity being estimated is defined in $\C^2$ and with respect to the induced metric $\mathring h$ when the quantity being estimated is defined on the submanifold $\Sigma_r$.  Note that these norms are equivalent to those defined by the metrics $g$ and $h$ and coincide with the norms used in the statement of the Main Theorem when the re-scaling of Section \ref{sec:scale} is reversed.  Begin with the following lemma.

\begin{lemma}  
\label{lemma:prelimest}

Let $\Sigma$ be a totally real submanifold of $\C^2$ equipped with the K\"ahler metric $g$.  Fix $\alpha \in (0,1)$ and $k \in \N$.  There is a constant $C$ independent of $\rho$ so that for all $X \in \Gamma (J(T\Sigma))$ and $W\in \Gamma(N\Sigma)$ the following estimates hold:
\begin{equation*}
	\begin{aligned}
		\|g-\mathring g\|_{C^{k,\alpha}(M)} &\leq  C\rho^2 \\ 
		\|\omega-\mathring \omega\|_{C^{k,\alpha}(M)} &\leq   C \rho^2\\
		\|B-\mathring B\|_{C^{k,\alpha}(\Sigma_r)} &\leq  C\rho^2 \\
		 \|H -\mathring H\|_{C^{k,\alpha}(\Sigma_r)} &\leq  C\rho^2\\
	 \end{aligned} \qquad \qquad
	 \begin{aligned}
		\| \nabla \cdot X -\mathring{\nabla} \cdot X \|_{C^{k,\alpha}(\Sigma_r)} &\leq  C \rho^2 \|X\|_{C^{k,\alpha}(\Sigma_r)}\\
		 \|\mathcal{C}(X)\|_{C^{k,\alpha}(\Sigma_r)}&\leq  C\rho^2 \|X\|_{C^{k+1,\alpha}(\Sigma_r)} \\
		 \|\mathcal{D}(W)\|_{C^{k,\alpha}(\Sigma_r)}&\leq  C\rho^2 \|W\|_{C^{k,\alpha}(\Sigma_r)} \\
		 \|\mathcal{E}_2(X)\|_{C^{k,\alpha}(\Sigma_r)}&\leq C \rho^2 \|X\|_{C^{k+2,\alpha}(\Sigma_r)} \, .
	\end{aligned}
\end{equation*}
Furthermore, the operator $\mathcal D$ vanishes if $\Sigma$ is Lagrangian.
\end{lemma}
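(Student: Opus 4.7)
The plan is to read every one of the eight estimates off the single structural fact established in Section \ref{sec:scale}: in the rescaled coordinates the K\"ahler potential takes the form $F_\rho(z,\bar z) = \tfrac{1}{2}\|z\|^2 + \rho^2 \hat F_\rho(z,\bar z)$, where $\hat F_\rho$ vanishes to order four at the origin and is bounded in $C^{m,\alpha}$ uniformly in $\rho$ on a fixed neighbourhood of the origin (for every $m$). Consequently, every quantity of K\"ahler origin decomposes as its Euclidean analogue plus a remainder built algebraically from derivatives of $\hat F_\rho$, and therefore carrying an explicit factor $\rho^2$.

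Applying the explicit formulas from Section \ref{sec:kaehlergeom}, $g - \mathring g$ and $\omega - \mathring \omega$ are each $\rho^2$ times polynomial expressions in second derivatives of $\hat F_\rho$, so the first two estimates are immediate. Next, $B$ is a polynomial expression in $g$, $\omega$, and the ambient Christoffel symbols of $g$ (which are rational expressions in $g$ and its first derivatives), and $H$ is a trace of $B$ with $h^{-1}$. Writing each quantity as (Euclidean analogue) $+$ (remainder built from $g - \mathring g$ and $\omega - \mathring \omega$), one obtains $\|B - \mathring B\|_{C^{k,\alpha}(\Sigma_r)} \leq C\rho^2$ and $\|H - \mathring H\|_{C^{k,\alpha}(\Sigma_r)} \leq C\rho^2$. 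The divergence estimate follows similarly: $\nabla \cdot X - \mathring \nabla \cdot X$ involves only $h^{-1} - \mathring h^{-1}$ contracted against $X$ and first derivatives of $X$, plus the difference of Christoffel symbols $\Gamma - \mathring \Gamma$, each controlled by $\|h - \mathring h\|_{C^{k+1,\alpha}} \leq C\rho^2$.

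The operators $\mathcal C$ and $\mathcal D$ require the key observation that by Lemma \ref{lemma:torus} the torus $\Sigma_r$ is Lagrangian with respect to $\mathring \omega$, so $\mathring \omega(E_s, E_l)\big|_{\Sigma_r} \equiv 0$. Hence the components $\omega_{sl}$ appearing in $\mathcal C(X)_{kl} = X^s_{;k}\omega_{sl} + X^s_{;l}\omega_{sk}$ are actually $(\omega - \mathring \omega)(E_s, E_l)\big|_{\Sigma_r}$, and the second estimate of the lemma yields $\|\mathcal C(X)\|_{C^{k,\alpha}} \leq C\rho^2 \|X\|_{C^{k+1,\alpha}}$. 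For $\mathcal D$: if $\Sigma$ is Lagrangian with respect to $\omega$, then $N\Sigma = J(T\Sigma)$ pointwise, and the two projections in the definition of $\mathcal D$ coincide, so $\mathcal D \equiv 0$. In general, writing $W \in N\Sigma$ in the form $W = -h^{ij}\omega(W, E_j)JE_i + \mathcal D(W)$ as in the proof of Proposition \ref{prop:pertlin}, comparing against the analogous Euclidean decomposition (in which the $\omega$-term forces $\mathcal D = 0$), and using $\|g - \mathring g\| = \mathcal O(\rho^2)$ together with $\|\omega - \mathring \omega\| = \mathcal O(\rho^2)$ and the fact that $\mathring \omega\big|_{\Sigma_r} = 0$ yields the desired pointwise bound $\|\mathcal D(W)\|_{C^{k,\alpha}} \leq C\rho^2 \|W\|_{C^{k,\alpha}}$.

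Finally, the estimate on $\mathcal E_2$ is obtained by examining the formula in Proposition \ref{prop:pertlin} summand by summand: every term contains a factor of either $\mathcal C(X)$, or $\mathcal D$ applied to a normal field built from $X$ via $\bar\nabla$, or an explicit $\omega_{sl}$ (which, restricted to $\Sigma_r$, is $\mathcal O(\rho^2)$ as noted). Each of these factors has already been estimated by $C\rho^2$ times an appropriate norm of $X$; the remaining factors $h^{-1}$, $B$, $H$ and their covariant derivatives are bounded uniformly in $\rho$ by the earlier estimates of the lemma. A derivative count shows that the worst summand carries two covariant derivatives of $X$ beyond the $\mathcal C$ or $\mathcal D$ factor, so collecting everything gives $\|\mathcal E_2(X)\|_{C^{k,\alpha}} \leq C\rho^2 \|X\|_{C^{k+2,\alpha}}$. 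The principal obstacle is not conceptual but bookkeeping: one must track derivative orders carefully in $\mathcal E_2$ and verify that the constants really are $\rho$-independent, which in turn rests on the uniform $C^{m,\alpha}$ bounds for $\hat F_\rho$ on the fixed rescaled neighbourhood.
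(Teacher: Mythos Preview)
Your proposal is correct and follows essentially the same approach as the paper's proof: both derive every estimate from the single structural fact that $F_\rho = \tfrac12\|z\|^2 + \rho^2 \hat F_\rho$, and both rely on the Lagrangian property of $\Sigma_r$ with respect to $\mathring\omega$ to handle $\mathcal C$, $\mathcal D$, and $\mathcal E_2$. Your treatment of $\mathcal E_2$ is actually more explicit than the paper's (which simply says the $\mathcal D$ estimate ``together with the above estimates, will also yield the estimate of $\mathcal E$''), while for $\mathcal D$ the paper spells out the decomposition $W = \mathring W^\perp + \mathring W^\|$ with respect to $\mathring g$ a bit more concretely than you do; but these are presentational differences, not different arguments.
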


\begin{proof}  The estimates mostly follow from the estimate of the K\"{a}hler potential $F_{\rho}(z, \bar z) := \frac{1}{2} \| z \|^2 + \rho^2 \hat F_\rho (z, \bar z)$, where $\hat F_\rho(z, \bar z) := \rho^{-4} \hat F(\rho z, \rho \bar z )$.  Recall that for any multi-index $\alpha$ the derivative $\partial^{\alpha} \hat F(\zeta, \bar \zeta)$ is $\mathcal{O}(\|\zeta\|^{4-\alpha})$ for $|\alpha|\leq 4$, and $\mathcal{O}(1)$ for $|\alpha|>4$.   This immediately gives the first two estimates.  The estimate on the symplectic second fundamental form comes from the following (and then immediately implies the estimate on the mean curvature one-form): 
\begin{align*}
	B(X,Y,Z)-\mathring B(X,Y,Z) &= \omega((\nabla_XY)^{\perp}, Z) - \mathring \omega ((\mathring \nabla _X Y)^{\perp}, Z) \, ,
\end{align*} 
where $(\nabla_X Y)^{\perp}= \nabla_X Y - h^{ij} g(\nabla_X Y, E_j) E_i$ and $(\mathring\nabla_X Y)^{\perp}= \mathring \nabla_X Y - \mathring h^{ij}\mathring g(\mathring \nabla_X Y, E_j)E_i$.  The above estimate of $(g-\mathring g)$ yields the analogous estimate of $\nabla - \mathring \nabla$, which together with the equation above then yields the estimate of $B-\mathring B$, as well as the estimate on the divergence.  

We now estimate $\mathcal{D}$, which, together with the above estimates, will also yield the estimate of $\mathcal{E}$, and thus complete the proof.  Let $W\in N_p \Sigma$ be a unit vector.  Recall from above that 
$$\mathcal{D}(W)=W-h^{ij}g(W, JE_j)JE_i = W+h^{ij}\omega (W, E_j)JE_i.$$
If we use the orthogonal decomposition of $W$ with respect to the metric $\mathring g$, denoting it as $W= \mathring W^\perp + \mathring W^{\|}$, then since $g(W, E_j)=0$, we have immediately $\mathring g(W, E_j)= \mathcal{O}(\rho^2)$.  Thus $\mathring W^\| = O(\rho^2)$.  Furthermore, since $\Sigma$ is Lagrangian for $\mathring \omega$, then $\mathring W^\perp = -\mathring h^{ij} \mathring \omega(W^\perp, E_j) J E_i= -\mathring h^{ij} \mathring \omega(W, E_j)J E_i.$  Thus $\mathcal{D}(W)-\mathring W^\| = \mathring W^\perp + h^{ij} \omega(W,E_j) JE_i = \mathcal{O} (\rho^2)$. 
\end{proof}

Based on these elementary estimates, we have the following estimates of $P_{\rho}$ and $\boldsymbol{P}_\rho$ on a totally real submanifold $\Sigma$. 

\begin{prop} Let $\Sigma$ be a totally real submanifold of $\C^2$ equipped with the K\"ahler metric $g$. Fix $k \in \N$ and $\alpha \in (0,1)$.  There is a constant $C$ independent of $\rho$ so that 
\begin{align*} 
	\|P_{\rho}^{(1)}(X)\|_{C^{k,\alpha}} &\leq  C \rho^{2} \|X\|_{C^{k+1,\alpha}}\\
	\|P_{\rho}^{(2)}(X)\|_{C^{k,\alpha}} &\leq  C \rho^{2} \|X\|_{C^{k+2,\alpha}}\\
	\|\boldsymbol{P}_{\rho}^{(1)}(u, v)\|_{C^{k,\alpha}} &\leq  C \rho^{2} \|(u, v)\|_{C^{k+1,\alpha} \times C^{k+1,\alpha} }\\
	\|\boldsymbol{P}_{\rho}^{(2)}(u, v)\|_{C^{k,\alpha}} &\leq  C \rho^{2} \|(u, v)\|_{C^{k+2,\alpha} \times C^{k+2,\alpha}} \, .
\end{align*}
\end{prop}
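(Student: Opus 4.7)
The plan is to treat the two components of $P_\rho$ and $\boldsymbol{P}_\rho$ separately, invoking only the preliminary estimates of Lemma \ref{lemma:prelimest} together with the explicit formulas from Corollary \ref{cor:differenceop} and Proposition \ref{prop:pertlin}. The entire argument is bookkeeping: one reads off the structure of each summand, matches coefficients against their Euclidean counterparts, applies the product rule in $C^{k,\alpha}$, and counts derivatives on $X$.

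For $P_\rho^{(1)}(X) = \dif(X \elbow (\omega - \mathring{\omega}))$, the estimate is immediate. Since $\omega - \mathring{\omega}$ has coefficients bounded in $C^{k+1,\alpha}$ by $C\rho^2$ by Lemma \ref{lemma:prelimest}, and the operator $X \mapsto \dif(X \elbow (\omega-\mathring\omega))$ involves at most one derivative of $X$, the product/Leibniz rule in $C^{k,\alpha}$ yields the desired bound $\|P_\rho^{(1)}(X)\|_{C^{k,\alpha}} \le C\rho^2\|X\|_{C^{k+1,\alpha}}$.

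For $P_\rho^{(2)}(X)$, use the decomposition $P_\rho^{(2)}(X) = \big(\mathcal{E}_1(X) - \mathring{L}^{(2)}(X)\big) + \mathcal{E}_2(X)$. The $\mathcal{E}_2(X)$ piece is bounded directly by the last inequality of Lemma \ref{lemma:prelimest}. For the difference $\mathcal{E}_1(X) - \mathring{L}^{(2)}(X)$, I would compare the formulas termwise: each summand of $\mathcal{E}_1(X)$ is built from the tensors $h^{ij}$, $B_{ijk}$, $H_i$, $\bar R_{sl}$ and covariant derivatives of $X$ up to order three, while the corresponding summand of $\mathring{L}^{(2)}(X)$ is built from $\mathring h^{ij}$, $\mathring B_{ijk}$, $\mathring H_i$ (and no curvature term). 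By Lemma \ref{lemma:prelimest}, each coefficient tensor differs from its Euclidean analogue by $\mathcal{O}(\rho^2)$ in $C^{k+1,\alpha}$; the ambient Ricci term $\bar R_{sl}$ itself is $\mathcal{O}(\rho^2)$ as the curvature of the rescaled metric. Applying the product rule in $C^{k,\alpha}$ and observing that the highest-order contribution, namely $-(\Delta X^m)_{;m} - (-(\mathring\Delta X^m)_{;m})$, is the divergence of a Laplacian whose coefficients differ by $\mathcal{O}(\rho^2)$, acting on three derivatives of $X$, one obtains $\|\mathcal{E}_1(X) - \mathring{L}^{(2)}(X)\|_{C^{k,\alpha}} \le C\rho^2 \|X\|_{C^{k+2,\alpha}}$. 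Combining these yields the second estimate (note the third-order factor $X^m_{;kj}$ appearing in the difference only requires $C^{k+2,\alpha}$ control of $X$ because the outer divergence takes one more derivative, which is then absorbed into the $\Delta X$ and yields exactly two extra derivatives).

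Finally, the estimates for $\boldsymbol{P}_\rho^{(s)} = P_\rho^{(s)} \circ \mathcal{X}$ follow by composing with the operator $\mathcal{X}$ from \eqref{eqn:ansatz}, which is a linear first-order differential operator with constant coefficients depending only on $r$. This shifts the derivative count by one, giving the last two estimates from the first two. The main obstacle is not conceptual but purely organizational: the formula for $\mathcal{E}_1$ has many summands, and one must verify carefully in each that the coefficient differences (metric, second fundamental form, mean curvature, Christoffel symbols, and connection) are uniformly $\mathcal{O}(\rho^2)$ in the requisite Hölder norm, while simultaneously keeping track of how many derivatives fall on $X$. Once each term is identified as either a zeroth-, first-, second-, or third-order coefficient difference acting on an appropriate derivative of $X$, the product rule in $C^{k,\alpha}$ closes the estimate with a constant $C$ independent of $\rho$ (by the uniform boundedness of $\hat F_\rho$ and its derivatives in the rescaled chart of Section \ref{sec:scale}).
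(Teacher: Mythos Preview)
Your approach is exactly what the paper intends; the proposition is stated there without proof, as an immediate consequence of Lemma~\ref{lemma:prelimest} together with the explicit formulas of Proposition~\ref{prop:pertlin} and Corollary~\ref{cor:differenceop}.

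One caveat: the parenthetical in which you try to argue that $P_\rho^{(2)}$ costs only $C^{k+2,\alpha}$ does not hold up. The difference $(\Delta X^m)_{;m}-(\mathring\Delta X^m)_{;m}$ genuinely contains the third-order contribution $(h^{jk}-\mathring h^{jk})\,X^m_{,jkm}$ with an $\mathcal O(\rho^2)$ coefficient, so the honest bound requires $\|X\|_{C^{k+3,\alpha}}$; and since $\mathcal X$ is a first-order operator, composing should raise the $\boldsymbol P_\rho$ exponents by one relative to those for $P_\rho$, not leave them equal. This looks like a minor slip in the stated exponents rather than a defect in your method, and it is harmless for the rest of the paper since $(u_\rho,v_\rho)$ is in any case controlled in $C^{4,\alpha}$.
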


\section{Solving the Hamiltonian Stationary Lagrangian PDE}

\subsection{Outline}
\label{sec:functional}

In this final section of the paper, the equation $\bPhi_\rho(u,v) = (0,0)$ will be solved for all $\rho$ sufficiently small using a perturbative technique.  An initial difficulty that must be overcome is that it is not possible to find a suitable inverse for the linearized operator $\boldsymbol{L}_\rho := \Dif_{(0,0)} \bPhi_\rho$ with $\rho$-independent norm because the operator $\mathring {\boldsymbol{L}} := \Dif_ {(0,0)} \mathring \bPhi$ has a non-trivial, six-dimensional kernel and fails to be surjective since its adjoint has a seven-dimensional kernel.  This fact makes a three-step approach for solving $\bPhi_\rho(u,v) = (0,0)$ necessary.

\paragraph*{Step 1.} The first step is to solve a \emph{projected} problem wherein the difficulties engendered by the kernel and co-kernel of $\mathring {\boldsymbol{L}}$ are avoided.  Let $\mathcal K$ be the kernel of $\mathring {\boldsymbol{L}}$ and let $\mathcal K^\ast$ be the kernel of $\mathring {\boldsymbol{L}}^\ast$.  Let 
$$\pi: C^{2, \alpha} (\dif \Lambda^1 (\Sigma_r)) \times C^{0, \alpha} (\Sigma_r) \rightarrow \Big( C^{2, \alpha} ( \dif \Lambda^1(\Sigma_r)) \times C^{0, \alpha}(\Sigma_r)\Big) \cap [\mathcal K^\ast ]^\perp$$
be the $L^2$-orthogonal projection onto $[\mathcal K^\ast ]^\perp$ with respect to the volume measure induced from the Euclidean ambient metric and consider the operator 
$$\pi \circ \bPhi_\rho \big|_{\mathcal K^\perp} : \Big( C_0^{4,\alpha}(\Sigma_r) \times C_0^{4, \alpha}(\Sigma_r) \Big) \cap \mathcal K^\perp \rightarrow  \Big( C^{2, \alpha} ( \dif \Lambda^1(\Sigma_r)) \times C^{0, \alpha}(\Sigma_r)\Big) \cap [\mathcal K^\ast ]^\perp \, .$$
The first step is thus to solve $\pi \circ \bPhi_\rho \big|_{\mathcal K^\perp} (u,v) = (0,0)$.  The linearization of this new operator is $\pi \circ \boldsymbol{L}_\rho \big|_{\mathcal K^\perp} $ which is by definition invertible at $\rho = 0$.  This operator remains invertible for sufficiently small $\rho>0$, and it will be shown below that a solution of the non-linear problem 
$$\pi \circ \bPhi_\rho \big|_{\mathcal K^\perp} (u,v) = (0,0)$$
can be found.  We will denote the solution by $(u_\rho, v_\rho)$ and let $\tilde \Sigma_r(\mathcal U_p) := \mu_{\mathcal X (u_\rho, v_\rho)} (\Sigma_r(\mathcal U_p))$ be the perturbed submanifold generated by this solution; we will abbreviate this by $\tilde \Sigma_r$ when there is no cause for confusion. 

\paragraph*{Step 2.} The previous step shows that a solution $(u, v) := (u_\rho, v_\rho)$ of the projected problem on $\Sigma_r$ can always be found so long as $\rho$ is sufficiently small.  One should realize that the solution $(u_\rho, v_\rho)$ that has been found depends implicitly on the point $p \in M$ and the choice of unitary frame $\mathcal U_p$ at $p$ out of which $\Sigma_r$ has been constructed.  Moreover, this  dependence is smooth as a standard consequence of the fixed-point argument used to find $(u_\rho, v_\rho)$.  The solution is such that $\bPhi_\rho (u_\rho, v_\rho)$ is an \emph{a priori} non-trivial but small quantity that belongs to $\mathcal K^\ast$.  

In the second step of the proof of the Main Theorem, it will be shown that when an \emph{existence condition} is satisfied at the point $p \in M$, there exists $\mathcal U_p$ so that  $\bPhi_\rho (u_\rho, v_\rho)$ vanishes except for a component in the space $\mathrm{span}_{\R} \{ (0, 1) \}$.  We set this up as follows.  First, write  $\mathcal K^\ast = \mathrm{span}_{\R} \{ (0, 1) \} \oplus \mathcal K^\ast_0$ where $\mathcal K^\ast_0 := \mathrm{span}_{\R}  \{ f^{(1)} v^{(1)}, \ldots, f^{(6)} v^{(6)}\}$ and the $v^{(i)}$ are constant vectors determined in Corollary \ref{cor:cokernel}, normalized so that the second component $v_2^{(i)}=1$.  Therefore
$$\bPhi(u_\rho, v_\rho) = a (0,1) + \sum_{j=1}^6 b_j f^{(j)} v^{(j)} \qquad \mbox{for some } a, b_1, \ldots, b_6 \in \R$$
Now define a smooth mapping on the unitary 2-frame bundle $\mathbf{U}_2 (M)$ over $M$, given by
\begin{gather*}
	G_\rho : \mathbf{U}_2(M) \rightarrow \R^6 \\
	G_\rho (  \mathcal U_p ) := \big( I^{(1)}_\rho , \ldots, I^{(6)}_\rho \big)  
\end{gather*}
where
\begin{equation}
	\label{eqn:cokerproj}
	I_\rho^{(i)} (  \mathcal U_p ) := \int_{\Sigma_r }  \big( f^{(i)} - c^{(i)} \big) v^{(i)} \cdot \bPhi(u_\rho, v_\rho) \dif \mathrm{Vol}_{\Sigma_r }
\end{equation}
and $c^{(i)}$ has been chosen to ensure that $\int_{\Sigma_r} \big( f^{(i)} - c^{(i)} \big) \dif \mathrm{Vol}_{\Sigma_r } = 0$.  We now have 
$$I_\rho^{(i)} (  \mathcal U_p ) = \sum_{i=1}^6 b_i \int_{\Sigma_r} f^{(j)} f^{(i)} \dif \mathrm{Vol}_{\Sigma_r} 
$$
and would now like to find $\mathcal U_p$ so that $G_\rho (\mathcal U_p) \equiv 0$.  This will turn imply that $b_i = 0$ for all $i$ because the matrix whose coefficients are the integrals $\int_{\Sigma_r} f^{(j)} f^{(i)} \dif \mathrm{Vol}_{\Sigma_r}$ is an invertible matrix.  

The idea for locating a zero of $G_\rho$ is first to find $\mathcal U_p$ so that $G_\rho(\mathcal U_p)$ vanishes to lowest order in a Taylor expansion in powers of $\rho$, but in such a way that  $G_\rho$ remains locally surjective at this $\mathcal U_p$.  The implicit function theorem for finite-dimensional manifolds can then be invoked to find a nearby $ \mathcal U_{p'}$ for which $G_\rho ( \mathcal U_{p'}) \equiv 0$ exactly.

\paragraph*{Step 3.}  The previous step shows that the only non-vanishing component of $\nabla \cdot H \big( \tilde \Sigma_r  \big)$ is perhaps the projection of $\nabla \cdot H \big( \tilde \Sigma_r  \big)$ to $\mathrm{span}_{\R} \{ (0, 1) \}$.  But the divergence theorem can now be invoked to show that this component must vanish as well, thereby completing the proof of the Main Theorem.

\subsection{Estimates for the Approximate Solution}
\label{sec:estimates}

To begin, we must compute the size of $\| \bPhi_\rho(0,0) \|_{C^{2,\alpha} \times C^{0,\alpha}}$ which must be sufficiently small for the perturbation method of Step 1 to succeed.

\begin{prop}
	\label{prop:size}
	There is a constant $C>0$ independent of $\rho$ so that 
	$$\| \bPhi_\rho(0,0) \|_{C^{2,\alpha} \times C^{0, \alpha}}\leq C \rho^2 \, .$$
\end{prop}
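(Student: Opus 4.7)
The plan is straightforward: since $\mathcal{X}(0,0) = 0$ by the linearity and explicit formula \eqref{eqn:ansatz} for the Ansatz, we have $\bPhi_\rho(0,0) = \Phi_\rho(0) = \bigl(\mu_0^\ast \omega, \nabla \cdot H(\Sigma_r)\bigr)$, so the estimate reduces to bounding each of these two pieces in the appropriate H\"older norm by $C\rho^2$, using Lemma \ref{lemma:prelimest} together with the fact (Lemma \ref{lemma:torus}) that the corresponding Euclidean quantities vanish identically.

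For the first component, I would simply write $\mu_0^\ast \omega = \mu_0^\ast(\omega - \mathring\omega) + \mu_0^\ast\mathring\omega$. The second term vanishes because $\Sigma_r$ is Lagrangian with respect to $\mathring\omega$, and the first term is controlled pointwise by $\|\omega - \mathring\omega\|_{C^{k,\alpha}(M)}$, which is $\mathcal{O}(\rho^2)$ by the first bundle of estimates in Lemma \ref{lemma:prelimest}. Pulling back to $\Sigma_r$ only involves the uniformly bounded embedding $\mu_0$, so the $C^{2,\alpha}$ norm of $\mu_0^\ast\omega$ is $\mathcal{O}(\rho^2)$.

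For the second component, the key observation is that Lemma \ref{lemma:torus} gives $\mathring \nabla \cdot \mathring H = 0$ identically on $\Sigma_r$. Writing $\nabla \cdot H = h^{ij} H_{i;j}$ and $\mathring\nabla \cdot \mathring H = \mathring h^{ij} \mathring H_{i;:j}$, I would decompose
\begin{equation*}
\nabla\cdot H - \mathring\nabla\cdot \mathring H = (h^{ij}-\mathring h^{ij})\mathring H_{i;:j} + \mathring h^{ij}\bigl((H_i - \mathring H_i)_{;j} + \mathring H_i(\Gamma_{ij}^k - \mathring\Gamma_{ij}^k) \bigr) + (h^{ij}-\mathring h^{ij})(H-\mathring H)_{i;j}
\end{equation*}
(after expanding the covariant derivatives). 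Each factor involving a difference---$h - \mathring h$, $H - \mathring H$, or $\Gamma - \mathring\Gamma$ (which is controlled by $g - \mathring g$ in one extra derivative)---is of size $\mathcal{O}(\rho^2)$ in $C^{k,\alpha}$ by Lemma \ref{lemma:prelimest}, while the remaining factors ($\mathring H$, $\mathring h^{ij}$, and their derivatives) are uniformly bounded in $\rho$ on the fixed torus $\Sigma_r$. Taking $C^{0,\alpha}$ norms and applying the product rule for H\"older norms produces the claimed $\mathcal{O}(\rho^2)$ bound.

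The only subtlety is verifying that the preliminary estimates of Lemma \ref{lemma:prelimest}, which are stated for tensors like $B$, $H$, $\nabla\cdot X$, etc., are sharp enough to yield the desired decay on every term appearing in the decomposition above; since the expansion $F_\rho = \tfrac{1}{2}\|z\|^2 + \rho^2 \hat F_\rho$ and the fact that $\hat F_\rho$ is uniformly bounded in $C^{k,\alpha}$ propagate through all derived geometric quantities, this is routine. No new ideas are needed beyond Lemma \ref{lemma:torus}, Lemma \ref{lemma:prelimest}, and careful bookkeeping.
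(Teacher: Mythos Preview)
Your proposal is correct and follows essentially the same route as the paper's proof: both use Lemma~\ref{lemma:torus} to note that $\mathring\bPhi(0,0)=(0,0)$, then bound $\mu_0^\ast\omega$ via $\|\omega-\mathring\omega\|_{C^{2,\alpha}}\leq C\rho^2$, and finally decompose $\nabla\cdot H - \mathring\nabla\cdot\mathring H$ into pieces each carrying a factor of size $\mathcal O(\rho^2)$ from Lemma~\ref{lemma:prelimest}. The paper writes this last decomposition more compactly as $\nabla\cdot H = \mathring\nabla\cdot\mathring H + (\nabla-\mathring\nabla)\cdot\mathring H + \nabla\cdot(H-\mathring H)$, whereas you expand further into Christoffel symbols, but the content is the same (note a small slip in your displayed formula: the term $\mathring h^{ij}\mathring H_i(\Gamma_{ij}^k-\mathring\Gamma_{ij}^k)$ has a dangling index $k$ and should read $\mathring H_k$).
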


\begin{proof}  
By Lemma \ref{lemma:torus}, we have $\mathring \bPhi(0,0)=(0,0)$.  By Lemma \ref{lemma:prelimest}, we have $\|\omega-\mathring \omega \|_{C^{2,\alpha}}\leq C\rho^2$.  Furthermore, by writing $$\nabla \cdot H = \mathring \nabla \cdot \mathring H + (\nabla-\mathring \nabla) \cdot \mathring H + \nabla \cdot (H-\mathring H),$$ we have 
$$\| \nabla \cdot H \|_{C^{0,\alpha}} \leq C \rho^2\|\mathring H\|_{C^{0,\alpha}} + \|H-\mathring H\|_{C^{1,\alpha}}\leq C\rho^2.$$
again using the estimates of Lemma \ref{lemma:prelimest}.
\end{proof}

\subsection{Solving the Projected Problem}

This section proves that Step 1 from the outline above can be carried out.

 \begin{thm}
 	\label{thm:projprob}
 	For every $\rho$ sufficiently small, there is a solution $(u_\rho, v_\rho) \in \Big( C_0^{4,\alpha}(\Sigma_r) \times C_0^{4,\alpha}(\Sigma_r)\Big) \cap \mathcal K^\perp$ that satisfies
	$$ \pi \circ \bPhi_{\rho} (u_\rho, v_\rho) = (0,0) \, .$$
	Moreover, the estimate $\|(u_\rho, v_\rho)\|_{C^{4,\alpha} \times C^{4,\alpha}} \leq C \rho^2$ holds.
 \end{thm}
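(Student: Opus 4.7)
The plan is to recast the projected equation $\pi\circ\bPhi_\rho(u,v)=(0,0)$ as a fixed-point problem for a contraction on a small ball in the Banach space
$$\mathcal{B} := \big( C_0^{4,\alpha}(\Sigma_r) \times C_0^{4,\alpha}(\Sigma_r)\big) \cap \mathcal{K}^\perp,$$
and to apply the Banach fixed-point theorem in a ball of radius proportional to $\rho^2$. The $C^2$-bound on the solution then yields the claimed estimate automatically.

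First, I would establish that the unperturbed projected linearization $\pi\circ\mathring{\boldsymbol{L}}\big|_{\mathcal{K}^\perp}:\mathcal{B}\to \big(C^{2,\alpha}(\dif\Lambda^1(\Sigma_r))\times C^{0,\alpha}(\Sigma_r)\big)\cap[\mathcal{K}^*]^\perp$ is a bounded linear isomorphism. Ellipticity of $\mathring{\boldsymbol{L}}$ gives Fredholm theory, Corollary \ref{cor:kernel} identifies its kernel as $\mathcal{K}$, and Corollary \ref{cor:cokernel} identifies its cokernel as $\mathcal{K}^*$; quotienting out both yields the isomorphism, and I denote its inverse by $\mathring{\mathcal{G}}$. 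The perturbed operator satisfies $\pi\circ\boldsymbol{L}_\rho\big|_{\mathcal{K}^\perp} = \pi\circ\mathring{\boldsymbol{L}}\big|_{\mathcal{K}^\perp} + \pi\circ\boldsymbol{P}_\rho\big|_{\mathcal{K}^\perp}$, and Corollary \ref{cor:differenceop} together with the perturbation estimates of the previous section give $\|\pi\circ\boldsymbol{P}_\rho\|_{\mathrm{op}}\le C\rho^2$. A Neumann series argument then shows that $\pi\circ\boldsymbol{L}_\rho\big|_{\mathcal{K}^\perp}$ is invertible for $\rho$ small, with inverse $\mathcal{G}_\rho$ satisfying $\|\mathcal{G}_\rho\|_{\mathrm{op}}\le 2\|\mathring{\mathcal{G}}\|_{\mathrm{op}}$, uniformly in $\rho$.

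Next, I would split $\bPhi_\rho$ into its Taylor components around $(0,0)$. Writing
$$\boldsymbol{Q}_\rho(u,v) := \bPhi_\rho(u,v)-\bPhi_\rho(0,0)-\boldsymbol{L}_\rho(u,v),$$
the equation $\pi\circ\bPhi_\rho(u,v)=(0,0)$ becomes the fixed point problem
$$(u,v) = \mathcal{N}_\rho(u,v) := -\mathcal{G}_\rho\big(\pi\circ\bPhi_\rho(0,0) + \pi\circ\boldsymbol{Q}_\rho(u,v)\big).$$
The affine term is controlled by Proposition \ref{prop:size}, which gives $\|\mathcal{G}_\rho\circ\pi\circ\bPhi_\rho(0,0)\|_{\mathcal{B}}\le C\rho^2$ for a $\rho$-independent $C$. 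So, if I can prove that on a ball $B_{K\rho^2}(0)\subset\mathcal{B}$ the quadratic remainder obeys
$$\|\boldsymbol{Q}_\rho(u_1,v_1)-\boldsymbol{Q}_\rho(u_2,v_2)\|_{C^{2,\alpha}\times C^{0,\alpha}} \le C'\,\max\big(\|(u_i,v_i)\|_{\mathcal{B}}\big)\cdot\|(u_1-u_2,v_1-v_2)\|_{\mathcal{B}},$$
with $C'$ independent of $\rho$, then for $K$ chosen large enough and $\rho$ sufficiently small, $\mathcal{N}_\rho$ maps $B_{K\rho^2}(0)$ into itself and is a contraction. The Banach fixed-point theorem then yields the desired $(u_\rho,v_\rho)$.

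The main obstacle is verifying this uniform quadratic estimate for $\boldsymbol{Q}_\rho$. The operator $\bPhi_\rho$ is built from the pull-back symplectic form and the divergence of the symplectic mean curvature of $\mu_{\mathcal{X}(u,v)}(\Sigma_r)$; these are smooth rational expressions in $(u,v)$ and their derivatives up to order three (fourth order in the second component), divided by determinants of metric coefficients which remain close to the Euclidean induced metric on $B_{K\rho^2}(0)$. Multilinear Taylor expansion of each constituent with integral remainder, combined with the uniform bounds on the rescaled K\"ahler potential $F_\rho$ and its derivatives guaranteed by the rescaling of Section \ref{sec:scale} (and explicitly summarized in Lemma \ref{lemma:prelimest}), produces $\rho$-independent bounds on all the coefficient functions appearing in $\boldsymbol{Q}_\rho$. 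Standard product and composition estimates in H\"older spaces then deliver the required Lipschitz-quadratic bound. Closing the fixed-point argument in $B_{K\rho^2}(0)$ finishes the proof and simultaneously yields the estimate $\|(u_\rho,v_\rho)\|_{C^{4,\alpha}\times C^{4,\alpha}}\le C\rho^2$.
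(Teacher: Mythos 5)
Your proposal is correct and follows essentially the same route as the paper: invertibility of $\pi\circ\mathring{\boldsymbol{L}}$ on $\mathcal K^\perp$, persistence of invertibility for $\pi\circ\boldsymbol{L}_\rho$ via the $\mathcal O(\rho^2)$ perturbation estimate, the same Taylor split $\bPhi_\rho = \bPhi_\rho(0,0)+\boldsymbol{L}_\rho+\boldsymbol{Q}_\rho$ with a $\rho$-uniform Lipschitz-quadratic bound on $\boldsymbol{Q}_\rho$, and a contraction on a ball of radius $\mathcal O(\rho^2)$. The only cosmetic difference is that the paper phrases the fixed-point iteration in the auxiliary variable $(w,\xi)=-\boldsymbol{L}_\rho(u,v)-\pi\circ\bPhi_\rho(0,0)$ living in $[\mathcal K^\ast]^\perp$, whereas you iterate directly on $(u,v)\in\mathcal K^\perp$; these are conjugate by $\boldsymbol{L}_\rho^{-1}$ and produce the same solution and estimate.
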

 
 \begin{proof}
 	The solvability of the equation $\pi \circ \bPhi_\rho (u, v) = (0,0)$ is governed by the behaviour of the linearized operator $\pi \circ \boldsymbol{L}_\rho$ between the Banach spaces given in the statement of the theorem, as well as on the size of $\| \bPhi_{\rho}(0,0) \|_{C^{2,\alpha} \times C^{0,\alpha}}$, which we know to be $\mathcal O( \rho^2)$ by Proposition \ref{prop:size}.
	
	First, by standard elliptic theory, the operator $\pi \circ \mathring{\boldsymbol{L}}$ is invertible between $\mathcal K^\perp$ and $[\mathcal K^\ast]^\perp$ with the estimate
$$\| \pi \circ \mathring{\boldsymbol{L}}(u,v) \|_{C^{2,\alpha} \times C^{0,\alpha}} \geq C \| (u,v) \|_{C^{4,\alpha} \times C^{4, \alpha}} $$
where $C$ is a constant independent of $\rho$.  Consequently, if $\rho$ is sufficiently small, then the operator $\pi \circ \boldsymbol{L}_\rho$ is uniformly injective with the estimate
$$\| \pi \circ \boldsymbol{L}_\rho(u,v) \|_{C^{2,\alpha} \times C^{0,\alpha}} \geq \frac{C}{2} \| (u,v) \|_{C^{4,\alpha} \times C^{4, \alpha}} \, .$$
Hence by perturbation, the operator $\pi \circ \boldsymbol{L}_\rho$ is also surjective onto $[\mathcal K^\ast]^\perp$ and the inverse is bounded above independently of $\rho$.  

The remainder of the proof uses the contraction mapping theorem.  First, write 
$$\pi \circ \bPhi_\rho (u,v) := \pi \circ \bPhi_\rho(0,0) + \pi \circ \boldsymbol{L}_\rho (u, v) + \pi \circ \boldsymbol{Q}_\rho (u, v)$$
where $\boldsymbol{Q}_\rho$ is the quadratic remainder (in $u$ and $v$) $\bPhi_\rho$.  It is fairly straightforward to show that $\boldsymbol{Q}_\rho$ satisfies the estimate
$$\| \boldsymbol{Q}_\rho (u_1, v_1) - \boldsymbol{Q}_\rho  (u_2, v_2) \|_{C^{2,\alpha}\times C^{0,\alpha}} \leq C \| (u_1 + u_2, v_1+ v_2)\|_{C^{4,\alpha}\times C^{4,\alpha}} \|(u_1- u_2 , v_1- v_2) \|_ {C^{4,\alpha}\times C^{4,\alpha}} $$
for some constant $C$ independent of $\rho$, provided $\rho$ is sufficiently small.  This is because such an estimate is certainly true for the quadratic remainder of $\mathring \bPhi$.  Now let $\boldsymbol{L}^{-1}_\rho: [\mathcal K^\ast]^\perp\rightarrow \mathcal K^\perp$ denote the inverse of $\boldsymbol{L}_\rho$ onto $\mathcal K^\perp$.  By proposing the \emph{Ansatz}  $(u,v) := \boldsymbol{L}^{-1}_\rho \big( - (w, \xi) - \pi \circ \bPhi_{\rho}(0,0) \big)$, for $(w,\xi)\in [\mathcal K^\ast]^\perp$, the equation $\pi\circ \bPhi_\rho (u,v) = (0,0)$ becomes equivalent to the fixed-point problem for the map 
	$$ \mathcal N_\rho : (w, \xi) \mapsto \pi \circ \boldsymbol{Q}_\rho \big( \boldsymbol{L}^{-1}_\rho \big( - (w, \xi) - \pi \circ \bPhi_{\rho}(0,0) \big) \big) 
	$$
on $[\mathcal K^\ast]^\perp$.  For small enough $\rho$, the non-linear mapping $(w, \xi) \mapsto 	\pi \circ \boldsymbol{Q}_\rho \big( \boldsymbol{L}^{-1}_\rho \big( - (w, \xi) - \pi \circ \bPhi_{\rho}(0,0) \big) \big)$ verifies the estimates required to find a fixed point in a closed ball $B\subset  [\mathcal K^\ast]^\perp$ of radius equal to $\| \bPhi_\rho(0,0) \|_{C^{2,\alpha} \times C^{0,\alpha}} = \mathcal O(\rho^2)$ by virtue of the $\rho$-independent estimates that have been found for $\boldsymbol{L}^{-1}_\rho$ and $\boldsymbol{Q}_\rho$.  For example, for $(w,\xi)\in B$, 
$$\| \mathcal N_\rho (w, \xi) \|_{C^{2,\alpha}\times C^{0,\alpha}}\leq C \| \bPhi_\rho(0,0) \|^2_{C^{2,\alpha} \times C^{0,\alpha}} \leq \| \bPhi_\rho(0,0) \|_{C^{2,\alpha} \times C^{0,\alpha}} $$
for $\rho$ small enough; hence the set $B$ is mapped to itself under $\mathcal N_\rho$.   Furthermore, $\mathcal N_\rho$ is a contraction on $B$ as a result of the biinear estimate on $\boldsymbol{Q}_\rho$ given above.   Consequently, $\mathcal N_\rho$ must have a fixed point $ (w, \xi) \in B $ which thus satisfies $\|(w, \xi)\|_{C^{2,\alpha} \times C^{0,\alpha}} \leq C \rho^2$ for some constant $C$ independent of $\rho$.  The desired estimate follows.
\end{proof}
 
\begin{rmk}   
	The solution $(u_\rho, v_\rho)$ is in fact smooth by elliptic regularity theory and the estimate $\|(u_\rho, v_\rho)\|_{C^{k,\alpha}\times C^{k,\alpha}} \leq C \rho^2$ holds for all $k \in \N$, where $C$ is independent of $\rho$.
\end{rmk}

\subsection{Derivation of the Existence Condition}

The remainder of the proof begins with a more careful investigation of the integrals \eqref{eqn:cokerproj} for all choices of $f$ spanning $\mathcal K_0^\ast$.  Recall that such $f$ come from translation and $U(2)$-rotation in the local coordinates at the point $p$; one can thus construct a basis for $\mathcal K_0^\ast$ as follows.  Let $(U, \tau) \,\cdot $ denote the motion of $\C^2$ given by $z \mapsto U(z) + \tau$ where $U \in U(2)$ and $\tau \in \C^2$.  Then we consider the six-dimensional parameter family of motions of $M$ given by
$$\mathcal R := \big\{ \big( \exp( \mi \tau_5 K_1 + \mi \tau_6 K_2), \tau \big) \cdot \, : \, \tau_5, \tau_6 \in \R \mbox{ and } \tau  := (\tau_1, \ldots, \tau_4) \in \R^4 \big\} $$
where 
$$K_1 := \left( \begin{matrix} 0 & 1 \\ 1 & 0 \end{matrix} \right) \qquad \mbox{and} \qquad K_2 := \left( \begin{matrix} 0 & \mi \\ -\mi & 0 \end{matrix} \right)$$
are elements in the Lie algebra of $U(2)$ that generate all non-trivial $U(2)$-rotations of $\Sigma_r$.  Note that the orbit of $\mathcal U_p$ under a small neighbourhood of the identity in $\mathcal R$ projects diffeomorphically onto a neighbourhood of $[\mathcal U_p] \in \mathbf{U}_2(M) / \mathit{Diag}$.  Denote by $\mu_t^{(i)}$ for $i = 1, \ldots 6$ those motions which correspond to $\tau_i = t$ and $\tau_{i'} = 0$ for $i' \neq i$.  Note that each $\mu^{(i)}$ is Hamiltonian with respect to the Euclidean K\"ahler structure, with $J \mathring \nabla f^{(i)} := \frac{\dif}{\dif t} \mu_t^{(i)} \big|_{t=0} $.  Moreover the restriction of $f^{(i)}$ to $\Sigma_r$ belongs to $\mathcal K_0^\ast$.  Indeed, the translations $\mu_t^{(1)}, \cdots , \mu_t^{(4)}$ yield the functions $\cos(\theta^s)$ and $\sin(\theta^s)$ for $s=1,2$ while the $U(2)$-rotations $\mu_t^{(5)}$ and $\mu_t^{(6)}$ yield the functions $\sin(\theta^1 - \theta^2)$ and $\cos (\theta^1 - \theta^2)$.  
   
We can relate the integrals $I^{(i)}_\rho (\mathcal U_p)$ to the ambient geometry of $M$ to lowest order in $\rho$ using the first variation formula along with Stokes' theorem.   Let $v^{(i)} := ( v^{(i)}_1 ,1 )$ and note that $v^{(5)}_1 = v^{(6)}_1 = 0$. 

\begin{lemma} The following formula holds. 
\begin{equation}
	\label{eqn:Iformula}
	I^{(i)}_\rho (\mathcal U_p) =  \left. \frac{\dif}{\dif t} \mathit{Vol} \big(\mu_t^{(i)} ( \Sigma_r  )\big) \right|_{t=0} +  v^{(j)}_1 \int_{\Sigma_r} f^{(j)} \cdot  ( \omega - \mathring \omega) + \mathcal O(\rho^4) \, .
\end{equation} 
\end{lemma}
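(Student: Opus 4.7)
The plan rests on three ingredients: the projected equation $\pi\circ\bPhi_\rho(u_\rho,v_\rho)=(0,0)$ with $\|(u_\rho,v_\rho)\|_{C^{4,\alpha}\times C^{4,\alpha}}=\mathcal O(\rho^2)$ from Theorem \ref{thm:projprob}, the $L^2$-orthogonality between $\mathcal K^\ast$ and the image of $\mathring{\boldsymbol L}$, and the first variation of volume along the ambient deformation $\mu_t^{(i)}$.  I will combine these to strip $I^{(i)}_\rho$ down to a pairing against $\bPhi_\rho(0,0)$ and then identify its two components geometrically.

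First, expand $\bPhi_\rho(u_\rho,v_\rho)=\bPhi_\rho(0,0)+\boldsymbol L_\rho(u_\rho,v_\rho)+\boldsymbol Q_\rho(u_\rho,v_\rho)$ with $\boldsymbol Q_\rho$ the quadratic remainder, and decompose $\boldsymbol L_\rho=\mathring{\boldsymbol L}+\boldsymbol P_\rho$.  The test tensor $(f^{(i)}-c^{(i)})v^{(i)}$ lies in $\mathcal K^\ast$ (the constant $c^{(i)}$ being chosen precisely to kill the pairing against the $(0,1)$-component of $\mathcal K^\ast$), and since $\mathrm{Image}(\mathring{\boldsymbol L})=[\mathcal K^\ast]^\perp$, its pairing with $\mathring{\boldsymbol L}(u_\rho,v_\rho)$ vanishes.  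The residual contributions are controlled by $\|\boldsymbol P_\rho(u_\rho,v_\rho)\|=\mathcal O(\rho^2)\cdot\mathcal O(\rho^2)=\mathcal O(\rho^4)$ from Section \ref{sec:perturbterm} and $\|\boldsymbol Q_\rho(u_\rho,v_\rho)\|=\mathcal O(\|(u_\rho,v_\rho)\|^2)=\mathcal O(\rho^4)$; likewise the discrepancy between the K\"ahler and Euclidean area elements on $\Sigma_r$, paired with $\bPhi_\rho(0,0)=\mathcal O(\rho^2)$, is $\mathcal O(\rho^4)$.  Thus $I^{(i)}_\rho=\int_{\Sigma_r}(f^{(i)}-c^{(i)})v^{(i)}\cdot\bPhi_\rho(0,0)\,\dif\mathrm{Vol}^\circ_{\Sigma_r}+\mathcal O(\rho^4)$.

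I then split according to the two components $\bPhi_\rho(0,0)=\bigl(\mu_0^\ast\omega,\,\nabla\!\cdot\! H(\Sigma_r)\bigr)$.  The first produces $v^{(i)}_1\int_{\Sigma_r}(f^{(i)}-c^{(i)})\mu_0^\ast\omega$; since $\mathring\omega\big|_{\Sigma_r}=0$ I can rewrite $\mu_0^\ast\omega=(\omega-\mathring\omega)\big|_{\Sigma_r}$, and because this 2-form is exact on the torus (as noted in Section \ref{sec:functional}) its integral over $\Sigma_r$ vanishes, eliminating the $c^{(i)}$ contribution and reproducing exactly the claimed correction term $v^{(i)}_1\int_{\Sigma_r}f^{(i)}(\omega-\mathring\omega)$.

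The main obstacle is to show that the second-component piece $\int_{\Sigma_r}(f^{(i)}-c^{(i)})\nabla\!\cdot\! H(\Sigma_r)\,\dif\mathrm{Vol}^\circ_{\Sigma_r}$ equals $\frac{\dif}{\dif t}\mathit{Vol}(\mu_t^{(i)}(\Sigma_r))\big|_{t=0}$ modulo $\mathcal O(\rho^4)$.  My plan is to invoke the first variation of volume, $\frac{\dif}{\dif t}\mathit{Vol}^g\big|_{t=0}=-\int g(\vec H^g,X^{(i)})\,\dif\mathrm{Vol}^g=-\int(X^{(i)}\elbow\omega)(J\vec H^g)\,\dif\mathrm{Vol}^g$, and then use $X^{(i)}\elbow\omega=\dif f^{(i)}+X^{(i)}\elbow(\omega-\mathring\omega)$, which follows from $\mu_t^{(i)}$ being $\mathring\omega$-Hamiltonian with Hamiltonian $f^{(i)}$.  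Decomposing $J\vec H^g=H^\sharp+J\mathcal D(\vec H^g)$ via the formula used in the proof of Proposition \ref{prop:pertlin}, where $H^\sharp$ is the $h$-dual of the symplectic mean curvature (tangent to $\Sigma_r$) and $\mathcal D(\vec H^g)=\mathcal O(\rho^2)$ by Lemma \ref{lemma:prelimest}, the $\dif f^{(i)}(H^\sharp)$ piece integrates by Stokes' theorem on the closed torus into $\int f^{(i)}\nabla\!\cdot\! H\,\dif\mathrm{Vol}^g$.  Passing between $\dif\mathrm{Vol}^g$ and $\dif\mathrm{Vol}^\circ$ and removing the $c^{(i)}$ constant each cost only $\mathcal O(\rho^4)$ because $\nabla\!\cdot\! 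H=\mathcal O(\rho^2)$ (given $\mathring\nabla\!\cdot\!\mathring H=0$) and $\int\nabla\!\cdot\! H\,\dif\mathrm{Vol}^h=0$ by the divergence theorem.  The hard part is bounding the two leftover error terms $-\int\dif f^{(i)}\bigl(J\mathcal D(\vec H^g)\bigr)\,\dif\mathrm{Vol}^g$ and $-\int(\omega-\mathring\omega)(X^{(i)},J\vec H^g)\,\dif\mathrm{Vol}^g$, each individually only $\mathcal O(\rho^2)$: they must be shown to cancel to order $\mathcal O(\rho^4)$ via a careful leading-order expansion of $\vec H^g$, $\mathcal D(\vec H^g)$, and $\omega-\mathring\omega$ in powers of $\rho^2$ using the K\"ahler-potential structure $F_\rho=\tfrac12\|z\|^2+\rho^2\hat F_\rho$.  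This second-order bookkeeping is where the real technical work of the proof lies.
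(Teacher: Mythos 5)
Your proposal reproduces the paper's overall strategy for this lemma: expand $\bPhi_\rho(u_\rho,v_\rho)$ as $\bPhi_\rho(0,0)+\mathring{\boldsymbol L}(u_\rho,v_\rho)+\boldsymbol P_\rho(u_\rho,v_\rho)+\boldsymbol Q_\rho(u_\rho,v_\rho)$, kill the $\mathring{\boldsymbol L}$ term via the $L^2$-orthogonality of $\mathcal K^\ast$ against the image of $\mathring{\boldsymbol L}$ (using $\dif\mathrm{Vol}^\circ$), absorb the $\boldsymbol P_\rho$, $\boldsymbol Q_\rho$ and area-element discrepancies at $\mathcal O(\rho^4)$, and then analyze $\bPhi_\rho(0,0)=(\mu_0^\ast\omega,\nabla\!\cdot\! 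H(\Sigma_r))$ component by component. Your treatment of the first component --- rewriting $\mu_0^\ast\omega=(\omega-\mathring\omega)\big|_{\Sigma_r}$ and removing the $c^{(i)}$ term by exactness --- matches the paper exactly.

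The gap is in the second component. You set up the first variation correctly, split $X^{(i)}\elbow\omega=\dif f^{(i)}+X^{(i)}\elbow(\omega-\mathring\omega)$ and $J\vec H^g=H^\sharp+J\mathcal D(\vec H^g)$, and correctly observe that the two resulting residuals $\int\dif f^{(i)}\bigl(J\mathcal D(\vec H^g)\bigr)\dif\mathrm{Vol}^g$ and $\int(\omega-\mathring\omega)(X^{(i)},J\vec H^g)\dif\mathrm{Vol}^g$ are a priori only $\mathcal O(\rho^2)$, which by itself is not good enough for the claimed $\mathcal O(\rho^4)$. But then you simply announce that "they must be shown to cancel... via a careful leading-order expansion" and call this "where the real technical work of the proof lies" without carrying it out. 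That is a genuine gap: identifying the critical step is not the same as discharging it. It is not even clear from your write-up that cancellation between the two terms is the right mechanism --- each may vanish separately at leading order by a parity argument over the torus (the kind used later in the proof of Proposition \ref{prop:expansion}, where integrands whose leading Fourier content is odd-degree in $\cos\theta^s,\sin\theta^s$ drop out). The paper's own proof is admittedly terse at this exact point (it lists "Stokes' Theorem" among its ingredients without elaboration), so you have located the most delicate step correctly; but until you produce the leading-order expansion and show the residual integrals are $\mathcal O(\rho^4)$, the proof is incomplete.
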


\begin{proof} After a careful computation, we find
\begin{align*}
	\int_{\Sigma_r}    (f^{(j)} - c^{(j)})  \bPhi(u_\rho, v_\rho) \cdot v^{(j)}  \dif \mathrm{Vol}_{\Sigma_r} &\\[1ex]
	&\hspace{-30ex} =    \int_{\Sigma_r} \nabla \cdot H(\Sigma_r)   (f^{(j)} - c^{(j)}) \dif \mathrm{Vol}_{\Sigma_r}  +  v^{(j)}_1 \int_{\Sigma_r} (f^{(j)} - c^{(j)}) ( \omega - \mathring \omega)  \\
	&\hspace{-30ex} \qquad +  \int_{\Sigma_r} (f^{(j)} - c^{(j)})  \mathring{\boldsymbol{L}} (u_\rho, v_\rho) \cdot v^{(j)} \dif \mathrm{Vol}^\circ_{\Sigma_r}  \\
	&\hspace{-30ex} \qquad  +  \int_{\Sigma_r} (f^{(j)} - c^{(j)}) \boldsymbol{L}_\rho (u_\rho, v_\rho) \cdot v^{(j)} \big( \dif \mathrm{Vol}_{\Sigma_r} - \dif \mathrm{Vol}^\circ_{\Sigma_r} \big) \\
	&\hspace{-30ex} \qquad + \int_{\Sigma_r} (f^{(j)} - c^{(j)}) \boldsymbol{P}_\rho (u_\rho, v_\rho) \cdot  v^{(j)} \dif \mathrm{Vol}_{\Sigma_r} +  \int_{\Sigma_r} (f^{(j)} - c^{(j)}) \boldsymbol{Q}_\rho (u_\rho, v_\rho) \cdot  v^{(j)} \dif \mathrm{Vol}_{\Sigma_r} \\[1ex]
	&\hspace{-30ex}=    \left. \frac{\dif}{\dif t} \mathit{Vol} \big(\mu_t^{(j)} ( \Sigma_r  )\big) \right|_{t=0} +  v^{(j)}_1 \int_{\Sigma_r} (f^{(j)} - c^{(j)}) ( \omega - \mathring \omega)  \\
	&\hspace{-30ex}\qquad +  \int_{\Sigma_r} (f^{(j)} - c^{(j)})  \mathring{\boldsymbol{L}} (u_\rho, v_\rho) \cdot v^{(j)} \dif \mathrm{Vol}^\circ_{\Sigma_r}  + \mathcal O(\rho^4) \, .
\end{align*}
Here we have used the expansion $\bPhi_\rho(u_\rho, v_\rho) =  \bPhi_\rho(0, 0)+ \boldsymbol{L}_{\rho}(u_\rho, v_\rho) +  \boldsymbol{Q}_{\rho}(u_\rho, v_\rho)$, where $\boldsymbol{L}_{\rho}= \mathring{\boldsymbol{L}}+\boldsymbol{P}_\rho$ and $\boldsymbol{Q}_\rho$ is the quadratic remainder of the operator $\bPhi_\rho$, along with the following facts:
\begin{itemize}
	\item $\| (u_\rho, v_\rho) \|$ and $\| \mathring{\boldsymbol{L}}^{(2)}(u_\rho, v_\rho) \|_{C^0}$ and $  \| \nabla \cdot H (\Sigma_r) \|_{C^0}$ are all $\mathcal O(\rho^2)$ 
	\item  $\| \boldsymbol{P}_\rho (u_\rho, v_\rho) \|_{C^0} \leq C \rho^2 \| (u_\rho, v_\rho) \|_{C^{2,\alpha}\times C^{2,\alpha}}= \mathcal O(\rho^4)$
	\item  $\| \boldsymbol{Q}_\rho (u_\rho, v_\rho) \|_{C^0} \leq C \| (u_\rho, v_\rho) \|_{C^{4,\alpha}\times C^{4,\alpha}}^2 = \mathcal O(\rho^4)$
	\item the difference between any of the volume forms appearing above is $\mathcal O(\rho^2)$
	\item $\int_{\Sigma_r} f^{(i)} \dif \mathrm{Vol}_{\Sigma_r}^\circ = 0$ which implies $|c^{(i)} | = \mathcal O(\rho^2)$
\end{itemize}
along with Stokes' Theorem.  To complete the proof of the lemma, we note that the second term vanishes since $(f^{(j)} - c^{(j)}) v^{(j)} $ belongs to the kernel of $\mathring{\boldsymbol{L}}^\ast$.
\end{proof}

\noindent Now, let $\{ \Xi^{(1)}, \ldots, \Xi^{(6)}\}$  be the vectors in $T_{[\mathcal{U}_p]} \big( \mathbf{U}_2(M) /  \mathit{Diag} \big)$ corresponding to motions $\{ \mu_t^{(1)}, \ldots, \mu_t^{(6)} \}$ above. 

\begin{prop} 
	\label{prop:expansion}
	Define the smooth mapping
	\begin{gather*}
	{\mathcal F}_r : \mathbf{U}_2(M) / \mathit{Diag} \rightarrow \R \\
	{\mathcal F}_r ([ \mathcal U_p ] ) := r_1^2 R^{\C}_{1 \bar 1}(p) + r_2^2 R^{\C}_{2 \bar 2}(p) 
\end{gather*}
where the components of the complex Ricci curvature $R^{\C}_{1 \bar 1}$ and $R^{\C}_{2 \bar 2}$ are computed with respect to the chosen frame. Then the mapping $G_\rho : \mathbf{U}_2 (M) \rightarrow \R^6$ defined by $G_\rho(\mathcal U_p) :=  \big( I_\rho^{(1)}(\mathcal U_p), \ldots, I_\rho^{(6)}(\mathcal U_p) \big)$ satisfies 
\begin{equation}
	\label{eqn:finalest}
	G_\rho (\mathcal U_p) =  4 \pi^2 r_1 r_2 \, \rho^2 D \mathcal F_r ([\mathcal U_p])\cdot (\Xi^{(1)}, \ldots, \Xi^{(6)}) + \mathcal O(\rho^3) \, .
\end{equation}
\end{prop}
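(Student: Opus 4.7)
Starting from the preceding lemma, which gives
\[
I^{(i)}_\rho(\mathcal U_p) = \left.\frac{\dif}{\dif t}\mathit{Vol}\bigl(\mu_t^{(i)}(\Sigma_r)\bigr)\right|_{t=0} + v^{(i)}_1 \int_{\Sigma_r} f^{(i)}(\omega - \mathring\omega) + \mathcal O(\rho^4),
\]
my plan is to prove \eqref{eqn:finalest} by showing separately that the volume derivative produces the leading $\rho^2$-term $4\pi^2 r_1 r_2 \rho^2\, D\mathcal F_r\cdot \Xi^{(i)}$, while the integral against $(\omega-\mathring\omega)$ contributes only at order $\rho^3$. Both steps rest on the K\"ahler potential expansion $F_\rho(z,\bar z) = \tfrac{1}{2}\|z\|^2 + \rho^2 \hat F_\rho(z,\bar z)$, whose leading fourth-order part is determined by the complex curvature tensor via $R^{\C}_{k\bar l m\bar n}(p) = \partial^4 \hat F(0)/\partial z^k\partial\bar z^l\partial z^m\partial\bar z^n$ from equation \eqref{eqn:cxcurvature}, followed by Fourier orthogonality on $\T^2$.

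For the volume piece, I would compute the induced metric $h$ on $\Sigma_r(\mathcal U_p)$, obtaining
\[
h_{ij} - \mathring h_{ij} = 2\rho^2 r_i r_j\,\mathbf{Re}\!\left[\sum_{m,n} R^{\C}_{i\bar j m\bar n}(p)\, r_m r_n\, \me^{\mi(\theta^i - \theta^j + \theta^m - \theta^n)}\right] + \mathcal O(\rho^3),
\]
then expand $\sqrt{\det h} = \sqrt{\det \mathring h}\bigl(1 + \tfrac{1}{2}\mathring h^{ij}(h_{ij} - \mathring h_{ij})\bigr) + \mathcal O(\rho^4)$. After integration against $d\theta^1 d\theta^2$, only the patterns with $i=j$ and $m=n$ survive; using $\sum_i R^{\C}_{i\bar i m\bar m}(p) = R^{\C}_{m\bar m}(p)$, this yields
\[
\mathit{Vol}_g(\Sigma_r(\mathcal U_p)) = V_0(r) + 4\pi^2 r_1 r_2 \rho^2\, \mathcal F_r([\mathcal U_p]) + \mathcal O(\rho^4).
\]
Since each $\mu_t^{(i)}$ acts as a Euclidean isometry on $\C^2$ that moves $[\mathcal U_p]$ along $\Xi^{(i)}$ in $\mathbf{U}_2(M)/\mathit{Diag}$, differentiating the above function of $\mathcal U_p$ at $t = 0$ produces exactly the leading term $4\pi^2 r_1 r_2 \rho^2\, D\mathcal F_r\cdot \Xi^{(i)}$, with $\mathcal O(\rho^4)$ remainder.

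For the correction term, $v^{(i)}_1 = 0$ when $i = 5, 6$, so the term drops out for the rotational directions. For $i = 1,\ldots,4$, expanding $\omega$ via the same K\"ahler potential shows that only $(k,l) = (1,2)$ and $(2,1)$ contribute to $(\omega - \mathring\omega)(T_1,T_2)$, giving (after combining conjugates)
\[
(\omega - \mathring\omega)(T_1, T_2) = -2\rho^2 r_1 r_2 \, \mathbf{Im}\!\left[\sum_{m,n} R^{\C}_{1\bar 2 m \bar n}(p)\, r_m r_n\, \me^{\mi(\theta^1 - \theta^2 + \theta^m - \theta^n)}\right] + \mathcal O(\rho^3).
\]
Multiplying by $f^{(i)} \in \{\cos\theta^s, \sin\theta^s\}$ and working case by case on $(m,n)$, every resulting Fourier mode has a nontrivial frequency in at least one of $\theta^1, \theta^2$, so the $\T^2$-integral vanishes. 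Hence the correction term is $\mathcal O(\rho^3)$, as needed.

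The main obstacle is the Fourier bookkeeping: one must verify in the volume calculation that the $i=j$, $m=n$ patterns indeed recombine precisely into the Ricci contraction $R^{\C}_{m\bar m}$ with coefficient $r_m^2$ and no residual, and in the correction calculation that every exponent combination $\me^{\mi(\pm\theta^1 - \theta^2 + \theta^m - \theta^n)}$ (from pairing with $\cos\theta^s$ or $\sin\theta^s$) fails the zero-frequency test. A minor auxiliary step is identifying the translations $\mu_t^{(1)},\ldots,\mu_t^{(4)}$ and the off-diagonal $U(2)$-rotations $\mu_t^{(5)}, \mu_t^{(6)}$ with the chosen basis $\Xi^{(1)},\ldots,\Xi^{(6)}$ of $T_{[\mathcal U_p]}(\mathbf{U}_2(M)/\mathit{Diag})$; once this is done, combining the two pieces above yields \eqref{eqn:finalest}.
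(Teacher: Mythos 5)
Your proposal is correct and follows essentially the same route as the paper's proof: both compute the leading $\rho^2$-term of the volume by Taylor-expanding the induced metric via the K\"ahler potential, use the Fourier orthogonality relation on $\T^2$ to pick out the zero-frequency modes (which recombine into the complex Ricci trace), and dispose of the $(\omega-\mathring\omega)$ correction by a parity/degree argument together with the observation that $v^{(5)}_1 = v^{(6)}_1 = 0$. The only presentational difference is that you compute $\mathit{Vol}_g(\Sigma_r(\mathcal U_p))$ as a function on the frame bundle and differentiate along $\Xi^{(i)}$, whereas the paper pulls the motion $(U_t,\tau_t)$ back into the K\"ahler potential $F_\rho^t := F_\rho\circ(U_t,\tau_t)$ and differentiates in $t$; these are the same chain-rule computation. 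One small omission worth flagging: after expanding $\sqrt{\det h} = r_1 r_2\bigl(1+\tfrac12\mathring h^{ij}(h_{ij}-\mathring h_{ij})\bigr)+\mathcal O(\rho^4)$ you still have an $\mathcal O(\rho^3)$ contribution coming from the fifth-order Taylor coefficients of $\hat F$ (since $\hat F_\rho = \hat F^{(4)} + \rho\hat F^{(5)}+\cdots$); to claim the $\mathcal O(\rho^4)$ remainder in the integrated volume you must note, as the paper does, that these $\rho^3$-terms are odd-degree trigonometric polynomials and therefore vanish upon integration over $\T^2$.
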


\begin{proof}
We expand the terms appearing in \eqref{eqn:Iformula}.  We begin with the derivative of the volume since it is the more involved quantity.  We have 
\begin{align*}
	\mathit{Vol} \big(\mu_t^{(i)}( \Sigma_r  ) \big) &= \mathit{Vol} \big((U_t, \tau_t ) \cdot \Sigma_r  \big) = \int_{(U_t, \tau_t) \cdot \Sigma_r} \big( \det (h_{F_\rho, t}) \big)^{1/2} \, \dif \theta^1 \wedge \dif \theta^2  + \mathcal O(\rho^4) 
\end{align*}
where $(U_t, \tau_t)\cdot$ is the motion corresponding to $\mu_t^{(i)}$ while $h_{F_\rho, t}$ is the induced metric of $(U_t, \tau_t) \cdot \Sigma_r$ with respect to the K\"ahler metric whose K\"ahler potential is $F_\rho$.  But 
\begin{align*}
	\int_{(U_t, \tau_t) \cdot \Sigma_r} \big( \det (h_{F_\rho, t}) \big)^{1/2} \, \dif \theta^1 \wedge \dif \theta^2  
	& = \int_{\Sigma_r} \big( \det (h_{F_\rho^t}) \big)^{1/2} \, \dif \theta^1 \wedge \dif \theta^2
\end{align*}
where $h_{F_\rho^t}$ is the induced metric of $\Sigma_r$ with respect to the K\"ahler metric whose K\"ahler potential is $F_\rho^t := F_\rho \circ (U_t, \tau_t)$, as can be checked fairly easily.   Therefore to complete the calculation, one must find the first few terms of the Taylor series of  $\big( \det (h_{F_\rho^t}) \big)^{1/2}$ in $\rho$ and allow the integration over the torus to pick out certain terms.

To this end, note that if $f : \C^2 \rightarrow \R$ is a real-valued function then elementary Fourier analysis shows that its restriction to the torus satisfies 
\begin{equation}
	\label{eqn:fourier}
	\int_{\T^2} f(\me^{\mi \theta^1}, \me^{\mi \theta^2}) \, d\theta^1 \wedge d \theta^2 = 4 \pi^2 \big(f(0) +  r_1^2 f_{,11}(0) + r_2^2 f_{,22}(0) \big) + Q^{(4)}(r_1, r_2)
\end{equation}
where  $Q^{(4)}$ consists only of terms coming from fourth and higher-order Fourier coefficients of $f \big|_{\T^2}$. This formula can be seen by writing $f(z, \bar z) :=  f(0) + \frac{\partial f}{\partial z^k}(0) z^k + \frac{\partial f}{\partial \bar z^k}(0) \bar z^k + \cdots$ and substituting $\bar z^k = r^k \me^{\mi \theta^k}$; the integration over the torus then causes all odd-order combinations of $z^k$ and $\bar z^k$ to vanish while giving exactly the terms in \eqref{eqn:fourier} at order two.   To apply this to the calculation at hand, first compute 
$$F_\rho^t (z, \bar z) := \frac{1}{2} \| U_t( z) + \tau_t \|^2 + \rho^2 (\hat F_\rho^t)^{(4)}(z, \bar z) + \rho^3 \mathcal O(\| z \|^5)$$
where $(\hat F_\rho^t)^{(4)}(z, \bar z)$ is the $\mathcal O(\| z\|^4)$ term in the Taylor series expansion of $\hat F_\rho \circ (U_t, \tau_t)$.   Now let $Q^{(3)}(z_1, z_2)$ denote a cubic polynomial in its arguments and observe
$$h_{F_\rho^t} = \Re \sum_{a,b} \Big( r_a^2 \delta_{ab} + \rho^2 (\hat F_\rho^t)^{(4)}_{ ,a \bar b}\, \, r_a r_b \me^{\mi (\theta^a - \theta^b)} + \rho^3 Q^{(3)}( r_1 \me^{\mi \theta^1} , r_2 \me^{\mi \theta^2}) + \mathcal O(\rho^4) \Big) \dif \theta^a \otimes \dif \theta^b \, .$$
The $\mathcal O(\rho^4)$ term is quartic and higher in $r_k \me^{\mi \theta^k}$. Integrating and taking advantage of the fact that  the cubic terms in the expansion of $\big( \det (h_{F_\rho^t}) \big)^{1/2}$ must vanish we can express
\begin{align}
	\label{eqn:integral}
	\int_{\Sigma_r} \big( \det (h_{F_\rho^t}) \big)^{1/2} \dif \theta^1 \wedge \dif \theta^2 &= \int_{\Sigma_r} r_1 r_2 \Big( 1 + \frac{\rho^2}{2 } \sum_c  ( \hat F_\rho^t)^{(4)}_{, c \bar c} \Big) \dif \theta^1 \wedge \dif \theta^2  + \mathcal O(\rho^4).
	\end{align}
Next, we write the first few terms of the Fourier expansion of the integrand (via the Taylor expansion) and integrate these to re-write the $\mathcal O(\rho^2)$ part of \eqref{eqn:integral} as 
\begin{align*}
	&r_1 r_2  \int_{\Sigma_r}\rho^2 \sum_{c, u, v} r_u r_v  \, \Re \big( (\hat F_\rho^t)_{, c \bar c u v} (0) \me^{\mi(\theta^u + \theta^v)} + (\hat F_\rho^t)_{, c \bar c u \bar v} (0) \me^{\mi(\theta^u - \theta^v)} \big) \dif \theta^1 \wedge \dif \theta^2 \ .
\end{align*}
Performing this integral yields
\begin{align*}
	\mathit{Vol} \big( (U_t, \tau_t ) \cdot \Sigma_r \big) &= 4 \pi^2 r_1 r_2 \Big( 1 + \rho^2 \sum_{c, u} r_u^2 (\hat F_\rho^t)_{, c \bar c u \bar u}(0) \Big) + \mathcal O(\rho^4) \\
	&\hspace{-10ex}= 4 \pi^2 r_1 r_2 \Big( 1 +  \rho^2 \big( r_1^2 (\hat F_\rho^t) _{, 1 1 \bar 1 \bar 1}(0) +  (r_1^2 + r_2^2) (\hat F_\rho^t)_{, 1 2 \bar 1 \bar 2}(0) + r_2^2 (\hat F_\rho^t)_{, 2 2 \bar 2 \bar 2}(0) \big) \Big) + \mathcal O(\rho^4)
\end{align*}
after explicitly expanding the sums over $c$ and $u$.  Therefore the lowest-order term in the expansion of $\frac{\dif}{\dif t} \mathit{Vol} \big( (U_t, \tau_t ) \cdot \Sigma_r \big) \big|_{t=0}$ in $\rho$ is
\begin{equation}
	\label{eqn:excond}
	\left. \frac{\dif}{\dif t} \right|_{t=0}  4 \pi^2 r_1 r_2 \Big( r_1^2 ( \hat F_\rho^t)_{, 1 1 \bar 1 \bar 1}(0) +  (r_1^2 + r_2^2) (\hat F_\rho^t)_{, 1 2 \bar 1 \bar 2}(0) + r_2^2 ( \hat F_\rho^t)_{, 2 2 \bar 2 \bar 2}(0) \Big) \, .
\end{equation}
Using \eqref{eqn:cxcurvature}, the expression \eqref{eqn:excond} can be re-phrased in terms of the complex Ricci curvature of $M$ as 
\begin{equation*}
	\left. \frac{\dif}{\dif t} \right|_{t=0}  4 \pi^2 r_1 r_2 \Big(r_1^2 R^{\C}_{ 1\bar 1 } ((U_t, \tau_t) \cdot p) + r_2^2 R^{\C}_{2\bar 2} ((U_t, \tau_t) \cdot p) \Big) \, .
\end{equation*}

We now turn to the difference of symplectic forms term.  The expression $\omega - \mathring \omega$ has leading order $\rho^2$ and the leading order part is an antisymmetric 2-tensor whose coefficients are homogeneous quadratic polynomials in $z$ and $\bar z$.  Pulling this back to $\Sigma_r$ yields an expression whose leading order part is a homogeneous fourth degree polynomial in $\cos(\theta^s)$ and $\sin(\theta^s)$ for $s = 1, 2$.  Multiplying this by $f^{(i)}$ for $i = 1, 2, 3$ or $4$ produces a fifth degree polynomial in these quantities.  This always integrates to zero over the torus.  Note that it is not necessary to consider the integrals against $f^{(5)}$ or $f^{(6)}$ since $v^{(5)}_1 = v^{(6)}_1 = 0$.  Hence the magnitude of $v^{(i)} \int_{\Sigma_r} f^{(i)} \cdot ( \omega - \mathring \omega)$ is determined by the next-to-leading terms in the expansion of $\omega - \mathring \omega$.  These are all $\mathcal O(\rho^3)$.  Expression \eqref{eqn:finalest} follows.
\end{proof}

\subsection{The Proof of the Main Theorem}

In this section, we conclude the proof of the Main Theorem by showing that if the mapping ${\mathcal F}_r $ has a non-degenerate critical point $[\mathcal{U}_p]$ in $\mathbf{U}_2(M) / \mathit{Diag} $, then $\tilde \Sigma_r(\mathcal U_p)$ can be further perturbed into an exactly Hamiltonian stationary Lagrangian submanifold.  This will then complete the proof of the Main Theorem.

\begin{thm}
	\label{thm:cokerprob}
	Suppose $[\mathcal U_p]$ is a non-degenerate critical point of the functional $\mathcal F_r$ defined in the previous section.  If $\rho$ is sufficiently small, then there is $ \mathcal U_{p'}$ near $\mathcal U_p$ so that the submanifold $\tilde \Sigma_r(\mathcal U_{p'}) $ that was obtained via Theorem \ref{thm:projprob} from the torus $\Sigma_r ( \mathcal U_{p'})$ is a Hamiltonian stationary Lagrangian submanifold.  The distance between $\mathcal U_p$ and $ \mathcal U_{p'}$ is $\mathcal O(\rho^2)$.
\end{thm}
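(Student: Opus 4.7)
The plan is to execute Steps 2 and 3 of the outline in Section \ref{sec:functional}. After Theorem \ref{thm:projprob}, each frame $\mathcal U_p$ produces a smooth family of solutions $(u_\rho(\mathcal U_p), v_\rho(\mathcal U_p))$ of the projected equation, with smooth dependence on $\mathcal U_p$ arising from the contraction-mapping argument. The residual $\bPhi_\rho(u_\rho, v_\rho)$ then lies in the seven-dimensional cokernel $\mathcal K^\ast = \mathrm{span}_{\R}\{(0,1)\} \oplus \mathcal K_0^\ast$, and the goal is to adjust the frame so that the projection to $\mathcal K_0^\ast$ vanishes, and then invoke the divergence theorem to kill the remaining $(0,1)$ component.

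To eliminate the $\mathcal K_0^\ast$-component, I would apply the finite-dimensional implicit function theorem to the map $G_\rho$ defined in \eqref{eqn:cokerproj}. Source and target both have dimension six, as $\dim M + \dim U(2) - \dim \mathit{Diag} = 4 + 4 - 2 = 6$. The central input is Proposition \ref{prop:expansion}, which says that the renormalized map $\tilde G_\rho := (4\pi^2 r_1 r_2 \rho^2)^{-1} G_\rho$ satisfies
\begin{equation*}
\tilde G_\rho([\mathcal U]) = D\mathcal F_r([\mathcal U]) \cdot (\Xi^{(1)}, \ldots, \Xi^{(6)}) + \mathcal O(\rho)
\end{equation*}
uniformly with derivatives on a neighborhood of $[\mathcal U_p]$. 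Since $[\mathcal U_p]$ is a non-degenerate critical point of $\mathcal F_r$, the leading term vanishes at $[\mathcal U_p]$ and its differential is the Hessian of $\mathcal F_r$, which is an isomorphism of $T_{[\mathcal U_p]}(\mathbf{U}_2(M)/\mathit{Diag})$ onto $\R^6$ in the basis $\{\Xi^{(i)}\}$. The IFT then produces, for each sufficiently small $\rho$, a nearby class $[\mathcal U_{p'}]$ at which $\tilde G_\rho$, hence $G_\rho$, vanishes; any smooth lift yields the desired $\mathcal U_{p'} \in \mathbf{U}_2(M)$. The quantitative IFT estimate, combined with $\tilde G_\rho([\mathcal U_p]) = \mathcal O(\rho)$, gives a displacement of order $\rho$ in the scaled picture, which unscales to the $\mathcal O(\|r\|^2)$ bound claimed in the Main Theorem.

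Having fixed $\mathcal U_{p'}$, I would re-apply Theorem \ref{thm:projprob} on $\Sigma_r(\mathcal U_{p'})$ to obtain the associated $(u_\rho, v_\rho)$ with $\pi \circ \bPhi_\rho(u_\rho, v_\rho) = 0$. Since $G_\rho(\mathcal U_{p'}) = 0$, the residual is $L^2$-orthogonal to $\mathcal K_0^\ast$, yet by construction it lies in $\mathcal K^\ast$; therefore $\bPhi_\rho(u_\rho, v_\rho) = c \cdot (0,1)$ for some constant $c \in \R$. The first component gives $\mu_X^\ast \omega = 0$, so $\mu_X(\Sigma_r(\mathcal U_{p'}))$ is exactly Lagrangian. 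The second component says that $\nabla \cdot H(\mu_X(\Sigma_r(\mathcal U_{p'})))$ equals the constant $c$; integrating over the compact torus and using that a divergence integrates to zero forces $c \cdot \mathrm{Vol}(\mu_X(\Sigma_r(\mathcal U_{p'}))) = 0$, so $c = 0$. Thus $\bPhi_\rho(u_\rho, v_\rho) = (0,0)$ and the perturbed submanifold is Hamiltonian stationary Lagrangian; smoothness follows from elliptic regularity applied to this (elliptic) nonlinear system.

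The main obstacle is the bookkeeping in the implicit function theorem step: one must verify that the remainder in Proposition \ref{prop:expansion} is $\mathcal O(\rho)$ uniformly together with its first derivative in $\mathcal U_p$ on a neighborhood of the critical class (which reduces to the smooth $\mathcal U_p$-dependence of $(u_\rho, v_\rho)$ and of the various integrands in $G_\rho$), and that the six tangent directions $\Xi^{(i)}$ indeed descend to a basis of $T_{[\mathcal U_p]}(\mathbf{U}_2(M)/\mathit{Diag})$, which amounts to checking that translations and the off-diagonal $U(2)$-rotations generated by $K_1, K_2$ are complementary to the $\mathit{Diag}$-action. Once these routine verifications are in place, the argument collapses into the IFT plus divergence-theorem framework described above.
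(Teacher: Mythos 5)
Your proposal follows the paper's proof closely: after solving the projected problem, apply the finite-dimensional implicit function theorem to $G_\rho$ using the expansion from Proposition~\ref{prop:expansion} and the non-degenerate Hessian of $\mathcal F_r$ to locate a frame $\mathcal U_{p'}$ at which the $\mathcal K_0^\ast$-component of the residual vanishes, then observe that the remaining $\mathrm{span}\{(0,1)\}$-component is a constant which is killed by integrating a divergence over the closed torus. This is exactly the paper's Steps~2 and~3.

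The one point where your reasoning diverges from the paper is the distance estimate. You derive $\tilde G_\rho([\mathcal U_p]) = \mathcal O(\rho)$ from the $\mathcal O(\rho^3)$ remainder in Proposition~\ref{prop:expansion}, obtain an $\mathcal O(\rho)$ displacement in the scaled parameter space, and then ``unscale'' by a factor $\rho$ to reach $\mathcal O(\|r\|^2)$. That unscaling is valid for the four translation parameters (the coordinate change is $z \mapsto \rho z$), but the two $U(2)$-rotation parameters generated by $K_1, K_2$ are dimensionless and do not rescale, so an $\mathcal O(\rho)$ displacement there stays $\mathcal O(\rho)$ in the natural frame-bundle metric. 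The paper instead quotes an $\mathcal O(\rho^4)$ error in the proof of Theorem~\ref{thm:cokerprob} (as opposed to the $\mathcal O(\rho^3)$ stated in Proposition~\ref{prop:expansion}), which after dividing by $\rho^2$ gives an $\mathcal O(\rho^2)$ displacement directly, without any unscaling. So to recover the theorem's $\mathcal O(\rho^2)$ bound cleanly you would need the sharper $\mathcal O(\rho^4)$ remainder (i.e.\ a further look at the $\omega - \mathring\omega$ integral to see that the $\mathcal O(\rho^3)$ contribution actually vanishes), rather than the unscaling heuristic.
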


\begin{proof}
	We must to find $\mathcal U_p$ so that $G_\rho(\mathcal U_p)$ vanishes identically.  But the estimate of Proposition \ref{prop:expansion} says that 
$$G_\rho (\mathcal U_p) = 4 \pi^2 r_1 r_2 \,  \rho^2 D \mathcal F_r ([\mathcal U_p])\cdot (\Xi_{(1)}, \ldots, \Xi_{(6)}) + \mathcal O(\rho^4) \, .$$
Suppose now that $D \mathcal F_r ([\mathcal U_p]) = 0$ and $D^2 \mathcal F_r([\mathcal U_p])$ is non-degenerate.  Since the norm of the inverse of $D^2 \mathcal F_r([\mathcal U_p])$ must be bounded above by a constant independent of $\rho$, then the implicit function theorem for maps between finite-dimensional manifolds implies that it is possible to find a neighbouring $\mathcal U_{p'}$ so that $G_\rho( \mathcal U_{p'}) \equiv 0$ provided $\rho$ is sufficiently small.  Furthermore the distance between $ \mathcal U_{p}$ and $\mathcal U_{p'}$ as points in $\mathbf{U}_2 (M)$ is $\mathcal O(\rho^2)$, which is a consequence of the fact that the error term in the equation $G_\rho(\mathcal U_p) = 0$ is $\mathcal O(\rho^4)$.  As indicated above, this now implies that $\nabla \cdot H \big( \tilde \Sigma_r  \big)$ is constant.  Then the divergence theorem implies that it must vanish.  
\end{proof}

\small 
\bibliography{tori}
\bibliographystyle{amsplain}

\end{document}